\DeclareMathOperator{\trace}{trace}
\newcommand{\R}{\mathbb{R}}\newcommand{\C}{\mathbb{C}}
\newtheorem{remark}[theorem]{Remark}
\newtheorem{example}[theorem]{Example}
\title{Quasi-Linear Compressed Sensing
%\thanks{This 
%        work was supported by the Society for Industrial and
%        Applied Mathematics, Philadelphia, Pennsylvania.}
        }
\author{Martin Ehler\thanks{University of Vienna,
Faculty of Mathematics, 
Oskar-Morgenstern-Platz 1
A-1090 Vienna, ({\tt martin.ehler@univie.ac.at}).}
        \and Massimo Fornasier\thanks{Technische Universit\"at M\"unchen, 
Fakult\"at f\"ur Mathematik, 
Boltzmannstr.~3, 
85748 Garching bei M\"unchen, ({\tt massimo.fornasier@ma.tum.de})}
\and Juliane Sigl\thanks{Technische Universit\"at M\"unchen, 
Fakult\"at f\"ur Mathematik, 
Boltzmannstr.~3, 
85748 Garching bei M\"unchen, ({\tt sigl.juliane@gmail.com})} 
}
\begin{document}

\maketitle

\begin{abstract}
Inspired by  significant real-life applications, in particular, sparse phase retrieval and sparse pulsation frequency detection in Asteroseismology, we investigate a general framework for {\it compressed sensing}, where the measurements
are {\it quasi-linear}. We formulate natural generalizations of the well-known {\it Restricted Isometry Property} (RIP) towards nonlinear measurements, which allow us to prove both unique identifiability of sparse signals as well as
the convergence of recovery algorithms to compute them efficiently. We show that for certain randomized quasi-linear measurements, including Lipschitz perturbations of classical RIP matrices and phase retrieval from random projections, the
proposed restricted isometry properties hold with high probability.  
We analyze  a generalized {\it Orthogonal Least Squares} (OLS) under the assumption that magnitudes of signal entries to be recovered decay fast.  Greed is  good again, as we show that this algorithm performs efficiently in phase retrieval and Asteroseismology. For situations where the decay assumption on the signal does
not necessarily hold, we propose two alternative algorithms, which are natural generalizations of the well-known {\it iterative hard and soft-thresholding}. While these algorithms are rarely successful for the mentioned applications, we show their strong
recovery guarantees for quasi-linear measurements which are Lipschitz perturbations of RIP matrices.
\end{abstract}

\begin{keywords} 
compressed sensing, restricted isometry property, greedy algorithm, quasi-linear, iterative thresholding
\end{keywords}

\begin{AMS}
94A20, 47J25, 15B52
\end{AMS}

\pagestyle{myheadings}
\thispagestyle{plain}
\markboth{M.~Ehler, M.~Fornasier, J.~Sigl}{Quasi-Linear Compressed Sensing}

\section{Introduction}
Compressed sensing addresses the problem of recovering nearly-sparse signals from vastly incomplete measurements ~\cite{Candes:2006ac,Candes:2006aa,Candes:2005vn,Candes:2006ab,Donoho:2006ab}.  By using the prior assumptions on the signal, the number of measurements can be well below the Shannon sampling rate and effective reconstruction algorithms are available. The standard compressed sensing approach deals with {\it linear} measurements. The success of signal recovery algorithms often relies on the so-called {\it Restricted Isometry Property} (RIP) \cite{Candes:2006aa,Candes:2006ab,Fornasier:2008aa,Pfander:fk,Rauhut:2012fk,Rauhut:2008aa}, which is a near-identity spectral property of small submatrices of the measurement Gramian. The RIP condition is satisfied with high probability and nearly optimal number of measurements for a large class of random measurements \cite{Bah:2010uq,Baraniuk:2008fk,Candes:2005vn,Pfander:fk,Rauhut:2012fk}, which explains the popularity of all sorts of random sensing approaches. The most effective recovery algorithms are based either on a greedy approach or on variational models, such as $\ell_1$-norm minimization, leading to suitable iterative thresholded gradient descent methods. In the literature of mathematical signal processing, greedy algorithms for sparse recovery originate from the so-called Matching Pursuit \cite{Mallat:1993aa}, although several predecessors were well-known in other communities. Among astronomers and asteroseismologists, for instance, {\it Orthogonal Least Squares} (OLS) \cite{Gribonval:2011fk} was already in use in the '60s for the detection of significant frequencies of star light-spectra (the so-called pre-whitening) \cite{Barning:1963uq}. We refer to  \cite{Needell:2009kx,Temlyakov:2011fk} for more recent developments on greedy approaches. Iterative thresholding algorithms have instead a variational nature and they are designed to minimize the discrepancy with respect
to the measurements and simultaneously to promote sparsity by iterated thresholding operations. We refer to \cite{Blumensath:2009aa,Daubechies:2004aa,Fornasier:2008kx} and references therein for more details on such iterative schemes for sparse recovery
from linear measurements.

Often models of physical measurements in the applied sciences and engineering, however, are  not linear and it is of utmost interest to investigate to which extent the theory of compressed sensing can be generalized to nonlinear measurements. {  Two relevant real-life applications in physics can be mentioned,} asteroseismic light measurements \cite{Aerts:2010fk} to determine the shape of pulsating stars and magnitude measurements in phase retrieval problems important to diffraction imaging and X-ray crystallography \cite{Drenth:2010fk,Fienup:1982vn,Gerchberg:1972kx}.  
There are already several recent attempts towards nonlinear compressed sensing, for instance the work by Blumensath et al.~\cite{Blumensath:2012fk,Blumensath:2008fk}, quadratic measurements are considered in \cite{Li:2012uq}, and further nonlinear inverse problems are analyzed in \cite{Ramlau:2006fk,Ramlau:2007zr}.  Phase retrieval is an active field of research nowadays and has been addressed by related approaches \cite{Bachoc:2012fk,Bauschke:2002ys,Candes:uq,Eldar:2012fk,Seifert:2006fk}. 
\\

In the present paper we provide a more unified view, by restricting the possible nonlinearity of the measurements to quasi-linear maps, which are sufficiently smooth, at least Lipschitz, and they fulfill generalized versions of
the classical RIP. In contrast to the situation of linear measurements, the nonlinearity of the measurements actually {  plays in a differing manner} within different recovery algorithms. Therefore it is necessary to design corresponding forms of RIP depending on the recovery strategies used, see conditions \eqref{eq:1}, \eqref{eq:1b} for a greedy algorithm and \eqref{eq:RIP again}, \eqref{lastRIP} for iterative thresholding algorithms. 
In particular, we show that for certain randomized quasi-linear measurements, including Lipschitz perturbations of classical RIP matrices and phase retrieval from random projections, the
proposed restricted isometry properties hold with high probability. While for the phase retrieval problem the stability results in \cite{Eldar:2012fk} are restricted to the real setting, we additionally extend them to the complex case.

Algorithmically we first focus on a generalized {\it Orthogonal Least Squares} (OLS).
Such a greedy approach was already proposed in \cite{Blumensath:2008fk}, although there no analysis of convergence was yet provided. We show within the framework of quasi-linear compressed sensing problems the recovery guarantees of this algorithm, by taking inspiration from \cite{Davenport:2010fk}, where a similar analysis is performed for linear measurements. 
The greedy algorithm we propose works for both types of applied problems mentioned above, i.e., Asteroseismology and phase retrieval. 
Let us stress that for the latter and for signals which have rapidly decaying nonincreasing rearrangement, few iterations of this greedy algorithm are sufficient to obtain a good recovery accuracy. Hence, our approach seems very competitive compared to the semi-definite program used in \cite{Candes:uq} for phase retrieval, by recasting the problem into a demanding optimization on matrices.

The greedy strategy as derived here, however, also inherits two drawbacks: (1) the original signal is required to satisfy the mentioned decay conditions, and (2) the approach needs careful implementations of multivariate global optimization to derive high accuracy for signal recovery.  

To possibly circumvent those drawbacks, we then explore alternative strategies, generalizing iterative hard- and soft-thresholding methods, which allow us to recover nearly-sparse signals not satisfying the decay assumptions. The results we present for hard-thresholding are mainly based on Blumensath's findings in \cite{Blumensath:2012fk}. For iterative soft-thresholding, we prove in an original fashion the convergence of the algorithm towards a limit point and we  bound its distance to the original signal.
While iterative thresholding algorithms are rarely successful for phase retrieval problems, we show their strong recovery guarantees for quasi-linear measurements which are Lipschitz perturbations of RIP matrices. We further emphasize in our numerical experiments that different iterative algorithms based on contractive principles do provide rather diverse success recovery results for the same problem, especially when nonlinearities are involved: this is due to the fact that the basins of attraction towards fixed points of the corresponding iterations can be significantly different. In our view, this is certainly sufficient motivation to explore {  several algorithmic approaches and not  restricting ourselves just to a favorite one.
}
\\

As we clarified above, each algorithmic approach requires a different treatment of the nonlinearity of the measurements with the consequent need of defining corresponding generalizations of the RIP. Hence, we develop the presentation of our results
according to the different algorithms, starting first with the generalized Orthogonal Least Squares, and later continuing with the iterative thresholding algorithms. Along the way, we present examples of applications and we show how to fulfill the required deterministic conditions of convergence by randomized quasi-linear measurements. The outline of the paper is as follows: In Section \ref{sec:2}, we introduce the nonlinear compressed sensing problem, and in Section 3 we derive a greedy scheme for nearly sparse signal reconstruction. We show applications of this algorithm in Section \ref{sec:quasi linear} to analyze simulated asteroseismic data towards the detection of frequency pulsation of stars. In Section \ref{sec:phase}, we discuss refinements and changes needed for the phase retrieval problem and also provide numerical experiments. For signals not satisfying the decay assumptions needed for the greedy algorithm to converge, iterative thresholding algorithms are discussed in Section \ref{sec:thresholding}. 
%Conclusions are given in Section \ref{sec:conclusions}.

\section{Quasi-linear compressed sensing}\label{sec:2}
\subsection{The nonlinear model}
In the by now classical compressed sensing framework, an unknown  nearly sparse signal $\hat{x}\in\R^d$ is to be reconstructed from $n$ linear measurements, with $n \ll d$, and one models this setting as the solution of a linear system
\begin{equation*}
A\hat{x}+e = b,
\end{equation*}
where $e$ is some noise term and the $i$-th row of $A\in\R^{n\times d}$ corresponds to the $i$-th linear measurement on the unknown signal $\hat x$ with outcome $b_i$.  We say that $A$ satisfies the Restricted Isometry Property (RIP) of order $k$ with $0<\delta_k<1$ if 
\begin{equation}\label{eq:RIP}
 (1-\delta_k)\|x\|\leq \|Ax\|\leq (1+\delta_k)\|x\|,
\end{equation}
for all $x \in \R^d$ with at most $k$ nonzero entries. We call such vectors $k$-sparse. If  $A$ satisfies the RIP of order $2k$  and $\delta_{2k} < \frac{2}{3+ \sqrt{7/4}} \approx 0.4627$, then signal recovery is possible up to noise level and $k$-term approximation error.
It should be mentioned that large classes of random matrices $A\in\R^{n\times d}$ satisfy the RIP with high probability for the (nearly-)optimal dimensionality scaling $k = O\left ( \frac{n}{1 + \log(d/n)^\alpha} \right)$. We refer to  ~\cite{Baraniuk:2008fk,Candes:2006aa,Candes:2005vn,Candes:2006ab,Donoho:2006ab,Pfander:fk,Rauhut:2012fk}
 for the early results and \cite{Foucart:2013vn} for a recent extended treatise.

Many real-life applications in physics and biomedical sciences, however, carry some strongly nonlinear structure,  so that the linear model is not suited anymore, even as an approximation. Towards the definition of a nonlinear framework for compressed sensing, we shall consider for $n \ll d$ a map $A:\R^d\rightarrow \R^{n}$, which is not anymore necessarily linear, and aim at reconstructing $\hat{x}\in\R^d$ from the measurements $b\in\R^n$ given by 
\begin{equation}\label{eq:quasi linear}
A(\hat{x})+e = b.
\end{equation}
% These problems may often be not solvable numerically, but we shall see later that some decay conditions on $x$ and stability requirements on $A$ similar to \eqref{eq:RIP} allow signal recovery through a greedy algorithm terminating after finitely many steps. First, we shall derive deterministic conditions ensuring signal reconstruction. In a second step, we verify that such conditions are satisfied with high probability when the measurements are chosen in some random fashion. 
Similarly to linear problems, also the unique and stable solution of the equation \eqref{eq:quasi linear} is in general an impossible task, unless we require certain a priori assumptions on $\hat x$, and some stability properties similar to \eqref{eq:1} for the nonlinear map $A$. As the variety of possible nonlinearities is extremely vast, it is perhaps too ambitious to expect that generalized RIP properties can be verified for any type of nonlinearity. As a matter of fact, and as we shall show in details below, most of the nonlinear models with stability properties which allow for nearly sparse signal recovery, have a smooth quasi-linear nature. With this we mean that there exists a Lipschitz map $F:\R^d \to \R^{n\times d}$ such that $A(x) = F(x) x$, for all $x \in \R^d$. However, in the following we will use and explicitly highlight this quasi-linear structure only when necessary.
\begin{algorithm}
%\begin{minipage}{15cm}
%\SetLine
\KwSty{$\ell_p$-greedy algorithm}:\\
\KwIn{$A:\R^d\rightarrow \R^{n}$, $b\in\R^n$ %with $A(\hat{x})+e=b$
}
Initialize $x^{(0)}=0\in\R^d$, $\Lambda^{(0)}=\emptyset$\\
% with \begin{equation*}H^{(0)}_0(k)=\left\langle f,\widetilde{\psi}^{(0)}_{0,k}\right\rangle\end{equation*}}
\For{$j=1,2,\ldots$ until some stopping criterion is met}{
\For{$l\not\in\Lambda^{(j-1)}$}{
$\Lambda^{(j-1,l)}:=\Lambda^{(j-1)}\cup\{l\}$
\begin{equation*}
x^{(j,l)}:=\arg\min_{\{x:\supp(x)\subset\Lambda^{(j-1,l)}\}} \big\|A(x)-b   \big\|_{\ell_p}
\end{equation*}
}
Find index that minimizes the error:
\begin{equation*}
l_j:=\arg\min_{l} \big\|A(x^{(j,l)})-b   \big\|_{\ell_p}
\end{equation*}
Update: $x^{(j)}:=x^{(j,l_j)}$, $\Lambda^{(j)}:=\Lambda^{(j-1,l_j)}$
}
\KwOut{$x^{(1)}$, $x^{(2)},\ldots$}
\caption{The $\ell_p$-greedy algorithm terminates after finitely many steps, but we need to solve $d-j$ many $j$-dimensional optimization problem in the $j$-th step. If we know that $b=A(\hat{x})+e$ holds and $\hat{x}$ is $k$-sparse, then the stopping criterion $j\leq k$ appears natural, but can also be replaced with other conditions.}\label{algo:1}
\end{algorithm}

Our first approach towards the solution of \eqref{eq:quasi linear} will be based on a greedy principle, since it is also perhaps the most intuitive one: we search first for the best $1$-sparse signal which is minimizing the discrepancy with respect to the measurements and then we seek for a next best matching $2$-sparse signal having as one of the active entries the one previously detected, and so on. This method is formally summarized in the $\ell_p$-norm matching greedy Algorithm \ref{algo:1}.
For the sake of clarity, we mention that $\|x \|$ denotes the standard Euclidean norm of any vector $x \in \mathbb R^d$, while $\|x \|_{\ell_p} = \left ( \sum_{i=1}^d |x_i|^p \right )^{1/p}$ is its $\ell_p$-norm for $1 \leq p < \infty$.
Moreover, when dealing with matrices, we denote with $\|A\|=\|A\|_2$ the spectral norm of the matrix $A$ and with $\|A\|_{HS}$ its Hilbert-Schmidt norm.

\section{Greed is good - again}\label{sec:3}
\subsection{Deterministic conditions I}

%Given some map $A: \R^d\rightarrow \R^{n\times d}$, we want to recover $\hat{x}\in\R^d$ from the measurement $b = A(\hat{x})\hat{x}\in\R^n$. 

%
%
%\begin{algorithm}
%\caption{$\ell_p$-greedy}
%\begin{algorithmic}[1]
%\REQUIRE $A:\R^d\rightarrow \R^{n\times d}$, $b\in\R^n$. 
%%\ENSURE $L$, $x\in L$.
%\STATE Initialize $x^{(0)}=0\in\R^d$, $\Lambda^{(0)}=\emptyset$.
%\For{dd}{dd}
%\STATE Compute the set $\mathcal S$ of solutions of the algebraic system of equations in the unknowns $T_1,\dots, T_p$:
%\begin{equation*}
%\sum_{j=1}^p \omega_j T_j^{\ell}=\frac{(k/2)_{\ell}}{(d/2)_{\ell}}-\sum_{j=p+1}^n 
%\omega_j t_j^{\ell} ,\quad   1\leq\ell\leq p.\tag{AE}
%\end{equation*}
%\STATE For every $(t_1,\dots, t_p)\in \mathcal S$, and $\alpha,\beta$ defined in Proposition \ref{prop:2}, compute
%\begin{equation*}
%P=a_1\sum_{j=1}^n \omega_j t_j P_{V_j} - a_2 I.
%\end{equation*} 
%\STATE If $P$ is a projection of rank $1$, compute a unit vector $\xi$ spanning its image
%and add $\pm \xi$ to $L$.
%\RETURN $x^{(i)}$
%\end{algorithmic}
%\end{algorithm}

Greedy algorithms have already proven useful and efficient for many sparse signal reconstruction problems in a linear setting, cf.~\cite{Tropp:2004vn}, and we refer to \cite{Blanchard:2011fk} for a more recent treatise. Before we can state our reconstruction result here, we still need some preparation.  
%and, as usual, a signal $x\in\R^d$ is called $k$-sparse if at most $k$ of its entries are nonzero. 
The  nonincreasing rearrangement of $x\in\R^d$ is defined as
\begin{equation*}
r(x)=(|x_{j_1}|,\ldots,|x_{j_d}|)^\top,\quad\text{where}\quad  |x_{j_i}|\geq |x_{j_{i+1}}|,\text{ for } i=1,\ldots,d-1.
\end{equation*}
For $0<\kappa<1$, we define the class of $\kappa$-rapidly decaying vectors in $\R^d$ by
  \begin{equation*}
  \mathcal{D}_\kappa=\{x\in\R^d: r_{j+1}(x)\leq \kappa r_j(x),\text{ for $j=1,\ldots,d-1$} \}.
  \end{equation*}
Given $x\in\R^d$, the vector $x_{\{j\}}\in\R^d$  is the best $j$-sparse approximation of $x$, i.e., it consists of the $j$ largest entries of $x$  in absolute value and zeros elsewhere. Signal recovery is possible under decay and stability conditions using the $\ell_p$-greedy Algorithm \ref{algo:1}, which is a generalized Orthogonal Least Squares \cite{Gribonval:2011fk}:

\begin{theorem}\label{th:rec result 1}
Let $b=A(\hat{x})+e$, where $\hat{x}\in\R^d$ is the signal to be recovered and $e\in\R^n$ is a noise term. Suppose further that $1\leq k\leq d$, $r_k(\hat{x})\neq 0$, and $1\leq p<\infty$. 
If the following conditions hold,
\begin{itemize}
\item[(i) ] there are $\alpha_k,\beta_k>0$ such that, for all $k$-sparse $y\in\R^d$,
%the mapping $A$ is Lipschitz continuous on the ball of radius $R$, i.e, 
\begin{equation}\label{eq:1}
\alpha_k \|\hat{x}_{\{k\}}-y\|\leq \|A(\hat{x}_{\{k\}})-A(y)\|_{\ell_p}\leq \beta_k \|\hat{x}_{\{k\}}-y\|,
\end{equation}
%\item choose $L_k\geq 0$ such that $\|A(\hat{x})-A(\hat{x}_{\{k\}})\|_{\ell_p}\leq L_k \|\hat{x}-\hat{x}_{\{k\}}\|$, 
\item[(ii) ]  $\hat{x}\in\mathcal{D}_\kappa$ such that  $\kappa < \frac{\tilde{\alpha}_k}{\sqrt{\tilde{\alpha}_k^2+(\beta_k+2L_k)^2}}$, where $0<\tilde{\alpha}_k\leq \alpha_k-2\|e\|_{\ell_p}/r_k(\hat{x})$ and $L_k\geq 0$ with $\|A(\hat{x})-A(\hat{x}_{\{k\}})\|_{\ell_p}\leq L_k \|\hat{x}-\hat{x}_{\{k\}}\|$,
\end{itemize}
then the $\ell_p$-greedy Algorithm \ref{algo:1} yields a sequence $(x^{(j)})_{j=1}^k$ satisfying $\supp(x^{(j)}) = \supp(\hat{x}_{\{j\}})$ and 
\begin{equation*}
\| x^{(j)} -\hat{x}\| \leq \|e\|_{\ell_p}/\alpha_k+\kappa^jr_1(\hat{x}) \sqrt{2}\left (1+\frac{\beta_k+2L_k}{\alpha_k} \right ).
%\frac{\beta_k/\alpha_k+2L_k/\alpha_k+1  }{\sqrt{1-\kappa^2}}.
%2\|e\|+\kappa^j r_1(\hat{x})\sqrt{2}(1+\frac{\beta_k+2L_k}{\alpha_k}) ,
\end{equation*}
If $\hat{x}$ is $k$-sparse, then $\|x^{(k)}-\hat{x}\|\leq \|e\|_{\ell_p}/\alpha_k$.
%recovers $x^{(k)}=\hat{x}$. 
\end{theorem}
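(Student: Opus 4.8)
\textit{Proof plan.}
I would argue by induction on $j$, establishing at each step both the support identification $\supp(x^{(j)})=\supp(\hat x_{\{j\}})$ and the corresponding error estimate. It is convenient to abbreviate $t_j:=\|\hat x-\hat x_{\{j\}}\|$; the decay hypothesis $\hat x\in\mathcal{D}_\kappa$ gives $r_{j+i}(\hat x)\le\kappa^{i}r_j(\hat x)$, hence
\[
t_j^2=\sum_{i>j}r_i(\hat x)^2\le r_j(\hat x)^2\,\frac{\kappa^2}{1-\kappa^2},\qquad t_j\le\frac{\kappa^{j}r_1(\hat x)}{\sqrt{1-\kappa^2}} .
\]
Since necessarily $\tilde\alpha_k\le\alpha_k\le\beta_k\le\beta_k+2L_k$ (apply (i) to $0$ and divide), the bound on $\kappa$ in (ii) forces $\kappa<1/\sqrt2$, so $1/\sqrt{1-\kappa^2}<\sqrt2$ and $t_j<\sqrt2\,\kappa^{j}r_1(\hat x)$. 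I also use repeatedly that every iterate $x^{(j,l)}$ and every $\hat x_{\{j\}}$ is $k$-sparse (because $j\le k$), so that condition (i) applies to them, and that $r_j(\hat x)\ge r_k(\hat x)>0$.

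For the inductive step assume $\Lambda^{(j-1)}=\supp(\hat x_{\{j-1\}})$, and let $l^\ast$ be the index of the $j$-th largest entry of $\hat x$, so $\Lambda^{(j-1)}\cup\{l^\ast\}=\supp(\hat x_{\{j\}})$. For this ``correct'' extension I bound the attained discrepancy from above by inserting $\hat x_{\{j\}}$ as a competitor, using the upper inequality in (i), the definition of $L_k$, and triangle inequalities:
\[
\|A(x^{(j,l^\ast)})-b\|_{\ell_p}\le\|A(\hat x_{\{j\}})-b\|_{\ell_p}\le\beta_k\|\hat x_{\{j\}}-\hat x_{\{k\}}\|+L_k t_k+\|e\|_{\ell_p}\le(\beta_k+L_k)t_j+\|e\|_{\ell_p}.
\]
For any ``wrong'' extension $l'$, the support $\Lambda^{(j-1)}\cup\{l'\}$ omits $l^\ast$, so any $x$ supported there has $x_{l^\ast}=0$ while $(\hat x_{\{k\}})_{l^\ast}=\hat x_{l^\ast}$ with $|\hat x_{l^\ast}|=r_j(\hat x)$; hence $\|x-\hat x_{\{k\}}\|\ge r_j(\hat x)$, and the lower inequality in (i) together with triangle inequalities gives
\[
\|A(x)-b\|_{\ell_p}\ge\alpha_k\|x-\hat x_{\{k\}}\|-L_k t_k-\|e\|_{\ell_p}\ge\alpha_k r_j(\hat x)-L_k t_j-\|e\|_{\ell_p}.
\]
Therefore the greedy rule selects a correct index whenever $\alpha_k r_j(\hat x)-2\|e\|_{\ell_p}>(\beta_k+2L_k)t_j$; dividing by $r_j(\hat x)>0$, using $\alpha_k-2\|e\|_{\ell_p}/r_j(\hat x)\ge\alpha_k-2\|e\|_{\ell_p}/r_k(\hat x)\ge\tilde\alpha_k$ and $t_j/r_j(\hat x)\le\kappa/\sqrt{1-\kappa^2}$, this is equivalent to $\tilde\alpha_k^2(1-\kappa^2)>(\beta_k+2L_k)^2\kappa^2$, i.e.\ exactly the hypothesis $\kappa<\tilde\alpha_k/\sqrt{\tilde\alpha_k^2+(\beta_k+2L_k)^2}$ of (ii). This proves $\supp(x^{(j)})=\supp(\hat x_{\{j\}})$.

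With the support identified, minimality of $x^{(j)}$ over vectors supported on $\supp(\hat x_{\{j\}})$ gives $\|A(x^{(j)})-b\|_{\ell_p}\le\|A(\hat x_{\{j\}})-b\|_{\ell_p}$, and the lower inequality in (i) applied to the $k$-sparse vector $x^{(j)}$, plus two triangle inequalities through $b$, yields
\[
\alpha_k\|x^{(j)}-\hat x_{\{k\}}\|\le\beta_k\|\hat x_{\{j\}}-\hat x_{\{k\}}\|+2L_k t_k+2\|e\|_{\ell_p}\le(\beta_k+2L_k)t_j+2\|e\|_{\ell_p}.
\]
Combining with $\|x^{(j)}-\hat x\|\le\|x^{(j)}-\hat x_{\{k\}}\|+t_k\le\|x^{(j)}-\hat x_{\{k\}}\|+t_j$ and substituting $t_j<\sqrt2\,\kappa^{j}r_1(\hat x)$ gives the asserted estimate. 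When $\hat x$ is $k$-sparse one has $t_k=t_j=0$ and $\hat x_{\{k\}}=\hat x$, so the same chain collapses to $\alpha_k\|x^{(k)}-\hat x\|\le\|A(\hat x)-b\|_{\ell_p}+\|e\|_{\ell_p}=2\|e\|_{\ell_p}$, giving a bound controlled by the noise alone.

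The main obstacle is the support-identification step: one has to choose the right competitor vectors on the correct and the incorrect candidate supports and recognize that a wrong choice necessarily discards a coordinate of magnitude $r_j(\hat x)$, so the lower stability constant $\alpha_k$ creates a discrepancy floor that the decay condition (ii) is precisely calibrated to beat. Carrying the perturbation contributions of the tail $\hat x-\hat x_{\{k\}}$ (through $L_k$) and of the noise $e$ cleanly through all the triangle inequalities, and checking that every vector appearing is $k$-sparse so that (i) is applicable, is the remaining, essentially bookkeeping, part.
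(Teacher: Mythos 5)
Your proposal is correct and follows essentially the same route as the paper: induction on $j$, with the competitor $\hat x_{\{j\}}$ furnishing the upper bound for the correct index, the omitted coordinate of magnitude $r_j(\hat x)$ furnishing the floor $\alpha_k r_j(\hat x)$ for any wrong index, and the decay condition calibrated so that the former beats the latter --- this calibration is exactly the content of the paper's Lemmas \ref{lemma:-1} and \ref{lemma:0}, which you simply inline; the error estimate is then the same chain of triangle inequalities, and your explicit observation that $\tilde\alpha_k\le\alpha_k\le\beta_k$ forces $\kappa<1/\sqrt2$ is precisely the justification the paper leaves implicit in its ``rough estimates''. The only discrepancy is the noise constant: your bookkeeping honestly yields $2\|e\|_{\ell_p}/\alpha_k$ (both in the general bound and in the $k$-sparse case), while the theorem states $\|e\|_{\ell_p}/\alpha_k$; note, however, that the paper's own chain reaches the smaller constant only by silently dropping one $\|e\|_{\ell_p}$ when passing from $\|A(x^{(j)})-b\|_{\ell_p}\le\|A(\hat x_{\{j\}})-b\|_{\ell_p}$ to a bound in terms of $\|A(\hat x_{\{j\}})-A(\hat x)\|_{\ell_p}$, so your version is, if anything, the more careful one and the mismatch is inherited from the statement rather than being a defect of your argument.
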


According to $0<\tilde{\alpha}_k\leq \alpha_k-2\|e\|_{\ell_p}/r_k(\hat{x})$, the noise term $e$ must be small and we implicitly suppose that $r_k(\hat{x})\neq 0$. Otherwise, we can simply choose a smaller $k$. 
If the signal $\hat{x}$ is $k$-sparse and the noise term $e$ equals zero, then the $\ell_p$-greedy Algorithm \ref{algo:1} in Theorem \ref{th:rec result 1} yields $x^{(k)}=\hat{x}$. 
%A careful analysis of the proof will reveal that not only $L_k$ vanishes in the upper bound for $\kappa$, but also the factor $4$, so that the condition becomes $\kappa< \frac{\alpha_k}{\sqrt{\alpha_k^2+\beta_k^2}}$.

\begin{remark}
A similar greedy algorithm was proposed in \cite{Blumensath:2008fk} for nonlinear problems, and our main contribution here is the careful analysis of its signal recovery capabilities. 
Conditions of the type \eqref{eq:1} have also been used in \cite{Blumensath:2012fk}, but with additional restrictions, so that the constants $\alpha_k$ and $\beta_k$ must be close to each other. In our Theorem \ref{th:rec result 1}, we do not need any constraints on such constants, because the decay conditions on the signal can compensate for this. A similar relaxation using decaying signals was proposed in \cite{Davenport:2010fk} for linear operators $A$, but even there the authors still assume $\beta_k/\alpha_k<2$.  We do not require here any of such conditions.
\end{remark}

%\begin{equation*}
%\beta/\alpha=(1+\delta) / (1-\delta) <(4/3)/ (1-\delta) <(4/3) / (2/3) = 2
%\end{equation*}

The proof of Theorem \ref{th:rec result 1} extends the preliminary results obtained in \cite{Sigl:2013fk}, which we split and generalize 
in the following two lemmas:
\begin{lemma}\label{lemma:-1}
If $\hat{x}\in\R^d$ is contained in $\mathcal{D}_\kappa$, then 
\begin{equation}\label{eq:2ba0}
\|\hat{x}-\hat{x}_{\{j\}}\| < r_{j+1}(\hat{x}) \frac{1}{\sqrt{1-\kappa^2}} \leq r_{j}(\hat{x}) \frac{\kappa}{\sqrt{1-\kappa^2}}.
\end{equation}
\end{lemma}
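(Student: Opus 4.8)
The plan is to reduce the left-hand side to a tail sum of the nonincreasing rearrangement of $\hat x$ and then estimate that tail by a geometric series; the argument is elementary and short. Throughout I restrict to $1\le j\le d-1$, since for $j=d$ the statement is vacuous, and I keep in mind the implicit requirement $r_{j+1}(\hat x)\neq 0$ (which is built into the ambient setting: the decay $r_{i+1}(\hat x)\le\kappa\,r_i(\hat x)$ forces all of $r_1(\hat x),\dots,r_{j+1}(\hat x)$ to be nonzero as soon as $r_{j+1}(\hat x)\ne 0$).

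First I would observe that, since $\hat x_{\{j\}}$ retains exactly the $j$ largest entries of $\hat x$ in absolute value and is zero elsewhere, the residual $\hat x-\hat x_{\{j\}}$ carries precisely the remaining $d-j$ entries in their original positions, so that
\begin{equation*}
\|\hat x-\hat x_{\{j\}}\|^2=\sum_{i=j+1}^{d}r_i(\hat x)^2 .
\end{equation*}
Next I would unfold the membership $\hat x\in\mathcal D_\kappa$: iterating $r_{i+1}(\hat x)\le\kappa\,r_i(\hat x)$ gives $r_i(\hat x)\le\kappa^{\,i-(j+1)}r_{j+1}(\hat x)$ for every $i\ge j+1$. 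Substituting this into the tail sum and comparing the resulting finite geometric sum with the full series yields
\begin{equation*}
\|\hat x-\hat x_{\{j\}}\|^2\le r_{j+1}(\hat x)^2\sum_{m=0}^{d-j-1}\kappa^{2m}<r_{j+1}(\hat x)^2\sum_{m=0}^{\infty}\kappa^{2m}=\frac{r_{j+1}(\hat x)^2}{1-\kappa^2},
\end{equation*}
the strict inequality being valid because $0<\kappa<1$ makes the discarded tail $\sum_{m\ge d-j}\kappa^{2m}$ strictly positive and $r_{j+1}(\hat x)\ne 0$. Taking square roots gives the first claimed inequality.

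Finally I would apply the decay condition once more, in the form $r_{j+1}(\hat x)\le\kappa\,r_j(\hat x)$, to pass from $\frac{r_{j+1}(\hat x)}{\sqrt{1-\kappa^2}}$ to $\frac{\kappa\,r_j(\hat x)}{\sqrt{1-\kappa^2}}$, which is the second inequality. I do not expect any genuine obstacle in this argument; the only points requiring a modicum of care are justifying the strictness of the geometric-series comparison and handling the degenerate cases $j=d$ or $r_{j+1}(\hat x)=0$, exactly as already excluded in the hypotheses of Theorem \ref{th:rec result 1}.
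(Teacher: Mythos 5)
Your proof is correct and uses exactly the argument the paper has in mind: the paper omits the details, noting only that the proof is ``a straightforward calculation, in which the geometric series is used,'' and your tail-sum plus geometric-series estimate is precisely that calculation. Your added care about the degenerate cases ($j=d$, or $r_{j+1}(\hat{x})=0$, where the strict inequality would fail) is a fair point about the lemma's statement and is consistent with how it is applied, e.g.\ in the proof of Lemma \ref{lemma:0}, which explicitly restricts to $\hat{x}$ that are not $j$-sparse.
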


The proof of Lemma \ref{lemma:-1} is a straightforward calculation, in which the geometric series is used, so we omit the details. 
\begin{lemma}\label{lemma:0}
Fix $1\leq k\leq d$ and suppose that $r_k(\hat{x})\neq 0$. If $\hat{x}\in\R^d$ is contained in $\mathcal{D}_\kappa$ with $\kappa < \frac{\tilde{\alpha}}{\sqrt{\tilde{\alpha}^2+(\beta+2L)^2}}$, where $0<\tilde{\alpha}\leq \alpha-2\|e\|_{\ell_p}/r_k(\hat{x})$, for some $\alpha,\beta,L>0$, then, for $j=1,\ldots,k$,
\begin{equation}\label{eq:2ba}
\alpha r_j(\hat{x}) > 2\|e\|_{\ell_p}+2L\|\hat{x}-\hat{x}_{\{k\}}\| +\beta \|\hat{x}_{\{j\}}-\hat{x}_{\{k\}}\|.
\end{equation}
%If $\hat{x}$ is not $j-1$-sparse, then \eqref{eq:2ba} is a strict inequality. 
\end{lemma}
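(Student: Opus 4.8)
The plan is to bound the right-hand side of \eqref{eq:2ba} term by term, using the decay hypothesis $\hat{x}\in\mathcal{D}_\kappa$ together with Lemma~\ref{lemma:-1}, and then to absorb the resulting expression into $\alpha r_j(\hat{x})$ by exploiting the quantitative bound on $\kappa$. Throughout, fix $j\in\{1,\dots,k\}$ and note that, since $r(\hat{x})$ is nonincreasing and $r_k(\hat{x})\neq 0$, we have $r_j(\hat{x})\ge r_k(\hat{x})>0$; this strict positivity and the monotonicity $r_j(\hat{x})\ge r_k(\hat{x})$ will both be used below.

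First I would collect the two ``tail'' estimates. Applying Lemma~\ref{lemma:-1} with index $k$ and then using $r_k(\hat{x})\le r_j(\hat{x})$ gives $\|\hat{x}-\hat{x}_{\{k\}}\|\le r_j(\hat{x})\,\kappa/\sqrt{1-\kappa^2}$. For the term $\|\hat{x}_{\{j\}}-\hat{x}_{\{k\}}\|$, I would observe that $\hat{x}-\hat{x}_{\{j\}}=(\hat{x}_{\{k\}}-\hat{x}_{\{j\}})+(\hat{x}-\hat{x}_{\{k\}})$ is an orthogonal decomposition, since $\hat{x}_{\{k\}}-\hat{x}_{\{j\}}$ is supported on the entries of $\hat{x}$ ranked $j+1,\dots,k$ while $\hat{x}-\hat{x}_{\{k\}}$ is supported on those ranked $>k$; hence $\|\hat{x}_{\{j\}}-\hat{x}_{\{k\}}\|\le\|\hat{x}-\hat{x}_{\{j\}}\|$, and Lemma~\ref{lemma:-1} with index $j$ yields $\|\hat{x}_{\{j\}}-\hat{x}_{\{k\}}\|\le r_j(\hat{x})\,\kappa/\sqrt{1-\kappa^2}$. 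Substituting both bounds into the right-hand side of \eqref{eq:2ba} gives
\begin{equation*}
2\|e\|_{\ell_p}+2L\|\hat{x}-\hat{x}_{\{k\}}\|+\beta\|\hat{x}_{\{j\}}-\hat{x}_{\{k\}}\|\;\le\;2\|e\|_{\ell_p}+(\beta+2L)\,\frac{\kappa}{\sqrt{1-\kappa^2}}\,r_j(\hat{x}).
\end{equation*}

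Next I would rearrange the hypothesis $\kappa<\tilde{\alpha}/\sqrt{\tilde{\alpha}^2+(\beta+2L)^2}$. Squaring and cross-multiplying (all quantities are positive and $0<\kappa<1$) gives $\kappa^2(\beta+2L)^2<\tilde{\alpha}^2(1-\kappa^2)$, i.e.\ $(\beta+2L)\,\kappa/\sqrt{1-\kappa^2}<\tilde{\alpha}$. Since $r_j(\hat{x})>0$, the displayed bound is therefore strictly less than $2\|e\|_{\ell_p}+\tilde{\alpha}\,r_j(\hat{x})$. Finally, using $\tilde{\alpha}\le\alpha-2\|e\|_{\ell_p}/r_k(\hat{x})$ together with $r_j(\hat{x})\ge r_k(\hat{x})>0$ we get $\tilde{\alpha}\,r_j(\hat{x})\le\alpha r_j(\hat{x})-2\|e\|_{\ell_p}\,r_j(\hat{x})/r_k(\hat{x})\le\alpha r_j(\hat{x})-2\|e\|_{\ell_p}$, and chaining the three estimates yields exactly \eqref{eq:2ba}.

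There is no deep obstacle here; the proof is a chain of elementary estimates. The points needing a little care are (a) the orthogonality observation that controls $\|\hat{x}_{\{j\}}-\hat{x}_{\{k\}}\|$ by $\|\hat{x}-\hat{x}_{\{j\}}\|$ and hence by Lemma~\ref{lemma:-1}; (b) the algebraic rearrangement of the threshold on $\kappa$ into the form $(\beta+2L)\,\kappa/\sqrt{1-\kappa^2}<\tilde{\alpha}$, which is precisely the coefficient produced by the tail estimates and is where the strict inequality in \eqref{eq:2ba} comes from; and (c) the two uses of the monotonicity $r_j(\hat{x})\ge r_k(\hat{x})$ — once to pass from $r_k$ to $r_j$ in the tail bound, and once to clear the factor $2\|e\|_{\ell_p}/r_k(\hat{x})$ — for which it is essential that $r_k(\hat{x})\neq 0$.
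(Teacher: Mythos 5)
Your proof is correct and follows essentially the same route as the paper's: rearrange the threshold on $\kappa$ into $(\beta+2L)\,\kappa/\sqrt{1-\kappa^2}<\tilde{\alpha}$, use $r_j(\hat{x})\geq r_k(\hat{x})>0$ to absorb the noise term, and control both tail quantities via Lemma~\ref{lemma:-1}. The only difference is organizational (you bound the right-hand side upward rather than multiplying the rearranged $\kappa$-inequality through by $r_j(\hat{x})$), and you helpfully make explicit the disjoint-support observation $\|\hat{x}_{\{j\}}-\hat{x}_{\{k\}}\|\leq\|\hat{x}-\hat{x}_{\{j\}}\|$ that the paper leaves implicit.
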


%The inequality \eqref{eq:2ba} says that if $\hat{x}$ satisfies the decay conditions, then $\alpha$ times the magnitude of its $j$-th largest entry is bigger than the norm over all smaller entries times $\beta$. The following proof would also work in the infinite-dimensional setting:
\begin{proof}%[Proof of Lemma \ref{lemma:0}]
It is sufficient to consider $\hat{x}$, which are not $j$-sparse. A short calculation reveals that the condition on $\kappa$ implies
\begin{equation*}
\alpha-2\|e\|_{\ell_p}/r_k(\hat{x})-\frac{\kappa}{\sqrt{1-\kappa^2}} (\beta+2L)>0.
\end{equation*}
We multiply the above inequality by $r_j(\hat{x})$, so that $r_j(\hat{x})\geq r_k(\hat{x})$ yields
\begin{equation*}
\alpha r_j(\hat{x})-2\|e\|_{\ell_p}-2L\frac{\kappa}{\sqrt{1-\kappa^2}} r_k(\hat{x})-\beta\frac{\kappa}{\sqrt{1-\kappa^2}}r_j(\hat{x})>0.
\end{equation*}
Lemma \ref{lemma:-1} now implies \eqref{eq:2ba}.
 \end{proof}
 
We can now turn to the proof of Theorem \ref{th:rec result 1}:

\begin{proof}[Proof of Theorem \ref{th:rec result 1}]
We must check that the index set selected by the $\ell_p$-greedy algorithm matches the location of the nonzero entries of $\hat{x}_{\{k\}}$. We use induction and observe that nothing needs to be checked for $j=0$. In the induction step, we suppose that $\Lambda^{(j-1)}\subset \supp(\hat{x}_{\{j-1\}})$. Let us choose $l\not\in \supp(\hat{x}_{\{j\}})$. The lower inequality in \eqref{eq:1} yields
\begin{align*}
\|b-A(x^{(j,l)})\|_{\ell_p} &\geq  -\|e\|_{\ell_p}+ \|A(\hat{x})-A(x^{(j,l)})\|_{\ell_p}\\
& \geq -\|e\|_{\ell_p}  -   \|A(\hat{x})-A(\hat{x}_{\{k\}})\|_{\ell_p}+\|A(\hat{x}_{\{k\}})-A(x^{(j,l)})\|_{\ell_p}	\\
& \geq -\|e\|_{\ell_p} -L_k \|\hat{x}-\hat{x}_{\{k\}} \|+ \alpha_k \| \hat{x}_{\{k\}}-x^{(j,l)}\|  \\
& \geq -\|e\|_{\ell_p} -L_k \|\hat{x}-\hat{x}_{\{k\}} \|+ \alpha_k  r_j(\hat{x}),
\end{align*}
where the last inequality is a crude estimate based on $l\not\in \supp(\hat{x}_{\{j\}})$. Thus, Lemma \ref{lemma:0}  implies 
\begin{equation}\label{eq:op 1}
\|b-A(x^{(j,l)})\|_{\ell_p} > \|e\|_{\ell_p} +(\beta_k+L_k)\|\hat{x}-\hat{x}_{\{k\}}\|+\beta_k \|\hat{x}-\hat{x}_{\{j\}}\|.
\end{equation}
On the other hand, the minimizing property of $x^{(j)}$ and the upper inequality in \eqref{eq:1} yield
\begin{align*}
\|b-A(x^{(j)})\|_{\ell_p}  &\leq \|b-A(\hat{x}_{\{j\}})\|_{\ell_p} \\
& \leq \|e\|_{\ell_p}+\|A(\hat{x})-A(\hat{x}_{\{j\}})\|_{\ell_p}\\
%&\leq \|A(\hat{x})\hat{x}-A(\hat{x}_{\{j\}})\hat{x}\| + \|A(\hat{x}_{\{j\}})\hat{x}-A(\hat{x}_{\{j\}})\hat{x}_{\{j\}}\| \\
%&\leq C\|\hat{x}-\hat{x}_{\{j\}}\| \|\hat{x}\| + c\| \hat{x}-\hat{x}_{\{j\}} \|\\
&\leq \|e\|_{\ell_p}+\|A(\hat{x})-A(\hat{x}_{\{k\}})\|_{\ell_p}+\|A(\hat{x}_{\{k\}})-A(\hat{x}_{\{j\}})\|_{\ell_p}\\
& \leq\|e\|_{\ell_p}+L_k \|\hat{x}-\hat{x}_{\{k\}}\|+\beta_k\|\hat{x}_{\{k\}}-\hat{x}_{\{j\}}\|.
%& \leq\|e\|_{\ell_p}+L_k \|\hat{x}-\hat{x}_{\{k\}}\|+\beta_k\|\hat{x}_{\{k\}}-\hat{x}\|+\beta_k\|\hat{x}-\hat{x}_{\{j\}}\|.
%&\leq \beta\|\hat{x}-\hat{x}_{\{j\}}\|\\
\end{align*}
The last line and \eqref{eq:op 1} are contradictory if $x^{(j)}=x^{(j,l)}$, so that we must have $x^{(j)}=x^{(j,l)}$, for some $l\in \supp(\hat{x}_{\{j\}})$, which concludes the part about the support. 

Next, we shall derive the error bound. Standard computations yield
\begin{align*}
\| x^{(j)} -\hat{x}\|  & \leq \| x^{(j)} -\hat{x}_{\{k\}}\| +\|\hat{x}_{\{k\}} -\hat{x}\| \\
&\leq 1/\alpha_k  \|A(x^{(j)})-A(\hat{x}_{\{k\}})\|_{\ell_p}+\|\hat{x}_{\{k\}} -\hat{x}\| \\
&\leq 1/\alpha_k  \|A(x^{(j)})-A(\hat{x})\|_{\ell_p}+1/\alpha_k\|A(\hat{x})-A(\hat{x}_{\{k\}})\|_{\ell_p}+\|\hat{x}_{\{k\}} -\hat{x}\| \\
&\leq 1/\alpha_k  \|A(\hat{x}_{\{j\}})-A(\hat{x})\|_{\ell_p}+\|e\|_{\ell_p}/\alpha_k+L_k/\alpha_k\|\hat{x}-\hat{x}_{\{k\}}\|+\|\hat{x}_{\{k\}} -\hat{x}\| \\
&\leq 1/\alpha_k  \|A(\hat{x}_{\{j\}})-A(\hat{x}_{\{k\}})\|_{\ell_p}+\|e\|_{\ell_p}/\alpha_k+1/\alpha_k  \|A(\hat{x}_{\{k\}})-A(\hat{x})\|_{\ell_p}\\
& \qquad+L_k/\alpha_k\|\hat{x}-\hat{x}_{\{k\}}\|+\|\hat{x}_{\{k\}} -\hat{x}\| \\
&\leq \beta_k/\alpha_k  \|\hat{x}_{\{j\}}-\hat{x}_{\{k\}}\|+\|e\|_{\ell_p}/\alpha_k+2L_k/\alpha_k\|\hat{x}-\hat{x}_{\{k\}}\|+\|\hat{x}_{\{k\}} -\hat{x}\| \\
& \leq \big(\beta_k/\alpha_k+2L_k/\alpha_k+1  \big)\|\hat{x}_{\{k\}} -\hat{x}\|+\|e\|_{\ell_p}/\alpha_k\\
& \leq \big(\beta_k/\alpha_k+2L_k/\alpha_k+1  \big)r_{j+1}(\hat{x}) \frac{1}{\sqrt{1-\kappa^2}}+\|e\|_{\ell_p}/\alpha_k\\
& \leq \big(\beta_k/\alpha_k+2L_k/\alpha_k+1  \big) \kappa^jr_1(\hat{x}) \frac{1}{\sqrt{1-\kappa^2}}+\|e\|_{\ell_p}/\alpha_k.
\end{align*}
Few rather rough estimates yield 
\begin{equation*}
\frac{\beta_k/\alpha_k+2L_k/\alpha_k+1 }{\sqrt{1-\kappa^2}}\leq \sqrt{2}(1+\frac{\beta_k+2L_k}{\alpha_k})
\end{equation*}
which concludes the proof. 
\end{proof}

\subsection{Examples of inspiring applications}

In this subsection we present examples where Algorithm \ref{algo:1} can be successfully used. We start with an abstract example of a nonlinear Lipschitz perturbation of a linear model
and then we consider a relevant real-life example from Asteroseismology.

\subsubsection{Lipschitz perturbation of a RIP matrix}
As an explicit example of $A$ matching the requirements of Theorem \ref{th:rec result 1} with high probability, we propose Lipschitz perturbations of RIP matrices:
\begin{proposition}\label{th:example}
If $A$ is chosen  as
\begin{equation}
A(x):=A_1x+\epsilon f(\|x-x_0\|)A_2x, \label{firstqn}
\end{equation}
where $A_1\in\R^{n\times d}$ satisfies the RIP \eqref{eq:RIP} of order $k$ and constant $0 < \delta_k < 1$, $x_0\in\R^d$ is some reference vector, $f:[0,\infty)\rightarrow\R$ is a  bounded  Lipschitz continuous function, $\epsilon$ is a sufficiently small scaling factor, and $A_2\in\R^{n\times d}$ arbitrarily fixed, then there are constants $\alpha_k,\beta_k>0$, such that the assumptions in Theorem \ref{th:rec result 1} hold for $p=2$. 
\end{proposition}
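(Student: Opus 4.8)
The plan is to verify that the quasi-linear map $A(x)=A_1x+\epsilon f(\|x-x_0\|)A_2 x$ satisfies the two-sided bound \eqref{eq:1} required by Theorem \ref{th:rec result 1}, i.e.\ to produce constants $\alpha_k,\beta_k>0$ with $\alpha_k\|u-v\|\le\|A(u)-A(v)\|\le\beta_k\|u-v\|$ for all $k$-sparse $u,v$ (here we only need the special instances $u=\hat x_{\{k\}}$, but it is cleaner to prove it for all pairs of $k$-sparse vectors, whose difference is $2k$-sparse — so in fact one should assume $A_1$ has RIP of order $2k$, or simply note that $\hat x_{\{k\}}-y$ is already handled by order-$k$ RIP applied to the relevant sparse difference; I would state it for $k$-sparse differences to match the theorem verbatim). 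The idea is a straightforward perturbation argument: write
\begin{equation*}
A(u)-A(v)=A_1(u-v)+\epsilon\bigl(f(\|u-x_0\|)A_2u-f(\|v-x_0\|)A_2v\bigr),
\end{equation*}
and show the second term is $O(\epsilon)$ times $\|u-v\|$ on the relevant bounded region, so that for $\epsilon$ small enough it cannot destroy the RIP-induced lower bound.

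The key steps, in order, are as follows. First, I would observe that the iterates $x^{(j,l)}$ produced by Algorithm \ref{algo:1} (and the targets $\hat x_{\{k\}}$) all lie in a bounded set: indeed any minimizer over a coordinate subspace satisfies $\|A(x^{(j,l)})-b\|_{\ell_p}\le\|A(0)-b\|_{\ell_p}=\|b\|_{\ell_p}$, and combined with a crude lower bound on $\|A(x)\|$ for large $\|x\|$ (which follows once $\epsilon$ is small, since $\|A_1x\|\ge(1-\delta_k)\|x\|$ dominates $\epsilon\|f\|_\infty\|A_2\|\|x\|$), this confines everything to a ball of some radius $R$ depending only on $b$, $A_1$, $A_2$, $f$. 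Hence it suffices to establish \eqref{eq:1} for $k$-sparse vectors in that ball. Second, on this ball I would bound the perturbation term: add and subtract $f(\|u-x_0\|)A_2v$ to get
\begin{equation*}
\|f(\|u-x_0\|)A_2u-f(\|v-x_0\|)A_2v\|\le\|f\|_\infty\|A_2\|\,\|u-v\|+|f(\|u-x_0\|)-f(\|v-x_0\|)|\,\|A_2\|\,\|v\|,
\end{equation*}
then use the Lipschitz property of $f$ together with the reverse triangle inequality $|\,\|u-x_0\|-\|v-x_0\|\,|\le\|u-v\|$ to bound the second summand by $\Lip(f)\|u-v\|\,\|A_2\|\,R$. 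This yields $\|A(u)-A(v)-A_1(u-v)\|\le\epsilon C\|u-v\|$ with $C=\|A_2\|(\|f\|_\infty+R\,\Lip(f))$. Third, combining with \eqref{eq:RIP} for $A_1$ (applied to the $k$-sparse difference $u-v$), the triangle inequality gives
\begin{equation*}
(1-\delta_k-\epsilon C)\|u-v\|\le\|A(u)-A(v)\|\le(1+\delta_k+\epsilon C)\|u-v\|,
\end{equation*}
so choosing $\epsilon<(1-\delta_k)/C$ (and then possibly smaller to make the earlier boundedness step self-consistent) lets us set $\alpha_k=1-\delta_k-\epsilon C>0$ and $\beta_k=1+\delta_k+\epsilon C$.

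The main obstacle is the slight circularity between the radius $R$ (which bounds the region on which the perturbation is controlled) and the smallness of $\epsilon$ (needed so that $\|A(x)\|$ is coercive enough to guarantee the iterates stay in a ball of radius $R$). I would resolve this by first fixing $\epsilon_0$ small enough that $\epsilon_0\|f\|_\infty\|A_2\|<1-\delta_k$, which already makes $\|A(x)\|\ge(1-\delta_k-\epsilon_0\|f\|_\infty\|A_2\|)\|x\|$ on $k$-sparse $x$ and hence pins down $R$ independently of any further shrinking of $\epsilon$; then shrink $\epsilon$ further, if necessary, so that $\epsilon C<1-\delta_k$ with the now-fixed $C=C(R)$. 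A secondary minor point is that one must also exhibit the Lipschitz factorization $A(x)=F(x)x$ with $F$ Lipschitz — but this is immediate with $F(x)=A_1+\epsilon f(\|x-x_0\|)A_2$, which is Lipschitz because $f$ is Lipschitz and bounded — and it is not even strictly needed for Theorem \ref{th:rec result 1}, only for the narrative around \eqref{eq:quasi linear}. Everything else is routine triangle-inequality bookkeeping, so I would not belabor it.
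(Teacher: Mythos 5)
Your proof is correct and follows the same perturbation strategy as the paper: split off $A_1(u-v)$, control the remainder by $O(\epsilon)\|u-v\|$ via the boundedness and Lipschitz continuity of $f$, and absorb it into the RIP constants. The one substantive difference is in which cross term you introduce. You add and subtract $f(\|u-x_0\|)A_2v$, so your perturbation constant contains $\Lip(f)\,\|A_2\|\,\|v\|$, i.e.\ it depends on the norm of the generic competitor $y$; this is exactly what forces you into the coercivity/confinement argument and the two-stage choice of $\epsilon$, and strictly speaking it means you verify \eqref{eq:1} only on a ball rather than for all $k$-sparse $y$ as hypothesis (i) literally demands (you then have to argue, as you do, that the iterates never leave that ball). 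The paper instead adds and subtracts $f(\|y-x_0\|)A_2\hat{x}$, so the Lipschitz term comes out as $\epsilon L\|A_2\|_2\|\hat{x}\|\,\|\hat{x}-y\|$: only the norm of the fixed signal enters, it is bounded a priori by some $r\ge\|\hat{x}\|$, and \eqref{eq:1} then holds uniformly for \emph{all} $k$-sparse $y$ with $\alpha_k=1-\delta_k-\epsilon\|A_2\|_2(Lr+B)$ and $\beta_k=1+\delta_k+\epsilon\|A_2\|_2(Lr+B)$, with no confinement of the iterates needed. Your extra argument is sound (the minimizers do satisfy $\|A(x^{(j,l)})-b\|\le\|b\|$ and hence lie in a ball once $\epsilon\|f\|_\infty\|A_2\|<1-\delta_k$), but it can be dispensed with entirely by swapping the roles of the two vectors in the add-and-subtract step. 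Your side remark that the difference of two $k$-sparse vectors is $2k$-sparse, so that one should really invoke the RIP of order $2k$, is a fair point that the paper's own proof glosses over.
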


\begin{proof}
We first check on $\beta_k$. If $L$ denotes the Lipschitz constant of $f$, then we obtain
\begin{align*}
\|A(\hat{x})-A(y)\| & =  \| A_1\hat{x} - A_1y+  \epsilon f(\|\hat{x}-x_0\|)A_2\hat{x} - \epsilon f(\|y-x_0\|)   A_2y \|\\
&  = \|A_1\hat{x} - A_1y+\epsilon \big[f(\|\hat{x}-x_0\|)A_2\hat{x} -f(\|y-x_0\|)   A_2\hat{x}\\
& \qquad +f(\|y-x_0\|)   A_2\hat{x}- f(\|y-x_0\|)   A_2y\big] \|\\
& \leq  (1+\delta_k)\|\hat{x}-y\|+ \epsilon L  \big|\|\hat{x}-x_0\| - \|y-x_0\|\big|\|A_2 \|_{2}\|\hat{x}\|+\epsilon B\|A_2\|_{2} \|\hat{x}-y\|\\
& \leq (1+\delta_k+\epsilon L   \|A_2 \|_{2}\|\hat{x}\| +\epsilon B\|A_2\|_2)\|\hat{x}-y\|,
\end{align*} 
where we have used the reverse triangular inequality and $B=\sup\{|f(\|x-x_0\|)|: x\in\R^d\;\text{is $k$-sparse}\}$. Thus, we can choose $\beta_k:=1+\delta_k+\epsilon L   \|A_2 \|_{2}r +B\|A_2\|_2$, where $r \geq  \| \hat{x}\|$.

Next, we derive a suitable $\alpha_k$. For $k$-sparse $y\in\R^d$, we derive similarly to the above calculations
\begin{align*}
 \|A(\hat{x})-A(y)\|& \geq \| A_1\hat{x}-A_1y\| - \epsilon\|f(\|\hat{x}-x_0\|) A_2\hat{x} -  f(\|y-x_0\|) A_2y\|\\
& \geq (1-\delta_{k})\|\hat{x}-y\| - \epsilon L\|A_2\|_2\|\hat{x}\| \|\hat{x}-y\| - \epsilon B\|A_2\|_2\|\hat{x}-y\|\\
& = (1-\delta_{k}-\epsilon\|A_2\|_2(L\|\hat{x}\|+B)\|\hat{x}-y\|.
\end{align*}
If $\epsilon$ is sufficiently small, we can choose $\alpha_k:=1-\delta_{k}-\epsilon\|A_2\|_2(L r +B)$.
\end{proof}

Any matrix satisfying the RIP of order $k$ with high probability, {  for instance being within certain classes of random matrices \cite{Foucart:2013vn}, induces} maps $A$ via Proposition \ref{th:example} that satisfy the assumptions of Theorem \ref{th:rec result 1}. {  Notice that the form of nonlinearity considered in \eqref{firstqn} is actually quasi-linear, i.e., $A(x) = F(x) x$, where $F(x)  =A_1 +\epsilon f(\|x-x_0\|)A_2$.}

\subsubsection{Quasi-linear compressed sensing in Asteroseismology}\label{sec:quasi linear}

Asteroseismology studies the oscillation occurring inside variable pulsating stars as
seismic waves \cite{Aerts:2010fk}. Some regions of the stellar surface contract and heat up
while others expand and cool down in a regular pattern causing observable changes in the light intensity.  This also means that areas of
different temperature correspond to locations of different expansion of the star and characterize its shape.
Through the analysis of the frequency spectra it is possible
to determine the internal stellar structure. Often complex pulsation patterns with multiperiodic oscillations
are observed and their identification is needed. 
\roman{figure} 
\begin{figure}
\centering
\subfigure[Original shape by means of $u$ and the corresponding $2$-sparse Fourier coefficients $x$.]{
\includegraphics[width=.33\textwidth]{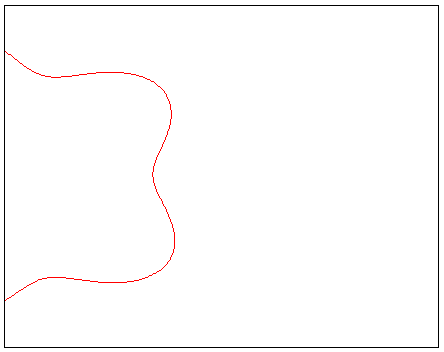}\includegraphics[width=.33\textwidth]{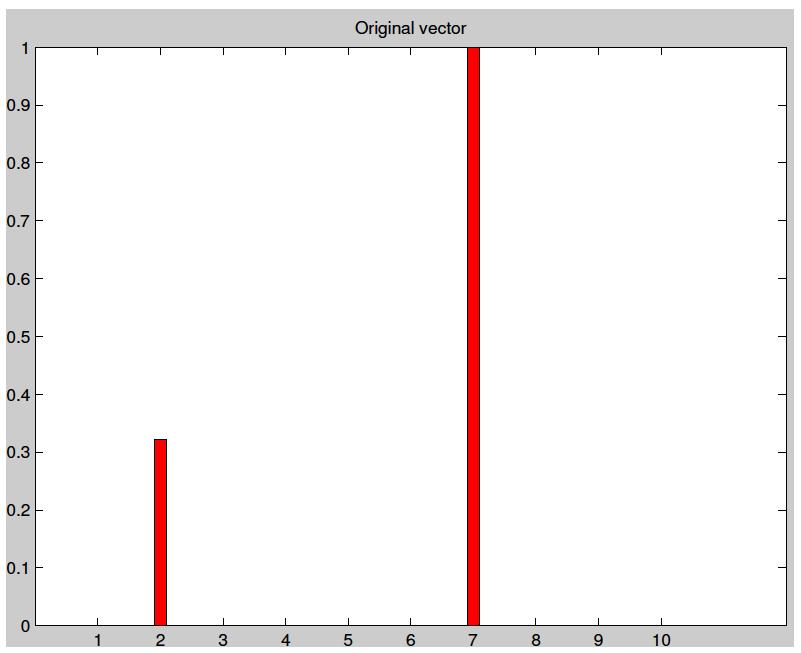}}

\subfigure[$1$- and $2$-sparse reconstruction of the Fourier coefficients.]{
\includegraphics[width=.33\textwidth]{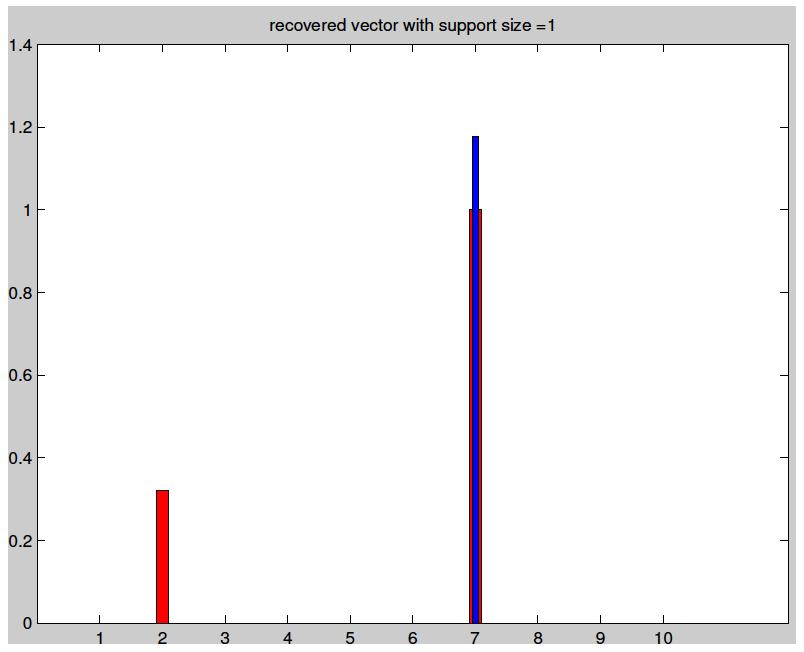}\includegraphics[width=.33\textwidth]{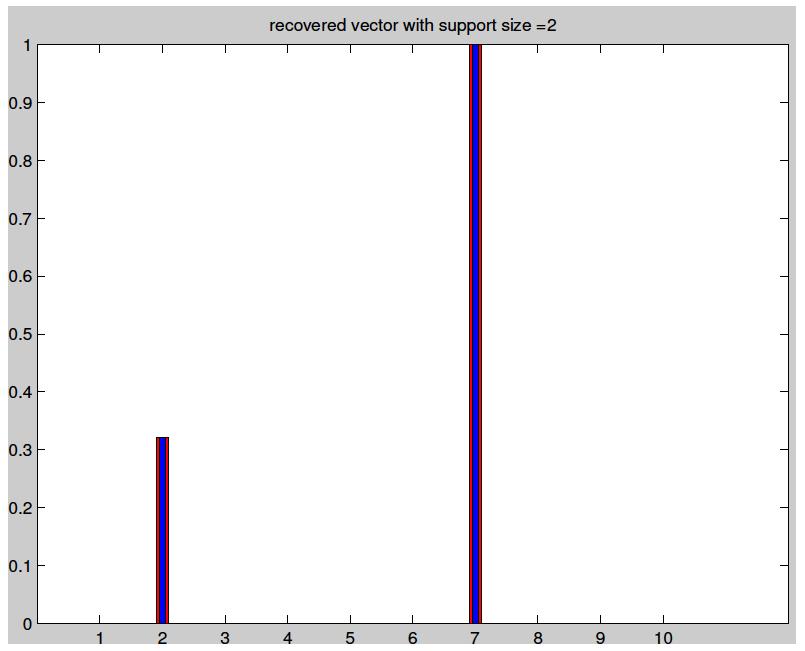}}
\caption{Original $2$-sparse signal and reconstructed pulsation patterns. Red corresponds to the original signal, the reconstruction is given in blue.}\label{fig:pulsation}
\end{figure}

\begin{figure}
\centering
\subfigure[Original shape by means of $u$ and the corresponding $3$-sparse Fourier coefficients $x$.]{
\includegraphics[width=.33\textwidth]{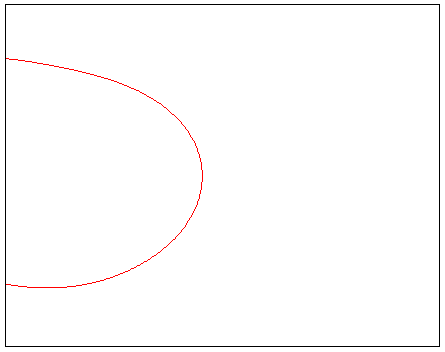}\includegraphics[width=.33\textwidth]{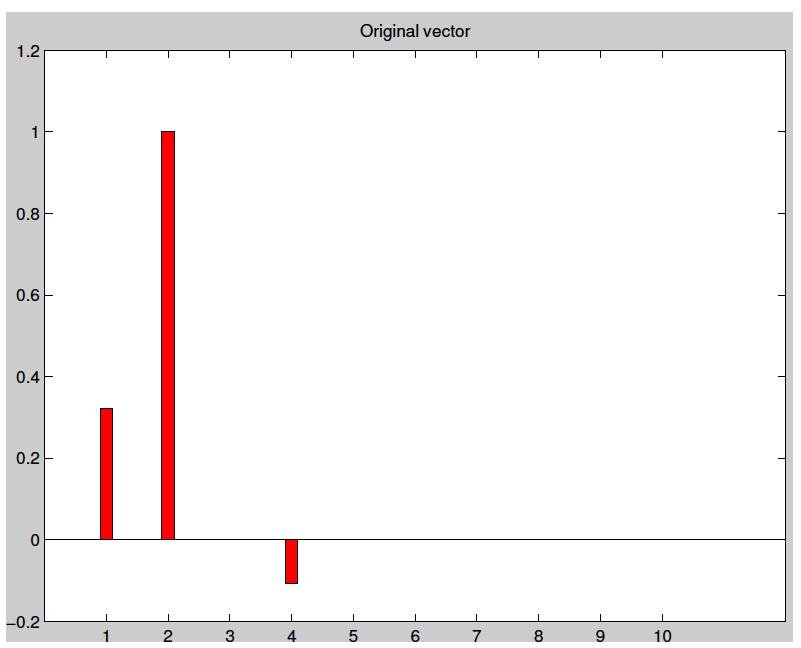}}

\subfigure[$1$-, $2$-, and $3$-sparse reconstruction.]{
\includegraphics[width=.33\textwidth]{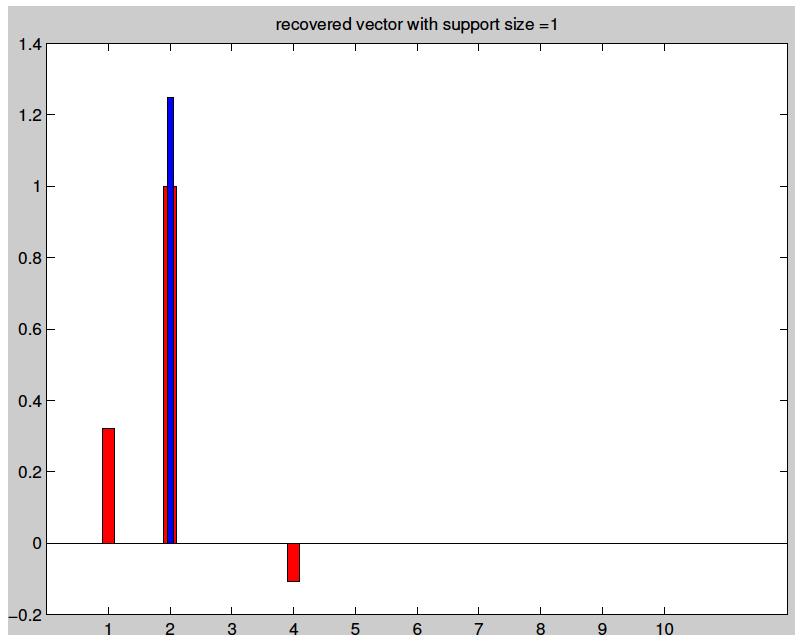}\includegraphics[width=.33\textwidth]{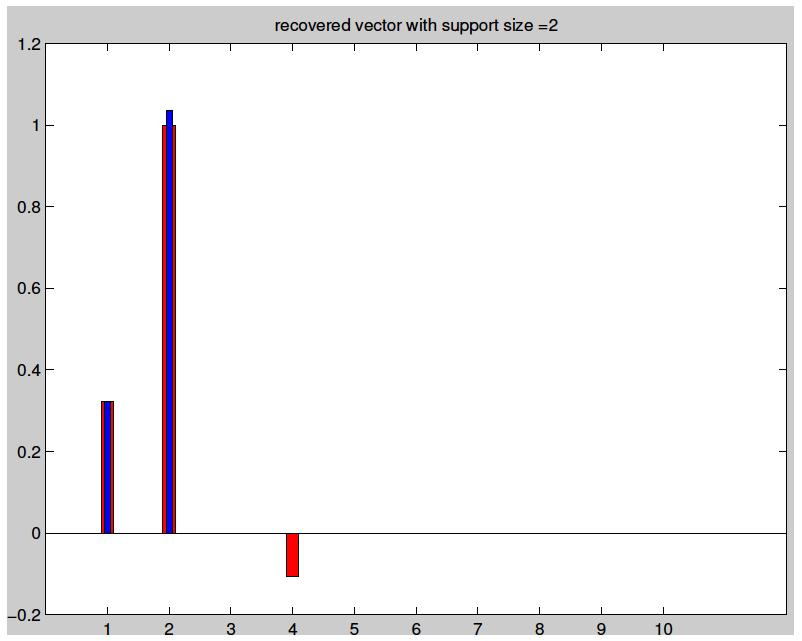}\includegraphics[width=.33\textwidth]{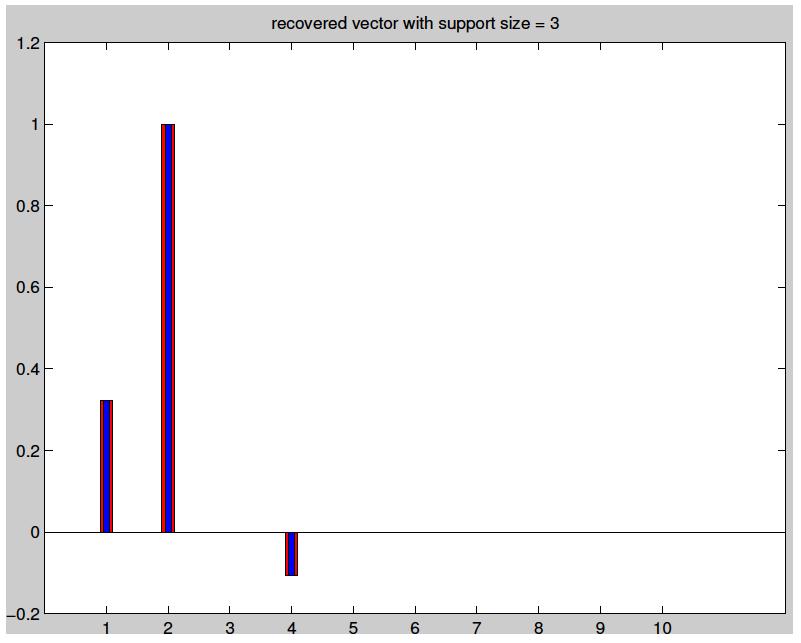}}
\caption{Original $3$-sparse signal and reconstructed pulsation patterns. Red corresponds to the original signal, the reconstruction is given in blue.}\label{fig:pulsation 2}
\end{figure}

\begin{figure}
\centering
\subfigure[original shape]{
\includegraphics[width = .3\textwidth]{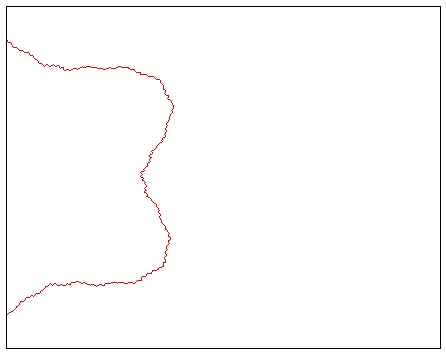}}
\subfigure[reconstructed shape]{
\includegraphics[width = .3\textwidth]{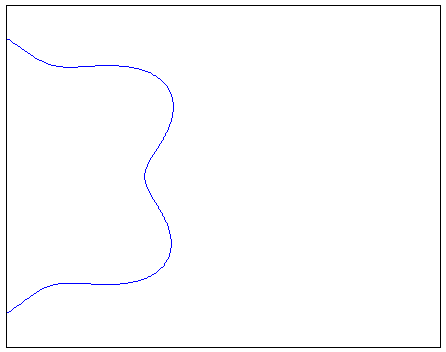}}
\caption{The original signal is rapidly decaying with higher frequencies and we reconstruct $3$ of its largest entries through the greedy algorithm. As desired, the reconstructed shape is a smoothened version of the original one.}\label{fig:noise}
\end{figure}

We refer to \cite{Sigl:2013fk} for a detailed mathematical formulation of the model connecting the instantaneous star shape and its actual light intensity at different {  frequencies}. Here we limit ourselves to a schematic description where we assume the star being
a two dimensional object with a pulsating shape contour. Let the function $u(\varphi)$ {  describe} the star shape contour, for a parameter $-1\leq \varphi\leq 1$, which also simultaneously represents the temperature (or emitted wavelength) on the stellar surface at some fixed point in time. Its oscillatory behavior yields 
\begin{equation*}
u(\varphi) =\sum_{i=1}^d x_i \sin((2\pi\varphi+\theta)i),
\end{equation*}
for some coefficient vector $x=(x_1, \ldots,x_d)$ and some inclination angle $\theta$. This vector $x$ needs to be reconstructed from the instantaneous light measurements $b=(b_1,\ldots, b_n)$, modeled in \cite{Sigl:2013fk} by the formula 
\begin{equation}\label{eq:model}
{  b_l = \frac{\sqrt{\pi}}{2d+1}\sum_{j=-d}^d \omega_l\big (f_j  \sum_{k=1}^d x_k \sin((2\pi \frac{j}{d}+\theta)k) \big) f_j   \sum_{i=1}^d x_i \sin((2\pi \frac{j}{d}+\theta)i) ,\quad l=1,\ldots,n,}
\end{equation}
so that we suppose that $u$ is sampled at $j/d$. 
%where $x$ are the coefficients of the Fourier type expansion of temperature $u$ (or emitted wavelength) sampled over the surface, i.e., 
%\begin{equation*}
%u(j) = \sum_{i=1}^n x_i \sin((2\pi \frac{j}{n}+\theta)i),
%\end{equation*}
Here, $f$ is a correction factor modeling {\it limb darkening}, i.e., the fading intensity of the light of the star towards its limb,  and $\omega_l(\cdot)$ is some partition of unity modeling the wavelength range of each telescope sensor, see \cite{Sigl:2013fk} for details. Notice that the light intensity data \eqref{eq:model} at different frequency bands (corresponding to different  $\omega_l$) are obtained through the quasi-linear measurements $b = A(x) = F(x)x$ with
\begin{equation*}
F(x)_{l,i}:= \frac{\sqrt{\pi}}{2d+1}\sum_{j=-d}^d \omega_l\big (f_j  \sum_{k=1}^d x_k \sin((2\pi \frac{j}{d}+\theta)k) \big) f_j  \sin((2\pi \frac{j}{d}+\theta)i),
\end{equation*}
and one wants to reconstruct a vector $x$ matching the data with few nonzero entries. In fact it is rather accepted in the Asteroseismology community that only few low frequencies  of the star shape (when its contour shape is expanded in spherical harmonics) are relevant \cite{Aerts:2010fk}.
%If we define the matrix $A(x)$ by
%\begin{equation*}
%\big(A(x)\big)_{l,i}=\frac{\sqrt{\pi}}{2n-1}  \sum_{j=-n}^n \omega_l(x_jf_j) f_j\sin((2\pi \frac{j}{n}+\theta)i),
%\end{equation*}
%then the model \eqref{eq:model} can indeed be rewritten as $b=A(x)x$. 

We do not claim that the model \eqref{eq:model} matches all of the assumptions in Theorem \ref{th:rec result 1}, but we shall observe that Algorithm \ref{algo:1} for $p=2$ can be used to identify the instantaneous pulsation patterns of simulated light intensity data, when low frequencies are activated. This is a consequence of the fact that different low frequency activations result in sufficiently uncorrelated measurements in the data to be distinguished by the greedy algorithm towards recovery. For the numerical experiments, the ambient dimension is $d=800$ and we make $n=13$ measurements, see \cite{Sigl:2013fk} for details on the choice of $\omega_l$, $f$, and the used multivariate optimization routines. We generate $2$- and $3$-sparse signals and apply Algorithm \ref{algo:1} in Figs.~\ref{fig:pulsation} and \ref{fig:pulsation 2} to reconstruct the signal. The greedy strategy identifies one additional location of the solution's support at each iteration step and finds the correct signal after $2$ and $3$ steps, respectively. In Fig.~\ref{fig:noise}, we generated a signal whose entries decay rapidly, so that higher frequencies have lower magnitudes. {  We show the reconstruction of the shape after $3$ iterations of  the greedy algorithm}. As expected, higher frequencies are suppressed and we obtain a {  low-pass filter} approximation of the original shape.

%To correct for limb darkening, we choose the standard term
%\begin{equation*}
%f(\nu)=(-2/N)^p\begin{cases}\nu^p, & \nu\in [0,\pi/2]\\
%(N-\nu)^p,& \nu\in (\pi/2,\pi].
%\end{cases}
%\end{equation*}
%We define $h(t)=e^{-1/t}{\bf 1}_{\R_{\geq 0}}$ and put $g(t,b)=\frac{h(t)}{h(t)+h(b-t)}{\bf 1}_{\R_{\geq 0}}$, so that we can derive the weight function
%\begin{equation*}
%\omega_l(u(\nu,t)f(\nu))=h(u(\nu,t)f(\nu)-\omega_l/s,r)
%\end{equation*}
%

\subsection{Deterministic conditions II}

Experiments in X-ray crystallography and diffraction imaging require signal reconstruction from magnitude measurements, usually in terms of light intensities. We do not present the explicit physical models, which would go beyond the scope of the present paper, but refer to the literature instead. It seems impossible to provide a comprehensive list of references, so we only mention \cite{Drenth:2010fk,Fienup:1982vn,Gerchberg:1972kx} for some classical algorithms. 

Let $\hat{x}\in\R^d$ be some signal that we need to reconstruct from measurements $b=\big(|\langle a_i,\hat{x}\rangle |^2\big)_{i=1}^n$, where we selected a set of measurement vectors $\{a_i: i=1,\dots n\} \subset\R^d$. In other words, we have phaseless measurements and need to reconstruct $\pm \hat{x}$. It turns out {  surprisingly} that the above framework of {  reconstruction from nonlinear sensing} can be modified to fit the phase retrieval problem. 
{  Let us stress that so far the most efficient and stable recovery procedures are  based on semi-definite programming, as used in \cite{Candes:uq}, by recasting the problem into a perhaps demanding optimization on matrices. 
In this section we show that there is no need to linearize the problem by lifting the dimensionality towards low-rank matrix recovery, but is is sufficient to address a plain sparse vector recovery in the fully nonlinear setting.}

In models relevant to optical measurements like diffraction imaging and X-ray crystallography, we must deal with the complex setting, in which $x\in\C^d$ is at most determined up to multiplication by a complex unit. We shall state our findings for the real case first and afterwards discuss the extensions to complex vector spaces.

\subsubsection{Quasi-linear compressed sensing in phase retrieval}\label{sec:phase}

Let $\{A_i: i=1, \dots,n\}\subset\R^{d\times d}$ be a collection of measurement matrices. We consider the map
\begin{equation}\label{eq: def A Ai}
A: \R^d\rightarrow \R^{n}, \quad x\mapsto A(x)= F(x) x, \mbox{ where }
F(x) = \begin{pmatrix}  x^* A_1\\
\vdots \\
x^* A_n
\end{pmatrix},
\end{equation}
and we aim at reconstructing a signal vector $\hat{x}\in\R^d$ from measurements $b=A(\hat{x})$. 
Since $A(x)=A(-x)$, the vector $\hat{x}$ can at best be determined up to its sign, and the lower bound in \eqref{eq:1} cannot hold, not allowing us to use directly Theorem \ref{th:rec result 1}. {  However, we notice that for special classes of $A$, for instance, when $\{A_i: i=1, \dots,n\}$ are independent Gaussian matrices, the lower bound in \eqref{eq:1} holds with high probability as long as $y$ stays away from $-\hat{x}$, see the heuristic probability transitions of validity of \eqref{eq:1} shown in Fig.~\ref{fig:1}. Hence, there is the hope that the greedy algorithm can nevertheless be successful, because it proceeds by selecting first the largest components of the expected solution, hence orienting the reconstruction precisely towards the direction within the space where actually \eqref{eq:1} holds with high probability, in a certain sense realizing a self-fulfilling prophecy: the algorithm goes only where it is supposed to work.
In order to make this geometric intuition more explicit we shall modify the deterministic conditions of Theorem \ref{th:rec result 1} accordingly, so that we cover the above setting as well.}

\begin{figure}[h]
\centering
\subfigure[$k=2$]{
\includegraphics[width=.28\textwidth,trim=3.5cm 1cm 2.5cm .5cm,clip=true]{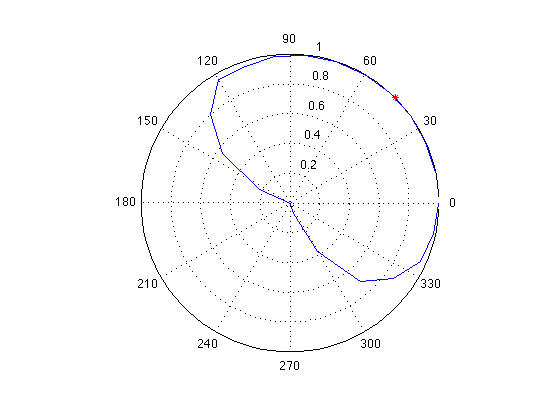}
\includegraphics[width=.28\textwidth,trim=3.5cm 1cm 2.5cm .5cm,clip=true]{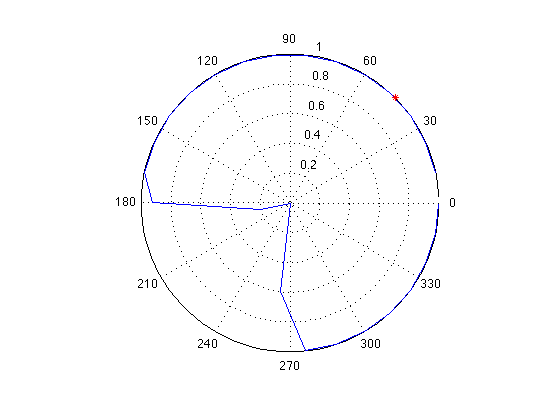}
\includegraphics[width=.28\textwidth,trim=3.5cm 1cm 2.5cm .5cm,clip=true]{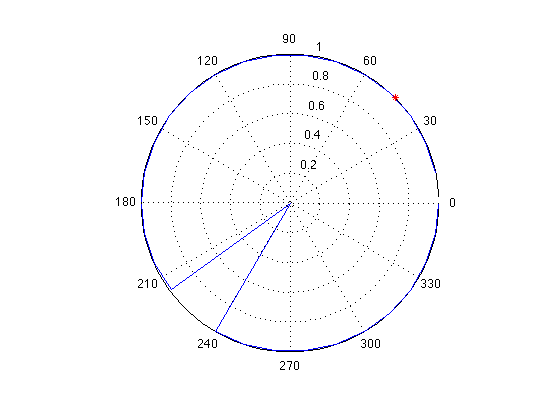}
}

\subfigure[$k=3$]{
\includegraphics[width=.32\textwidth,trim=1.3cm .5cm 1cm 0cm,clip=true]{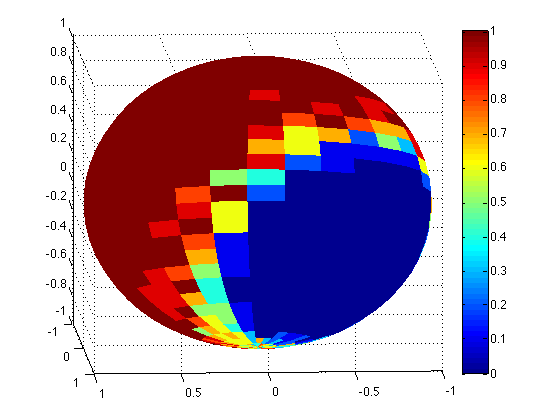}
\includegraphics[width=.32\textwidth,trim=1.3cm .5cm 1cm 0cm,clip=true]{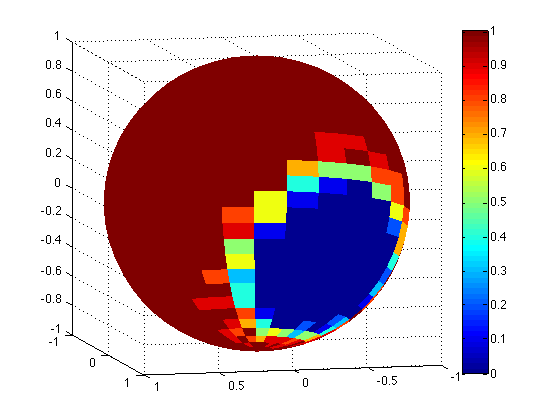}
\includegraphics[width=.32\textwidth,trim=1.3cm .5cm 1cm 0cm,clip=true]{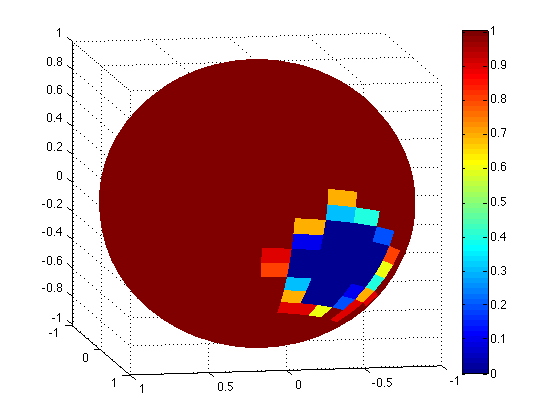}}
\caption{The map $A$ is chosen as in \eqref{eq: def A Ai} with  independent Gaussian matrices $\{A_i\}_{i=1}^n$. We plotted the success rates of the lower bound in \eqref{eq:1} for $k$-sparse $\hat{x}\in S^{d-1}$ marked in red and $y$ running through $S^{d-1}$ with the same sparsity pattern, so that both vectors can be visualized on a $k-1$-dimensional sphere (here for $k=1,2$). Parameters are $d=80$, $n=30$, and $\alpha$ decreases from left to right. 
%(bottom) $\|y\|$ is a multiple of $\|x\|$, which has some effect.
}\label{fig:1}
\end{figure}

%\begin{figure}[h]
%
%\caption{The map $A$ is built up by independent Gaussian matrices $\{A_i: i=1, \dots,n\}$. We plotted the success rates of Inequality \eqref{eq:2} for fixed $k$-sparse $\hat{x}\in S^{d-1}$ and $y$ running through $S^{d-1}$ with the same sparsity pattern, so that both vectors can be visualized on a $k-1$ sphere. Parameters are $d=80$, $n=30$, $k=3$, and $\gamma$ decreases from left to right.
%}\label{fig:2}
%\end{figure}
{  Under slightly different deterministic conditions}, we derive a recovery result very similar to Theorem \ref{th:rec result 1}:
\begin{theorem}\label{th:new}
Let $A$ be given by \eqref{eq: def A Ai} and $b=A(\hat{x})+e$, where $\hat{x}\in\R^d$ is the signal to be recovered and $e\in\R^n$ is a noise term. Suppose further that $1\leq k\leq d$, $r_k(\hat{x})\neq 0$, and $1\leq p<\infty$. If the following conditions are satisfied, \begin{itemize}
\item[(i) ]  there are constants $\alpha_k,\beta_k>0$, such that, for all $k$-sparse $y\in\R^d$,
\begin{equation}\label{eq:1b}
\alpha_k \|\hat{x}_{\{k\}}\hat{x}_{\{k\}}^*-yy^*\|_{HS}\leq \| A(\hat{x}_{\{k\}}) -A(y) \|_{\ell_p}
%\|F(\hat{x}\hat{x}^*) - F(yy^*)\|_{\ell_1}
\leq \beta_k \|\hat{x}_{\{k\}}\hat{x}_{\{k\}}^*-yy^*\|_{HS},
\end{equation} 
%\item choose $L_k\geq 0$ such that $\|A(\hat{x})-A(\hat{x}_{\{k\}})\|_{\ell_p}\leq L_k \|\hat{x}\hat{x}^*-\hat{x}_{\{k\}}\hat{x}_{\{k\}}^*\|_{HS}$, 
\item[(ii) ] $\hat{x}\in\mathcal{D}_\kappa$ with $\kappa<\frac{\tilde{\alpha}_k}{\sqrt{\tilde{\alpha}_k^2+2(\beta_k+2L_k)^2}}$, where $0<\tilde{\alpha}_k\leq \alpha_k-2\|e\|_{\ell_p}/r_k(\hat{x})$ and $L_k\geq 0$ with  $\|A(\hat{x})-A(\hat{x}_{\{k\}})\|_{\ell_p}\leq L_k \|\hat{x}\hat{x}^*-\hat{x}_{\{k\}}\hat{x}_{\{k\}}^*\|_{HS}$,
\end{itemize}
then the $\ell_p$-greedy Algorithm \ref{algo:1} yields a sequence $(x^{(j)})_{j=1}^k$ satisfying $\supp(x^{(j)}) = \supp(\hat{x}_{\{j\}})$ and 
\begin{equation*}
\| x^{(j)}{x^{(j)}}^* -\hat{x}\hat{x}^*\|_{HS} \leq \|e\|_{\ell_p}/\alpha_k+ \kappa^jr_1(\hat{x}) \sqrt{3} (1+\frac{\beta_k+2L_k}{\alpha_k}).
%\frac{\beta_k/\alpha_k+2L_k/\alpha_k+1 }{\sqrt{1-\kappa^2}}.
%2\|e\|+\kappa^j r_1(\hat{x})\sqrt{2}(1+\frac{\beta_k+2L_k}{\alpha_k}) ,
\end{equation*}
If $\hat{x}$ is $k$-sparse, then $\|x^{(k)}-\hat{x}\|\leq \|e\|_{\ell_p}/\alpha_k$.
%recovers $x^{(k)}=\pm\hat{x}$. 
\end{theorem}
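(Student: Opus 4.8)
The plan is to run the proof of Theorem \ref{th:rec result 1} essentially verbatim, but with the Euclidean distance $\|x-y\|$ replaced throughout by the \emph{lifted distance} $\rho(x,y):=\|xx^*-yy^*\|_{HS}$. Two structural observations make this work. First, the phase-retrieval map \eqref{eq: def A Ai} factors through the lift: $A(x)_i=x^*A_ix$ depends linearly on $xx^*$, so $A(x)=\widetilde A(xx^*)$ for a fixed linear map $\widetilde A$ on symmetric matrices, and \eqref{eq:1b} together with the $L_k$-condition in (ii) is precisely a restricted isometry statement for $\widetilde A$ on the differences $\hat x_{\{k\}}\hat x_{\{k\}}^*-yy^*$ and $\hat x\hat x^*-\hat x_{\{k\}}\hat x_{\{k\}}^*$. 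Second, $\rho$ is a genuine metric on $\R^d/\{\pm1\}$ (symmetric, vanishing exactly when $x=\pm y$, and satisfying the triangle inequality because it is the $\|\cdot\|_{HS}$-norm of a difference), so every chain of triangle inequalities used in the proof of Theorem \ref{th:rec result 1} — both in the support-identification step and in the final error estimate — transfers line by line with $\|\cdot\|$ replaced by $\rho$ and $\|A(\cdot)-A(\cdot)\|_{\ell_p}$ sandwiched between $\alpha_k\rho$ and $\beta_k\rho$ just as in \eqref{eq:1}.

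The genuinely new ingredients are the comparison estimates between $\rho$ and Euclidean quantities for vectors with nested supports, which play the role of Lemmas \ref{lemma:-1} and \ref{lemma:0}. On the decay (upper) side one uses the identity $\|xx^*-yy^*\|_{HS}^2=\tfrac12\bigl(\|x-y\|^2\|x+y\|^2+(\|x\|^2-\|y\|^2)^2\bigr)$, which follows from $xx^*-yy^*=\tfrac12\bigl((x-y)(x+y)^*+(x+y)(x-y)^*\bigr)$; specialized to $x=\hat x$, $y=\hat x_{\{j\}}$ (orthogonal complementary supports) it gives $\rho(\hat x,\hat x_{\{j\}})^2=\|\hat x-\hat x_{\{j\}}\|^2\bigl(2\|\hat x_{\{j\}}\|^2+\|\hat x-\hat x_{\{j\}}\|^2\bigr)$, and likewise for $\rho(\hat x_{\{k\}},\hat x_{\{j\}})$. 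Combining this with Lemma \ref{lemma:-1} and the estimate $\|\hat x-\hat x_{\{j\}}\|^2\le\tfrac{\kappa^2}{1-\kappa^2}\|\hat x_{\{j\}}\|^2$ implied by $\hat x\in\mathcal D_\kappa$, one obtains $\rho(\hat x,\hat x_{\{j\}})\le\sqrt3\,\|\hat x\|\,r_{j+1}(\hat x)/\sqrt{1-\kappa^2}$ and an analogous geometric-in-$\kappa$ bound for $\rho(\hat x_{\{k\}},\hat x_{\{j\}})$. Feeding these into the analogue of Lemma \ref{lemma:0}, the sharpened threshold $\kappa<\tilde\alpha_k/\sqrt{\tilde\alpha_k^2+2(\beta_k+2L_k)^2}$ in (ii) is exactly what is needed: the extra factor $2$ (in place of $1$ in Theorem \ref{th:rec result 1}) absorbs the $\sqrt2$ coming from $\|xx^*-yy^*\|_{HS}\approx\sqrt2\,\|\hat x_{\{k\}}\|\,\|x-y\|$, and it guarantees the strict inequality $\alpha_k r_j(\hat x)\|\hat x_{\{k\}}\|>2\|e\|_{\ell_p}+2L_k\rho(\hat x,\hat x_{\{k\}})+\beta_k\rho(\hat x_{\{k\}},\hat x_{\{j\}})$ for $j=1,\dots,k$.

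With these facts the induction on the support mirrors the one in Theorem \ref{th:rec result 1}. Assuming $\Lambda^{(j-1)}\subset\supp(\hat x_{\{j-1\}})$ and choosing $l\notin\supp(\hat x_{\{j\}})$, the iterate $x^{(j,l)}$ is supported on at most $j$ indices and therefore omits some $m\in\supp(\hat x_{\{j\}})\subset\supp(\hat x_{\{k\}})$ with $|\hat x_m|\ge r_j(\hat x)$; the $m$-th row of $\hat x_{\{k\}}\hat x_{\{k\}}^*-x^{(j,l)}{x^{(j,l)}}^*$ then equals $\hat x_m\hat x_{\{k\}}^*$, which yields the crucial lower bound $\rho(\hat x_{\{k\}},x^{(j,l)})\ge|\hat x_m|\,\|\hat x_{\{k\}}\|\ge r_j(\hat x)\,\|\hat x_{\{k\}}\|$. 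Substituting this into \eqref{eq:1b} and the $L_k$-condition exactly as in the display leading to \eqref{eq:op 1} gives a strict lower bound on $\|b-A(x^{(j,l)})\|_{\ell_p}$ which, via the Lemma \ref{lemma:0} analogue, contradicts the upper bound on $\|b-A(x^{(j)})\|_{\ell_p}$ obtained from the minimality of $x^{(j)}$ and the upper estimate for the true $j$-term vector $\hat x_{\{j\}}$; hence $l_j\in\supp(\hat x_{\{j\}})$ and $\supp(x^{(j)})=\supp(\hat x_{\{j\}})$. The error bound then follows by the same telescoping as in Theorem \ref{th:rec result 1}: $\rho(x^{(j)},\hat x)\le\rho(x^{(j)},\hat x_{\{k\}})+\rho(\hat x_{\{k\}},\hat x)$, estimate $\rho(x^{(j)},\hat x_{\{k\}})\le\alpha_k^{-1}\|A(x^{(j)})-A(\hat x_{\{k\}})\|_{\ell_p}$, expand by the triangle inequality, the minimality of $x^{(j)}$ and condition (ii), and insert the decay estimates to get geometric decay of order $\kappa^j r_1(\hat x)$; bounding the prefactor roughly as $\sqrt3\bigl(1+\tfrac{\beta_k+2L_k}{\alpha_k}\bigr)/\sqrt{1-\kappa^2}$ gives the stated constant. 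When $\hat x$ is $k$-sparse we have $\hat x=\hat x_{\{k\}}$, so $\rho(\hat x,\hat x_{\{k\}})=0$ and the lower bound in \eqref{eq:1b} yields $\rho(x^{(k)},\hat x)\le\|e\|_{\ell_p}/\alpha_k$.

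I expect the main obstacle to be the lower estimate $\rho(\hat x_{\{k\}},x^{(j,l)})\ge r_j(\hat x)\|\hat x_{\{k\}}\|$ in the greedy step, since this is exactly where the sign ambiguity of phase retrieval is neutralized: although $\rho(\hat x_{\{k\}},y)$ can be arbitrarily small for $y$ near $-\hat x_{\{k\}}$, the candidates generated by the greedy algorithm already carry (most of) the dominant support of $\hat x$, which excludes near-antipodal vectors and makes the lower bound in \eqref{eq:1b} usable along the algorithm's trajectory — this is the rigorous form of the ``self-fulfilling prophecy'' heuristic in the text. The remainder is careful bookkeeping of the constants relating $\rho$ to the Euclidean metric, which accounts for the $\sqrt3$ in the conclusion and for the factor $2$ in the $\kappa$-threshold of (ii).
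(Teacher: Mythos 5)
Your proposal follows the paper's proof essentially step for step: the Hilbert--Schmidt analogues of Lemmas \ref{lemma:-1} and \ref{lemma:0}, the crucial lower bound $\|\hat{x}_{\{k\}}\hat{x}_{\{k\}}^*-x^{(j,l)}{x^{(j,l)}}^*\|_{HS}\geq r_j(\hat{x})\,\|\hat{x}_{\{k\}}\|$ obtained because $x^{(j,l)}{x^{(j,l)}}^*$ has a vanishing row and column at an index of the support of $\hat{x}_{\{j\}}$ that $x^{(j,l)}$ omits, the contradiction between this and the minimality-based upper bound via $\hat{x}_{\{j\}}$, and the final telescoping error estimate are all exactly the paper's argument. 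Your identity $\|xx^*-yy^*\|_{HS}^2=\tfrac{1}{2}\bigl(\|x-y\|^2\|x+y\|^2+(\|x\|^2-\|y\|^2)^2\bigr)$ simply supplies the proof of Lemma \ref{lemma:-1b} that the paper omits as straightforward (where the sharp constant is $\sqrt{2}$ rather than your $\sqrt{3}$, which is what lets the prefactor $1/\sqrt{1-\kappa^2}$ be absorbed into the stated constant using $\kappa<1/\sqrt{3}$).
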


\begin{remark}
Note that \eqref{eq:1b} resembles the restricted isometry property for rank minimization problems, in which $\alpha_k$ and $\beta_k$ are required to be close to each other, see \cite{Goldfarb:2010fk,Recht:2010fk} and references therein. In Theorem \ref{th:new}, we can allow for any pair of constants and compensate deviation between $\alpha_k$ and $\beta_k$ by adding the decay condition on the signal. In other words, we shift conditions on the measurements towards conditions on the signal. 
\end{remark}

The structure of the proof of Theorem \ref{th:new} is almost the same as the one for Theorem \ref{th:rec result 1}, so that we first derive results similar to the Lemmas \ref{lemma:-1} and \ref{lemma:0}:
\begin{lemma}\label{lemma:-1b}
If $\hat{x}\in\R^d$ is contained in $\mathcal{D}_\kappa$, then 
\begin{equation}\label{eq:2ba0b}
\|\hat{x}\hat{x}^*-\hat{x}_{\{j\}}\hat{x}_{\{j\}}^*\|_{HS} < \sqrt{2}\|\hat{x}\|r_{j+1}(\hat{x}) \frac{1}{\sqrt{1-\kappa^2}} \leq \sqrt{2}\|\hat{x}\|r_{j}(\hat{x}) \frac{\kappa}{\sqrt{1-\kappa^2}}.
\end{equation}
\end{lemma}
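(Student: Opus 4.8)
The plan is to reduce the matrix-norm estimate to the vector estimate already proved in Lemma~\ref{lemma:-1}, by exploiting the algebraic identity relating the difference of two rank-one matrices to the underlying vectors. Writing $\hat x = \hat x_{\{j\}} + (\hat x - \hat x_{\{j\}})$ and setting $u = \hat x_{\{j\}}$, $v = \hat x - \hat x_{\{j\}}$, the two terms have disjoint supports, so $\skalpro{u}{v}=0$ and $\|\hat x\|^2 = \|u\|^2 + \|v\|^2$. Expanding,
\begin{equation*}
\hat x\hat x^* - uu^* = uv^* + vu^* + vv^*.
\end{equation*}
The first step is to compute the Hilbert--Schmidt norm of the right-hand side. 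Since $u \perp v$, the three summands $uv^*$, $vu^*$, $vv^*$ are mutually orthogonal in the Hilbert--Schmidt inner product (one checks $\trace((uv^*)^*(vu^*)) = \skalpro{u}{v}^2 = 0$, $\trace((uv^*)^*(vv^*)) = \skalpro{u}{v}\|v\|^2 = 0$, etc.), so by the Pythagorean theorem
\begin{equation*}
\|\hat x\hat x^* - uu^*\|_{HS}^2 = \|uv^*\|_{HS}^2 + \|vu^*\|_{HS}^2 + \|vv^*\|_{HS}^2 = 2\|u\|^2\|v\|^2 + \|v\|^4.
\end{equation*}

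The second step is to bound this quantity. Factoring out $\|v\|^2$ gives $\|v\|^2(2\|u\|^2 + \|v\|^2)$, and since $2\|u\|^2 + \|v\|^2 \le 2(\|u\|^2 + \|v\|^2) = 2\|\hat x\|^2$, we obtain
\begin{equation*}
\|\hat x\hat x^* - \hat x_{\{j\}}\hat x_{\{j\}}^*\|_{HS} \le \sqrt{2}\,\|\hat x\|\,\|v\| = \sqrt{2}\,\|\hat x\|\,\|\hat x - \hat x_{\{j\}}\|.
\end{equation*}
The third step is simply to insert the estimate $\|\hat x - \hat x_{\{j\}}\| < r_{j+1}(\hat x)/\sqrt{1-\kappa^2} \le r_j(\hat x)\kappa/\sqrt{1-\kappa^2}$ from Lemma~\ref{lemma:-1}, which yields exactly \eqref{eq:2ba0b}. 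Strictness of the inequality is inherited from Lemma~\ref{lemma:-1} (with the harmless convention that if $\hat x$ is $j$-sparse then $v=0$ and the bound is trivially $0$, but then $r_{j+1}(\hat x)$ may be zero too, so one may assume $\hat x$ is not $j$-sparse, exactly as in the proof of Lemma~\ref{lemma:0}).

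I do not expect a genuine obstacle here: the only point needing care is the orthogonality bookkeeping in the Hilbert--Schmidt inner product, i.e. verifying that the cross terms $\trace((uv^*)^*(vu^*))$ and $\trace((uv^*)^*(vv^*))$ vanish because $u$ and $v$ have disjoint supports. Once that is in place the rest is the geometric-series estimate already packaged in Lemma~\ref{lemma:-1}, so — consistent with the remark following Lemma~\ref{lemma:-1} in the linear case — the proof is a short direct calculation and the details can reasonably be compressed or omitted.
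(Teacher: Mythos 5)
Your proof is correct. The paper itself omits the proof of this lemma as ``straight-forward,'' so there is nothing to compare against line by line, but your computation is exactly the natural one and the orthogonality bookkeeping is right: with $u=\hat x_{\{j\}}$ and $v=\hat x-\hat x_{\{j\}}$ having disjoint supports, the three terms $uv^*$, $vu^*$, $vv^*$ are mutually orthogonal in the Hilbert--Schmidt inner product, giving $\|\hat x\hat x^*-uu^*\|_{HS}^2=\|v\|^2(2\|u\|^2+\|v\|^2)\le 2\|\hat x\|^2\|v\|^2$, after which Lemma~\ref{lemma:-1} finishes the job. It is worth noting that this direct expansion is genuinely needed: the cruder route of invoking \eqref{eq:HS with pm} with $x=\hat x$, $y=\hat x_{\{j\}}$ only yields $\|\hat x-\hat x_{\{j\}}\|\,\|\hat x+\hat x_{\{j\}}\|\le 2\|\hat x\|\,\|v\|$, i.e.\ a constant $2$ rather than the stated $\sqrt 2$. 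Your handling of the strictness (excluding the degenerate $j$-sparse case, exactly as in the proof of Lemma~\ref{lemma:0}) is also appropriate.
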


We omit the straight-forward proof and state the second lemma that is needed: 
\begin{lemma}\label{lemma:1}
Fix $1\leq k\leq d$ and suppose that $r_k(\hat{x})\neq 0$. If $\hat{x}\in\R^d$ is contained in $\mathcal{D}_\kappa$ with $\kappa<\frac{\tilde{\alpha}}{\sqrt{\tilde{\alpha}^2+2(\beta+2L)^2}}$, where $0<\tilde{\alpha}\leq \alpha-2\|e\|_{\ell_p}/r_k(\hat{x})$, for some $\alpha,\beta,L>0$, then, for $j=1,\ldots,k$,
\begin{equation}\label{eq:2b}
%\alpha \|\hat{x}\| r_j(\hat{x})> \beta\|\hat{x}_{\{j\}}\hat{x}_{\{j\}}^*-\hat{x}\hat{x}^*\|_{HS}.
\alpha \|\hat{x}\| r_j(\hat{x})>2\|e\|_{\ell_p}+\beta\|\hat{x}_{\{j\}}\hat{x}^*_{\{j\}}-\hat{x}_{\{k\}}\hat{x}_{\{k\}}^*\|_{HS}+2L\|\hat{x}_{\{k\}}\hat{x}_{\{k\}}^*-\hat{x}\hat{x}^*\|_{HS}
\end{equation}
\end{lemma}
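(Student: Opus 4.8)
The plan is to follow the proof of Lemma~\ref{lemma:0} almost verbatim, with the one structural change that the elementary tail estimate of Lemma~\ref{lemma:-1} is replaced by its rank-one--lift counterpart Lemma~\ref{lemma:-1b}; the extra factor $2$ under the square root in the hypothesis on $\kappa$ and the extra factor $\|\hat x\|$ on the left of \eqref{eq:2b} are exactly the traces left by this substitution. As in Lemma~\ref{lemma:0}, I would first dispose of the case in which $\hat x$ is $j$-sparse: then $\hat x_{\{j\}}=\hat x_{\{k\}}=\hat x$, both Hilbert--Schmidt terms on the right of \eqref{eq:2b} disappear, and the claim reduces to $\alpha\|\hat x\|\,r_j(\hat x)>2\|e\|_{\ell_p}$, which follows from the hypothesis $0<\tilde\alpha\le\alpha-2\|e\|_{\ell_p}/r_k(\hat x)$ (and $\|\hat x\|\ge r_k(\hat x)$).

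For $\hat x$ not $j$-sparse, I would first rewrite the assumption $\kappa<\tilde\alpha/\sqrt{\tilde\alpha^2+2(\beta+2L)^2}$ in the equivalent form $\sqrt{2}\,\tfrac{\kappa}{\sqrt{1-\kappa^2}}(\beta+2L)<\tilde\alpha$ (square both sides, clear the denominator, take roots) and combine it with $\tilde\alpha\le\alpha-2\|e\|_{\ell_p}/r_k(\hat x)$ to obtain
\begin{equation*}
\alpha-\frac{2\|e\|_{\ell_p}}{r_k(\hat x)}-\sqrt{2}\,\frac{\kappa}{\sqrt{1-\kappa^2}}\,(\beta+2L)>0.
\end{equation*}
Multiplying through by $\|\hat x\|\,r_j(\hat x)$ and, just as in Lemma~\ref{lemma:0}, using $r_j(\hat x)\ge r_k(\hat x)$ to absorb the extraneous factor multiplying $\|e\|_{\ell_p}$, this gives
\begin{equation*}
\alpha\,\|\hat x\|\,r_j(\hat x)>2\|e\|_{\ell_p}+\sqrt{2}\,\frac{\kappa}{\sqrt{1-\kappa^2}}\,(\beta+2L)\,\|\hat x\|\,r_j(\hat x).
\end{equation*}

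It then suffices to bound each Hilbert--Schmidt term in \eqref{eq:2b} by $\sqrt{2}\,\tfrac{\kappa}{\sqrt{1-\kappa^2}}\,\|\hat x\|\,r_j(\hat x)$. For $\|\hat x_{\{k\}}\hat x_{\{k\}}^*-\hat x\hat x^*\|_{HS}$ this is Lemma~\ref{lemma:-1b} applied with $k$ in place of $j$, together with $r_{k+1}(\hat x)\le r_{j+1}(\hat x)$. For $\|\hat x_{\{j\}}\hat x_{\{j\}}^*-\hat x_{\{k\}}\hat x_{\{k\}}^*\|_{HS}$ I would invoke the monotonicity $\|\hat x_{\{j\}}\hat x_{\{j\}}^*-\hat x_{\{k\}}\hat x_{\{k\}}^*\|_{HS}\le\|\hat x_{\{j\}}\hat x_{\{j\}}^*-\hat x\hat x^*\|_{HS}$ and then again Lemma~\ref{lemma:-1b}. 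The monotonicity comes from the identity $\|uy^*+yu^*+yy^*\|_{HS}=\|y\|\sqrt{2\|u\|^2+\|y\|^2}$ valid for $y\perp u$: taking $u=\hat x_{\{j\}}$ and $y$ equal to the two disjointly supported tails $\hat x_{\{k\}}-\hat x_{\{j\}}$ and $\hat x-\hat x_{\{j\}}$, the inequality follows because $t\mapsto t\sqrt{2\|u\|^2+t^2}$ is increasing and $\|\hat x_{\{k\}}-\hat x_{\{j\}}\|\le\|\hat x-\hat x_{\{j\}}\|$. Equivalently, one may apply Lemma~\ref{lemma:-1b} directly to the truncated vector $\hat x_{\{k\}}$, which again belongs to $\mathcal{D}_\kappa$ and satisfies $(\hat x_{\{k\}})_{\{j\}}=\hat x_{\{j\}}$, $r_{j+1}(\hat x_{\{k\}})\le r_{j+1}(\hat x)$, and $\|\hat x_{\{k\}}\|\le\|\hat x\|$. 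In all cases Lemma~\ref{lemma:-1b} yields the desired bound $\le\sqrt{2}\,\|\hat x\|\,r_{j+1}(\hat x)/\sqrt{1-\kappa^2}\le\sqrt{2}\,\tfrac{\kappa}{\sqrt{1-\kappa^2}}\,\|\hat x\|\,r_j(\hat x)$, and substituting the two estimates into the previous display produces \eqref{eq:2b}.

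The only genuinely new ingredient compared with Lemma~\ref{lemma:0} is the monotonicity of the Hilbert--Schmidt distance between rank-one lifts under enlarging the truncation level --- trivial for plain vectors, but requiring the short norm identity above --- so I expect this, together with carefully tracking where the extra factors $\|\hat x\|$ land, to be the only real point to watch; everything else is the verbatim arithmetic already carried out for Lemma~\ref{lemma:0}.
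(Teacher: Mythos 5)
Your proof is correct and follows essentially the same route as the paper's: rewrite the condition on $\kappa$ as $\sqrt{2}\,\tfrac{\kappa}{\sqrt{1-\kappa^2}}(\beta+2L)<\tilde\alpha$, multiply by $\|\hat x\|r_j(\hat x)$, and control the Hilbert--Schmidt terms via Lemma \ref{lemma:-1b}. The only real difference is that where the paper compresses the final step into an unjustified ``we can further estimate,'' you supply the rank-one identity $\|uy^*+yu^*+yy^*\|_{HS}=\|y\|\sqrt{2\|u\|^2+\|y\|^2}$ (for $y\perp u$) that actually proves the needed monotonicity $\|\hat x_{\{j\}}\hat x_{\{j\}}^*-\hat x_{\{k\}}\hat x_{\{k\}}^*\|_{HS}\le\|\hat x_{\{j\}}\hat x_{\{j\}}^*-\hat x\hat x^*\|_{HS}$ --- a welcome addition; note only that your absorption of the factor multiplying $\|e\|_{\ell_p}$ tacitly uses $\|\hat x\|\ge 1$, exactly as the paper's first display does.
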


\begin{proof}[Proof of Lemma \ref{lemma:1}]
As in the proof of Lemma \ref{lemma:0}, the conditions on $\kappa$ imply
\begin{equation*}
\alpha-\frac{2\|e\|_{\ell_p}}{\|\hat{x}\| r_k(\hat{x})} - \frac{\kappa}{\sqrt{1-\kappa^2}} \sqrt{2}(\beta+2L)>0.
\end{equation*}
Multiplying by $\|\hat{x}\| r_j(\hat{x})$ and applying Lemma \ref{lemma:-1b} yield
\begin{equation*}
\alpha\|\hat{x}\| r_j(\hat{x})-2\|e\|_{\ell_p} - (\beta+2L)\|\hat{x}_{\{j\}}\hat{x}^*_{\{j\}}-\hat{x}\hat{x}^*\|_{HS}>0.
\end{equation*}
%
%
%We simply evaluate the Hilbert-Schmidt norm and apply the formula for the geometric series to derive
%\begin{align*}
%\beta^2\|\hat{x}_{\{j\}}\hat{x}_{\{j\}}^*-\hat{x}\hat{x}^*\|^2_{HS}  & \leq 
%\beta^2 (\|\hat{x}\|^2+\|\hat{x}_{\{j\}}\|^2)\sum_{l=j+1}^d r^2_l(\hat{x})\\
%%2\beta^2\|\hat{x}\|^2\sum_{l=j+1}^d r^2_l(\hat{x}) \\
%%&= 2\beta^2\|\hat{x}\|^2 r^2_j(\hat{x})\kappa\sum_{l=0}^{d-j-1} \kappa^{l}\\
%%& =\beta^2\|\hat{x}_{\{j\}}\hat{x}_{\{j\}}^*-\hat{x}\hat{x}^*\|^2_{HS} \\
%&\leq 2\beta^2\|\hat{x}\|^2 r^2_j(\hat{x})\frac{\kappa^2}{1-\kappa^2} \\
%&<\alpha^2\|\hat{x}\|^2 r^2_j(\hat{x}).\qedhere
% \end{align*}
% 
%
%
% \begin{equation*}
%\tilde{\alpha}_k   -(\beta_k+2L_k)\sqrt{2}\frac{\kappa}{\sqrt{1-\kappa^2}}>0.
% \end{equation*} 
%
% \begin{equation*}
%\tilde{\alpha}_k \|\hat{x}\|r_j(\hat{x})   -(\beta_k+2L_k)\sqrt{2}\|\hat{x}\|r_{j}(\hat{x}) \frac{\kappa}{\sqrt{1-\kappa^2}}>0.
% \end{equation*} 
 We can further estimate
 \begin{equation*}
\alpha\|\hat{x}\| r_j(\hat{x})-2\|e\|_{\ell_p}  -\beta\|\hat{x}_{\{j\}}\hat{x}^*_{\{j\}}-\hat{x}_{\{k\}}\hat{x}_{\{k\}}^*\|_{HS}-2L\|\hat{x}_{\{k\}}\hat{x}_{\{k\}}^*-\hat{x}\hat{x}^*\|_{HS}>0,
 \end{equation*}
which concludes the proof.
 %The last term indeed equals $\alpha^2\|\hat{x}\hat{x}_{[j]}^*\|^2_{HS}$.
\end{proof}

%\begin{align*}
%\beta^2\|\hat{x}_{\{j\}}\hat{x}_{\{j\}}^*-\hat{x}\hat{x}^*\|^2_{HS}   &\leq  \beta (\|\hat{x}\|^2+\|\hat{x}_{\{j\}}\|^2)\sum_{l=j+1}^d r^2_l(\hat{x})\\
%& \leq \beta (\|\hat{x}\|^2+\|\hat{x}_{\{j\}}\|^2) r^2_j(\hat{x})\frac{\kappa^2}{1-\kappa^2}\\
%& \leq \alpha (\|\hat{x}\|^2+\|\hat{x}_{\{j\}}\|^2) r^2_j(\hat{x})
%\end{align*}

%If $A$ is defined by \eqref{eq: def A Ai}, then $A(x)$ is a quadratic form that is closely related to 
%\begin{equation*}
%F:\mathscr{H}\rightarrow \R^n,\quad X\mapsto \big(\langle X,A_1\rangle ,\ldots,\langle X,A_n\rangle\big)^\top,
%\end{equation*}
%where $\mathscr{H}$ denotes the collection of symmetric matrices in $\R^{d\times d}$ and $\langle X,A_j\rangle$ is the Hilbert-Schmidt inner product between $X$ and $A_j$. The map $F$ is linear and satisfies $F(xx^*) = A(x)$. In other words, $F$ models $A$ after the identification of $x$ with $-x$ performed by $x\mapsto xx^*$. 
%In the proofs, we shall use rather $F$ than $A$.
\begin{proof}[Proof of Theorem \ref{th:new}]
As in the proof of Theorem \ref{th:rec result 1}, we must check that the index set selected by the $\ell_p$-greedy Algorithm \ref{algo:1} matches the location of the nonzero entries of $\hat{x}_{\{k\}}$. Again, we use induction and the initialization $j=0$ is trivial. Now, we suppose that $\Lambda^{(j-1)}\subset \supp(\hat{x}_{\{j-1\}})$ and choose $l\not\in \supp(\hat{x}_{\{j\}})$. The lower bound in \eqref{eq:1b} yields
\begin{align*}
\|A(x^{(j,l)})-A(\hat{x})\|_{\ell_p} & \geq \|A(x^{(j,l)})-A(\hat{x}_{\{k\}})\|_{\ell_p} -\|A(\hat{x}_{\{k\}})-A(\hat{x})\|_{\ell_p}  \\
& \geq   \alpha_k\| x^{(j,l)}{x^{(j,l)}}^*-\hat{x}_{\{k\}}\hat{x}_{\{k\}}^* \|_{HS}	-L_k\|\hat{x}_{\{k\}}\hat{x}_{\{k\}}^*-\hat{x}\hat{x}^*\|_{HS}	\\
& \geq \alpha \|\hat{x}_{\{k\}}\|  r_j(\hat{x})-L_k\|\hat{x}_{\{k\}}\hat{x}_{\{k\}}^*-\hat{x}\hat{x}^*\|_{HS},
\end{align*}
which is due to $l\not\in \supp(\hat{x}_{\{j\}})$, so that one row and one column of $x^{(j,l)}{x^{(j,l)}}^*$ corresponding to one of the $j$-largest entries of $\hat{x}$ are zero. Lemma \ref{lemma:1} implies
\begin{equation}\label{eq:op new 1}
\|A(x^{(j,l)})-A(\hat{x})\|_{\ell_p} > 2\|e\|_{\ell_p}+\beta_k\|\hat{x}_{\{j\}}\hat{x}^*_{\{j\}}-\hat{x}_{\{k\}}\hat{x}_{\{k\}}^*\|_{HS}+L_k\|\hat{x}_{\{k\}}\hat{x}_{\{k\}}^*-\hat{x}\hat{x}^*\|_{HS}.
\end{equation}
On the other hand, the minimizing property of $x^{(j)}$ and the Condition \eqref{eq:1b} imply
\begin{align*}
\|A(x^{(j)})-A(\hat{x})\|_{\ell_p}& \leq 	\|e\|_{\ell_p}+\|A(x^{(j)})-b\|_{\ell_p}	\\
&\leq \|e\|_{\ell_p}+\|A(\hat{x}_{\{j\}})-b\|_{\ell_p}	\\
&\leq 2\|e\|_{\ell_p}+\|A(\hat{x}_{\{j\}})-A(\hat{x}_{\{k\}})\|_{\ell_p}+	\|A(\hat{x}_{\{k\}})-A(\hat{x})\|_{\ell_p}\\
&\leq 2\|e\|_{\ell_p}+\beta_k\|\hat{x}_{\{j\}}\hat{x}_{\{j\}}^*-\hat{x}_{\{k\}}\hat{x}_{\{k\}}^*\|_{HS}+	L_k\|\hat{x}_{\{k\}}\hat{x}_{\{k\}}^*-\hat{x}\hat{x}^*\|_{HS}.
\end{align*}
The latter inequality implies with \eqref{eq:op new 1} that $x^{(j)} = x^{(j,l)}$, for all $l\in \supp(\hat{x}_{\{j\}})$, which concludes the part about the support. 

Next, we shall verify the error bound. We obtain
\begin{align*}
\| x^{(j)}{x^{(j)}}^* -\hat{x}\hat{x}^*\|_{HS}  & \leq \| x^{(j)}{x^{(j)}}^* -\hat{x}_{\{k\}}\hat{x}_{\{k\}}^*\|_{HS}  + \| \hat{x}_{\{k\}}\hat{x}_{\{k\}}^*-\hat{x}\hat{x}^*\|_{HS}\\
&\leq 1/\alpha_k \|  A(x^{(j)}) - A(\hat{x}_{\{k\}})\|_{\ell_p} + \| \hat{x}_{\{k\}}\hat{x}_{\{k\}}^*-\hat{x}\hat{x}^*\|_{HS}\\
& \leq  1/\alpha_k  \|A(x^{(j)})-A(\hat{x})\|_{\ell_p}+1/\alpha_k\|A(\hat{x})-A(\hat{x}_{\{k\}})\|_{\ell_p}\\
& \qquad +\| \hat{x}_{\{k\}}\hat{x}_{\{k\}}^*-\hat{x}\hat{x}^*\|_{HS}\\
& \leq 1/\alpha_k  \|A(\hat{x}_{\{j\}})-A(\hat{x})\|_{\ell_p}+\|e\|_{\ell_p}/\alpha_k+(L_k/\alpha_k+1)\|\hat{x}\hat{x}^*-\hat{x}_{\{k\}}\hat{x}_{\{k\}}^*\|_{HS}\\
&\leq 1/\alpha_k  \|A(\hat{x}_{\{j\}})-A(\hat{x}_{\{k\}})\|_{\ell_p}+1/\alpha_k  \|A(\hat{x}_{\{k\}})-A(\hat{x})\|_{\ell_p}+\|e\|_{\ell_p}/\alpha_k\\
& \qquad+(L_k/\alpha_k+1)\|\hat{x}\hat{x}^*-\hat{x}_{\{k\}}\hat{x}_{\{k\}}^*\|_{HS}\\
&\leq \beta_k/\alpha_k  \|\hat{x}_{\{j\}}\hat{x}_{\{j\}}^*-\hat{x}_{\{k\}}\hat{x}_{\{k\}}^*\|_{HS}+\|e\|_{\ell_p}/\alpha_k\\
&\qquad +(2L_k/\alpha_k+1)\|\hat{x}\hat{x}^*-\hat{x}_{\{k\}}\hat{x}_{\{k\}}^*\|_{HS}\\
& \leq \big(\beta_k/\alpha_k+2L_k/\alpha_k+1  \big)\|\hat{x}\hat{x}^*-\hat{x}_{\{j\}}\hat{x}_{\{j\}}^*\|_{HS}+\|e\|_{\ell_p}/\alpha_k\\
& \leq \big(\beta_k/\alpha_k+2L_k/\alpha_k+1  \big)\sqrt{2}r_{j+1}(\hat{x}) \frac{1}{\sqrt{1-\kappa^2}}+\|e\|_{\ell_p}/\alpha_k\\
& \leq \big(\beta_k/\alpha_k+2L_k/\alpha_k+1  \big) \kappa^j\sqrt{2}r_1(\hat{x}) \frac{1}{\sqrt{1-\kappa^2}}+\|e\|_{\ell_p}/\alpha_k.
\end{align*}
Some rough estimates yield
\begin{equation*}
\frac{\beta_k/\alpha_k+2L_k/\alpha_k+1  }{\sqrt{1-\kappa^2}}\sqrt{2}\leq \sqrt{3} (1+\frac{\beta_k+2L_k}{\alpha_k}),
\end{equation*}
which concludes the proof.
\end{proof}

\begin{remark}
The greedy Algorithm \ref{algo:1} can also be performed in the complex setting. 
The complex version of Theorem \ref{th:new} also holds when recovery of $\pm \hat{x}$ is replaced {  by a complex unit vector} times $\hat{x}$. 
\end{remark}

\subsection{Signal recovery from random measurements}
{  We aim at choosing $\{A_i: i=1, \dots,n\}$ in a suitable random way, so  that the} conditions in Theorem \ref{th:new} are satisfied with high probability. Indeed, the upper bound in \eqref{eq:1b} is always satisfied for some $C>0$, because we are in a finite-dimensional regime. In this section we are interested in one rank matrices $A_i=a_ia_i^*$, for some vector $a_i\in\R^d$, $i=1,\ldots,n$, because then
\begin{equation}\label{eq:model phase}
A(\hat{x})=\big(\hat{x}^*A_1\hat{x},\ldots,\hat{x}^*A_n\hat{x}\big)^* = (|\langle a_1,\hat{x}\rangle|^2,\ldots,|\langle a_n,\hat{x}\rangle|^2)^\top 
\end{equation}
models the phase retrieval problem. 

\subsubsection{Real random measurement vectors}
To check on the assumptions in Theorem \ref{th:new}, we shall {  draw at random} the measurement vectors $\{a_i: i=1,\dots,n\}$  from {  probability distributions} to be characterized next. We say that a random vector $a\in\R^d$ satisfies the \emph{small-ball assumption} if there is a constant $c>0$, such that, for all $z\in\R^d$ and $\varepsilon>0$,
\begin{equation*}
\mathbb{P}\big ( |\langle a,z\rangle |\leq \varepsilon \|z\|\big)  \leq c \varepsilon.
\end{equation*}
Moreover, we say that $a$ is \emph{isotropic} if $\mathbb{E}|\langle a,z\rangle|^2 = \|z\|^2$, for all $z\in\R^d$. The vector $a$ is said to be \emph{$L$-subgaussian} if, for all $z\in\R^d$ and $t\geq 1$, 
\begin{equation*}
\mathbb{P}\big(  |\langle a,z\rangle| \geq  t L\|z\| \big) \leq 2e^{-t^2/2}.
\end{equation*}
Eldar and Mendelson derived the following result:
\begin{theorem}[\cite{Eldar:2012fk}]\label{th:eldar}
Let $\{a_i: i=1,\dots,n\}$ be a set of independent copies of a random vector $a\in\R^d$ that is isotropic, $L$-subgaussian, and satisfies the small-ball assumption. Then there are positive constants $c_1,\ldots,c_4$ such that, for all $t\geq c_1$ with $n\geq  k c_2 t^3 \log(ed/k)$, and for all $k$-sparse $x,y\in\R^d$, 
\begin{equation}\label{eq:eldar etc}
\sum_{i=1}^n \big| |\langle a_i,x\rangle|^2 - |\langle a_i,y\rangle|^2 \big| \geq c_4 \|x-y\| \|x+y\|
\end{equation}
with probability of failure at most $2e^{-k c_3t^2 \log(ed/k)}$.
\end{theorem}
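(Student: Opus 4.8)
Since the estimate of Theorem \ref{th:eldar} is taken verbatim from \cite{Eldar:2012fk}, the ``proof'' in the paper is essentially a citation; here I outline how I would establish it, following Mendelson's \emph{small-ball method}. The first move is purely algebraic: $\abs{\langle a_i,x\rangle}^2-\abs{\langle a_i,y\rangle}^2=\langle a_i,x-y\rangle\,\langle a_i,x+y\rangle$, so setting $u=x-y$ and $v=x+y$ the claim becomes a uniform lower bound $\frac1n\sum_{i=1}^n\abs{\langle a_i,u\rangle\langle a_i,v\rangle}\ge c\,\norm u\,\norm v$ over all pairs $u,v$ supported in a common set of size at most $2k$ (their supports lie in $\operatorname{supp}(x)\cup\operatorname{supp}(y)$). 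The degenerate cases $x=\pm y$ are trivial, so I may assume $u,v\neq 0$, and by joint $2$-homogeneity I may normalize $\norm u=\norm v=1$.

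The easy ingredient is anti-concentration. Applying the small-ball assumption to $z=u$ and to $z=v$ and taking a union bound gives $\mathbb P\big(\abs{\langle a,u\rangle}>\varepsilon,\ \abs{\langle a,v\rangle}>\varepsilon\big)\ge 1-2c\varepsilon$; choosing $\varepsilon=1/(4c)$ produces a universal $\rho=\rho(c)>0$ with $\mathbb P\big(\abs{\langle a,u\rangle\langle a,v\rangle}\ge 2\rho\big)\ge\tfrac12$ for every unit $u,v$. I then pass to the empirical average by a Lipschitz thresholding trick: pick $\psi$ with $\mathbf 1_{[2\rho,\infty)}\le\psi\le\mathbf 1_{[\rho,\infty)}$ and Lipschitz constant $1/\rho$, so that $\frac1n\sum_i\abs{\langle a_i,u\rangle\langle a_i,v\rangle}\ge\rho\cdot\frac1n\sum_i\psi(\abs{\langle a_i,u\rangle\langle a_i,v\rangle})\ge\rho\big(\tfrac12-\mathcal R_n\big)$, where $\mathcal R_n$ is the supremum over the normalized sparse pairs of $\big|\frac1n\sum_i\psi(\abs{\langle a_i,u\rangle\langle a_i,v\rangle})-\mathbb E\,\psi(\abs{\langle a,u\rangle\langle a,v\rangle})\big|$. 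Everything then reduces to showing $\mathcal R_n\le\tfrac14$ off an event of probability at most $2e^{-kc_3t^2\log(ed/k)}$ as soon as $n\ge kc_2t^3\log(ed/k)$.

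Controlling $\mathcal R_n$ is the technical core and the step I expect to be the main obstacle. For a fixed support $S$ with $\abs S\le 2k$, the class $\{\psi(\abs{\langle a,u\rangle\langle a,v\rangle}):\operatorname{supp}u,\operatorname{supp}v\subset S\}$ consists of $[0,1]$-valued functions, and I would bound the corresponding empirical process by combining Bernstein's inequality on a $\delta$-net of the unit sphere of $\mathbb R^{S}$ (cardinality $(C/\delta)^{2k}$) with the Lipschitz dependence of $(u,v)\mapsto\psi(\abs{\langle a,u\rangle\langle a,v\rangle})$, whose modulus is of order $\rho^{-1}\max_i\norm{P_S a_i}^2$. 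Here the $L$-subgaussian hypothesis is what lets me control $\max_i\norm{P_S a_i}$ uniformly with overwhelming probability (and supplies the sub-exponential tails of the products $\langle a,u\rangle\langle a,v\rangle$, which is where the gap between the $t^3$ in the sample size and the $t^2$ in the exponent originates). A union bound over the $\binom{d}{2k}\le(ed/2k)^{2k}$ supports contributes a factor $\exp(2k\log(ed/2k))$, absorbed once $n\gtrsim k\log(ed/k)$. Putting the three steps together gives $\frac1n\sum_i\abs{\langle a_i,u\rangle\langle a_i,v\rangle}\ge\rho/4$ uniformly over the normalized pairs, hence $\sum_{i=1}^n\abs{\langle a_i,x-y\rangle\langle a_i,x+y\rangle}\ge\tfrac{n\rho}{4}\norm{x-y}\norm{x+y}\ge c_4\norm{x-y}\norm{x+y}$, since $n\ge kc_2t^3\log(ed/k)$ is bounded below by an absolute constant. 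The delicate part is thus obtaining the uniform deviation bound for $\mathcal R_n$ with the sharp $k\log(ed/k)$ dimensional scaling and the precise exponential failure probability, which is exactly what is imported from \cite{Eldar:2012fk}.
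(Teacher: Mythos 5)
The paper offers no proof of this statement: Theorem \ref{th:eldar} is quoted verbatim from \cite{Eldar:2012fk}, so there is nothing internal to compare your argument against. That said, your outline is a faithful reconstruction of the small-ball method actually used there: the factorization $\abs{\langle a,x\rangle}^2-\abs{\langle a,y\rangle}^2=\langle a,x-y\rangle\langle a,x+y\rangle$, the reduction by homogeneity to unit vectors supported on a common set of size $2k$, the anti-concentration step via the small-ball assumption and a union bound, and the passage from a one-sample lower bound to the empirical average through a Lipschitz surrogate $\psi$ are all exactly the right moves, and your accounting of where the $t^3$ sample size versus $t^2$ exponent mismatch comes from (sub-exponential tails of the product of two subgaussian marginals) is also correct. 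The one place where your sketch, taken literally, would not deliver the stated constants is the control of $\mathcal R_n$: a $\delta$-net plus Bernstein argument in which the Lipschitz modulus of $(u,v)\mapsto\psi(\abs{\langle a,u\rangle\langle a,v\rangle})$ is bounded by $\rho^{-1}\max_i\norm{P_S a_i}^2$ forces $\delta\lesssim\rho/k$ and hence costs an extra $\log k$ in the entropy, spoiling the sharp $k\log(ed/k)$ scaling; the cited proof instead uses symmetrization, the contraction principle for the Lipschitz map $\psi$, and a chaining bound on the Rademacher complexity of the class indexed by $2k$-sparse unit vectors. Since you explicitly flag this deviation bound as the part imported from \cite{Eldar:2012fk}, the proposal is acceptable as a proof sketch, but be aware that the net-plus-Bernstein route you describe is not quite the mechanism that yields the advertised dimensional dependence.
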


The uniform distribution on the sphere and the Gaussian distribution on $\R^d$ induce random vectors satisfying the assumptions of Theorem \ref{th:eldar}. However, at first glance, the above theorem does not help us directly, because we are seeking for an estimate involving the Hilbert-Schmidt norm. It is remarkable though that 
\begin{equation}\label{eq:HS with pm}
\|xx^*-yy^*\|_{HS} \leq  \|x-y\| \|x+y\|\leq \sqrt{2}\|xx^*-yy^*\|_{HS}.
\end{equation}
The relation \eqref{eq:HS with pm} then yields that also the lower bound in \eqref{eq:1b} is satisfied for any constant $\alpha\leq c_4$, so that Theorem \ref{th:new} can be used for $p=1$. Thus, if the signal $\hat{x}$ is sparse and satisfies decay conditions matching the constants in \eqref{eq:1b}, then Algorithm \ref{algo:1} for $p=1$ recovers $x^{(k)}=\pm \hat{x}$.

\subsubsection{Complex random measurement vectors} 
In the following and at least for the uniform distribution on the sphere, we shall generalize Theorem \ref{th:eldar} to the complex setting, so that the assumptions of Theorem \ref{th:new} hold with high probability. 
%Note that \eqref{eq:eldar etc} does not have a complex equivalent, but we can simply use the Hilbert-Schmidt norm as in \eqref{eq:HS with pm}:
\begin{theorem}\label{th:complex and new}
If $\{a_i: i=1,\dots,n\}$ are independent uniformly distributed vectors on the unit sphere, then there is a constant $\alpha>0$ such that, for all $k$-sparse $x,y\in\C^d$ and $n\geq c_1k\log(ed/k)$,
\begin{equation*}
\sum_{i=1}^n \big| |\langle a_i,x\rangle|^2 - |\langle a_i,y\rangle|^2 \big| \geq \alpha n \|xx^*-yy^*\|_{HS}
\end{equation*}
with probability of failure at most $e^{-nc_2}$.
\end{theorem}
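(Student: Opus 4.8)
The plan is to run Mendelson's small-ball method in the complex setting, working directly with the Hermitian matrix $M:=xx^*-yy^*$ and the real quadratic form $a\mapsto\langle Ma,a\rangle=|\langle a,x\rangle|^2-|\langle a,y\rangle|^2=\langle M,aa^*\rangle_{HS}$. In contrast to the real case, the elementary equivalence \eqref{eq:HS with pm} between $\|x-y\|\|x+y\|$ and $\|xx^*-yy^*\|_{HS}$ breaks down over $\C$ --- already $y=\mathrm{i}x$ gives $M=0$ while $\|x-y\|\|x+y\|=2\|x\|^2$ --- so one cannot merely invoke a complex version of Theorem~\ref{th:eldar}: the Hilbert--Schmidt norm must be carried through the whole argument. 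By degree-$2$ homogeneity of the claim under $(x,y)\mapsto(tx,ty)$, it suffices to show $\tfrac1n\sum_{i=1}^n|\langle Ma_i,a_i\rangle|\ge\alpha$ uniformly over the class $\mathcal M_k:=\{xx^*-yy^*:x,y\ k\text{-sparse},\ \|xx^*-yy^*\|_{HS}=1\}$; every such $M$ is Hermitian, has rank at most $2$, and is supported on a coordinate block of size at most $2k$.

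First I would establish the decisive ingredient, a \emph{dimension-free small-ball estimate}: there are absolute $\xi,p_0>0$ with $\mathbb P\big(|\langle Ma,a\rangle|\ge\xi\big)\ge p_0$ for every $M\in\mathcal M_k$. Writing $a=g/\|g\|$ for a standard complex Gaussian $g\in\C^d$ and using that $a$ is independent of $\|g\|$ while $\langle Ma,a\rangle\|g\|^{2}=g^*Mg$, one gets $\mathbb E[\langle Ma,a\rangle^{2}]=\mathbb E[(g^*Mg)^{2}]/\mathbb E\|g\|^{4}$ and likewise for the fourth moment. The complex Wick/Isserlis calculus gives $\mathbb E[(g^*Mg)^2]=(\trace M)^2+\|M\|_{HS}^2\ge\|M\|_{HS}^2$, and --- crucially because $\operatorname{rank}M\le2$ --- $\mathbb E[(g^*Mg)^4]$ is bounded by an absolute multiple of $\|M\|_{HS}^4$ (for general Hermitian $M$ this moment can be of order $(\trace M)^4\asymp d^{2}\|M\|_{HS}^4$, e.g.\ $M\propto\Id$, so the rank bound is essential). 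Together with $\mathbb E\|g\|^{2m}=d(d+1)\cdots(d+m-1)$, the Paley--Zygmund inequality then yields $\mathbb P\big(\langle Ma,a\rangle^2\ge\tfrac12\mathbb E[\langle Ma,a\rangle^2]\big)\ge\tfrac14(\mathbb E[\langle Ma,a\rangle^2])^2/\mathbb E[\langle Ma,a\rangle^4]\ge p_0$. (On the unit sphere this produces $\xi\asymp1/d$; rescaling to the isotropic sphere of radius $\sqrt d$ --- the normalization under which Theorem~\ref{th:eldar} is phrased, and the one needed for the next step to close --- makes both $\xi$ and the final $\alpha$ dimension-free, so I adopt it.) With the small-ball bound secured, I peel off a $(2/\xi)$-Lipschitz ramp $\psi:\mathbb R\to[0,1]$ with $\mathbf 1_{\{t\ge\xi\}}\ge\psi(t)\ge\mathbf 1_{\{t\ge\xi/2\}}$; since $|\langle Ma,a\rangle|\ge\xi\psi(|\langle Ma,a\rangle|)$, everything reduces to showing that $Z:=\sup_{M\in\mathcal M_k}\big|\tfrac1n\sum_{i}\psi(|\langle Ma_i,a_i\rangle|)-\mathbb E\,\psi(|\langle Ma,a\rangle|)\big|$ is at most $\xi p_0/2$ with the asserted probability, for then $\tfrac1n\sum_i|\langle Ma_i,a_i\rangle|\ge\xi(\mathbb E\,\psi-Z)\ge\xi p_0/2=:\alpha$ uniformly.

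To control $Z$ I would symmetrize and apply the contraction principle twice --- to the Lipschitz map $\psi$ and then to $|\cdot|$ --- so that $\mathbb E Z\lesssim\tfrac1\xi\,\mathbb E\sup_{M\in\mathcal M_k}|\langle M,W\rangle_{HS}|$ with $W:=\tfrac1n\sum_i\epsilon_i a_ia_i^*$ and $\epsilon_i$ independent Rademacher signs. Since each $M\in\mathcal M_k$ is Hermitian of rank $\le2$, unit Hilbert--Schmidt norm, and supported on some $|S|\le2k$ block, $|\langle M,W\rangle_{HS}|\le\sqrt2\,\|W_S\|$ for the principal submatrix $W_S$ of $W$ indexed by $S$, so it remains to bound $\mathbb E\max_{|S|\le2k}\|W_S\|$. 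This is the standard restricted-operator-norm estimate: take a $\tfrac14$-net of each block-sphere (cardinality $9^{O(k)}$), union over the $\binom{d}{2k}\le(ed/2k)^{2k}$ blocks, and for fixed unit $v$ apply Bernstein's inequality to $v^*Wv=\tfrac1n\sum_i\epsilon_i|\langle a_i,v\rangle|^2$, an average of independent, mean-zero, $O(1)$-subexponential variables, to get $\mathbb P\big(\max_{|S|\le2k}\|W_S\|\ge t\big)\le\exp\big(Ck\log(ed/k)-cn\min(t,t^2)\big)$; integrating this tail (and using $n\ge c_1k\log(ed/k)$ with $c_1$ large) makes $\mathbb E\max_{|S|\le2k}\|W_S\|$, hence $\mathbb E Z$, as small as needed. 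Finally $Z$ has the bounded-differences property --- replacing one $a_i$ changes it by $\le1/n$ since $\psi\le1$ --- so McDiarmid's inequality upgrades the in-expectation bound to $Z\le\xi p_0/2$ with probability $\ge1-e^{-c_2n}$; undoing the normalization then gives $\sum_i\big||\langle a_i,x\rangle|^2-|\langle a_i,y\rangle|^2\big|\ge\alpha n\|xx^*-yy^*\|_{HS}$ for all $k$-sparse $x,y$.

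I expect the main obstacle to be the small-ball estimate. Proving \emph{dimension-free} anti-concentration for the \emph{indefinite} quadratic form $\langle Ma,a\rangle$ on the complex sphere rests entirely on the fourth-moment bound $\mathbb E[(g^*Mg)^4]=O(\|M\|_{HS}^4)$, which holds only because $M=xx^*-yy^*$ has rank at most $2$; getting the complex Wick calculus right and keeping all of the bookkeeping in the Hilbert--Schmidt norm, rather than in the quantity $\|x-y\|\|x+y\|$ which is useless over $\C$, is the delicate part. The restricted-operator-norm estimate of the last step is routine but needs the Bernstein --- not Hoeffding --- bound, because the summands $|\langle a_i,v\rangle|^2$ are only subexponential, not bounded.
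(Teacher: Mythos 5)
Your proposal is correct in substance, but it takes a genuinely different route from the paper. The paper's proof is essentially a citation-plus-union-bound: it invokes the results of Cand\`es--Strohmer--Voroninski \cite{Candes:uq} both for the fixed-pair estimate \eqref{eq:not uniform yet} and for its uniform version over a fixed $2k$-dimensional coordinate subspace, and then union-bounds over the $\binom{d}{2k}\leq(ed/(2k))^{2k}$ such subspaces; all of the probabilistic work is outsourced. You instead run the whole argument from scratch in the style of Mendelson's small-ball method: a Paley--Zygmund anti-concentration bound for the indefinite quadratic form $a\mapsto\langle Ma,a\rangle$ (dimension-free precisely because $M=xx^*-yy^*$ has rank at most $2$, which caps $\mathbb{E}[(g^*Mg)^4]$ by a multiple of $\|M\|_{HS}^4$), followed by symmetrization, contraction, a restricted-operator-norm bound for the Rademacher-signed empirical covariance via nets, Bernstein, and the same union over $2k$-blocks, and finally McDiarmid. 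What your version buys is self-containedness and an explicit explanation of \emph{why} the complex case goes through --- you correctly identify that $\|x-y\|\,\|x+y\|$ degenerates over $\C$ (e.g.\ $y=\mathrm{i}x$) and that the Hilbert--Schmidt norm must be carried throughout, which is exactly the point of the paper's relation \eqref{eq:HS with pm} and the remark following the theorem; what the paper's version buys is brevity. Two small blemishes in your write-up, neither a real gap: the sandwich for the ramp is written backwards (as stated, $\mathbf{1}_{\{t\geq\xi\}}\geq\psi(t)\geq\mathbf{1}_{\{t\geq\xi/2\}}$ is unsatisfiable; you want $\mathbf{1}_{\{t\geq\xi\}}\leq\psi(t)\leq\mathbf{1}_{\{t\geq\xi/2\}}$, with the lower bound feeding $\mathbb{E}\psi\geq p_0$ and the upper bound feeding $|u|\geq\tfrac{\xi}{2}\psi(|u|)$), and your observation that the constant $\alpha$ is only dimension-free under the isotropic normalization $\mathbb{E}\,aa^*=I$ (radius-$\sqrt{d}$ sphere) is a fair criticism of the theorem's phrasing rather than a defect of your proof --- the paper inherits that normalization silently from \cite{Candes:uq}.
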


\begin{proof}
For fixed $x,y\in\R^d$, the results in \cite{Candes:uq} imply that there are constants $c_1, c>0$ such that, for all $t>0$, 
\begin{equation}\label{eq:not uniform yet}
\sum_{i=1}^n\big| |\langle a_i,x\rangle|^2 - |\langle a_i,y\rangle|^2\big|\geq 1/\sqrt(2)(c_1-t)n\|xx^*-yy^*\|_{HS}
\end{equation}
with probability of failure at most $2e^{-n c t^2}$. If both $x$ and $y$ are $k$-sparse, then the union of their supports induces an at most $2k$-dimensional coordinate subspace, so that also $xx^*-yy^*$ can be reduced to a $2k\times 2k$ matrix, by eliminating rows and columns that do not belong to the indices of the subspace. Results in \cite{Candes:uq} can be used to derive that the estimate \eqref{eq:not uniform yet} holds uniformly for elements in this subspace when $n\geq c_3 t^{-2}\log(t^{-1})2k$, for some constant $c_3>0$. There are at most $\binom{d}{2k}\leq (ed/(2k))^{2k}$ many of such coordinate subspaces, see also \cite{Baraniuk:2008fk}. Therefore, {  by a union bound}, the probability of failure is at most
\begin{equation*}
(ed/(2k))^{2k}2e^{-n c t^2} =  2e^{-n\big(ct^2-\frac{2k\log(ed/(2k))}{n}\big)}.
\end{equation*}
Thus, if also $n\geq \frac{1}{ct^2-c_2}\log(ed/2k)2k$ with $ct^2-c_2>0$, then we have the desired result.
\end{proof}

\begin{remark}
We want to point out that the use of the term $\|x-y\|\|x+y\|$ limits Theorem \ref{th:eldar} to the real setting. Our observation \eqref{eq:HS with pm} was the key to derive the analog result in the complex setting. 
 \end{remark}

\subsubsection{Rank-$m$ projectors as measurements} 
 A slightly more general phase retrieval problem was discussed in \cite{Bachoc:2012fk}, where the measurement $A$ in \eqref{eq: def A Ai}  is given by $A_i=\frac{d}{m}P_{V_i}$, $i=1,\ldots,n$, and each $P_{V_i}$ is an orthogonal projector onto an $m$-dimensional linear subspace $V_i$ of $\R^d$. The set of all $m$-dimensional linear subspaces $\mathcal{G}_{m,d}$ is a manifold endowed with the standard normalized Haar measure $\sigma_m$:
 \begin{theorem}[\cite{Bachoc:2012fk}]\label{th:not sparse bachoc}
 There is a constant $u_m>0$, only depending on $m$, such that the following holds: for $0<r<1$ fixed, there exist constants $c(r),C(r)>0$, such that, for all $n\geq c(r)d$ and $\{V_j: j=1,\dots n\} \subset\mathcal{G}_{m,d}$ independently chosen random subspaces with identical distribution $\sigma_m$,  the inequality 
 \begin{equation}\label{eq:mine in 3}
\sum_{i=1}^n \big| x^*A_ix-y^*A_iy \big|
%\|F(X)\|_{\ell_1} 
\geq u_m (1-r)n\|xx^*-yy^*\|_{2},
\end{equation}
for all $x,y\in\R^d$, holds with probability of failure at most $e^{-C(r)n} $.  
 \end{theorem}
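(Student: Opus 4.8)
The plan is to reduce the statement to a dimension-robust anti-concentration (small-ball) estimate for the random linear functionals $M\mapsto\langle P_V,M\rangle_{HS}$ and then run a standard Mendelson-type argument: an empirical small-ball lower bound for each fixed matrix, combined with a union bound over a net. The starting point is that, since every $P_{V_i}$ is a symmetric idempotent,
\begin{equation*}
x^*A_ix - y^*A_iy \;=\; \tfrac{d}{m}\,\bigl\langle P_{V_i},\,xx^*-yy^*\bigr\rangle_{HS},
\end{equation*}
so, writing $H:=xx^*-yy^*$ --- a symmetric matrix of rank at most two, with at most one positive and one negative eigenvalue --- the assertion becomes $\sum_{i=1}^n|\langle P_{V_i},H\rangle_{HS}|\ge\tfrac{m}{d}u_m(1-r)\,n\|H\|_2$. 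By homogeneity one may assume $\|H\|_2=1$, in which case $\|H\|_2$ and $\|H\|_{HS}$ differ only by the factor $\sqrt2$, so it is immaterial which appears in the final bound.

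The heart of the matter is the following lemma: \emph{there are constants $\tau_m,p_m>0$ depending only on $m$ such that every symmetric $H$ of rank at most two with $\|H\|_2=1$ satisfies $\mathbb{P}_{V\sim\sigma_m}\bigl(|\langle P_V,H\rangle_{HS}|\ge\tau_m\tfrac{m}{d}\bigr)\ge p_m$.} To prove it I would use the $O(d)$-invariance of $\sigma_m$ to reduce to $H=\lambda_1 e_1e_1^*-\lambda_2 e_2e_2^*$ with $\lambda_1,\lambda_2\ge0$ and $\max(\lambda_1,\lambda_2)=1$, so that $\langle P_V,H\rangle_{HS}=\lambda_1\|P_Ve_1\|^2-\lambda_2\|P_Ve_2\|^2$. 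The law of this quantity is governed by the pair $(\|P_Ve_1\|^2,\|P_Ve_2\|^2)$, whose marginals are $\mathrm{Beta}(m/2,(d-m)/2)$ with common mean $m/d$ and variance $\tfrac{m(d-m)}{d^2(d+2)}$, whose fourth central moments are $\lesssim_m d^{-4}$, and which are only weakly (negatively) correlated because $\sum_{j=1}^d\|P_Ve_j\|^2=m$. A Paley--Zygmund argument then exhibits an event of probability bounded below by a constant $p_m>0$ on which $\|P_Ve_1\|^2$ exceeds, and $\|P_Ve_2\|^2$ falls short of, their common mean by a fixed multiple of the standard deviation; on that event the larger of the two terms $\lambda_j\|P_Ve_j\|^2$ dominates, forcing $|\langle P_V,H\rangle_{HS}|\ge\tau_m\tfrac{m}{d}$. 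The one genuine difficulty, and the step I expect to be the main obstacle, is that all of these moment computations --- and hence $\tau_m,p_m$ --- must be controlled uniformly in the ambient dimension $d$: this is straightforward in the regime $d/m$ large, and the remaining bounded range of ratios $d/m$ is handled by a compactness argument using that $\langle P_V,H\rangle_{HS}$ cannot vanish almost surely for $H\neq0$.

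Granting the lemma, the rest is routine. Fix $H$ with $\|H\|_2=1$ and set $E_i:=\{|\langle P_{V_i},H\rangle_{HS}|\ge\tau_m\tfrac{m}{d}\}$; the indicators $\mathbf1_{E_i}$ are i.i.d.\ Bernoulli of parameter at least $p_m$, so Hoeffding's inequality gives $\tfrac1n\sum_i\mathbf1_{E_i}\ge p_m/2$ with probability of failure at most $e^{-np_m^2/2}$, whence
\begin{equation*}
\sum_{i=1}^n|x^*A_ix-y^*A_iy| \;=\; \tfrac{d}{m}\sum_{i=1}^n|\langle P_{V_i},H\rangle_{HS}| \;\ge\; \tfrac{\tau_m p_m}{2}\,n\|H\|_2
\end{equation*}
on that event. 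To upgrade this to a bound valid simultaneously for all $x,y$, use homogeneity to restrict to the compact set $\mathcal{K}:=\{xx^*-yy^*:\|xx^*-yy^*\|_2=1\}$, which lies in a variety of dimension $O(d)$ and hence admits a Frobenius $\varepsilon$-net $\mathcal N$ with $\log|\mathcal N|\lesssim d\log(C/\varepsilon)$. Replacing the indicator by a $\tfrac1\tau$-Lipschitz surrogate $\phi$ with $\mathbf1_{[t\ge2\tau]}\le\phi(t)\le\mathbf1_{[t\ge\tau]}$, where $\tau=\tau_m\tfrac{m}{d}$ (after halving $\tau_m$), makes $H\mapsto\tfrac1n\sum_i\phi(|\langle P_{V_i},H\rangle_{HS}|)$ an $O(d)$-Lipschitz function on $\mathcal K$; applying the Hoeffding bound at each net point at scale $\varepsilon\asymp_m d^{-1}$ and transferring to general $H\in\mathcal K$ through the Lipschitz estimate, a union bound yields the desired inequality on all of $\mathcal K$ with probability of failure at most $(C'd)^{O(d)}e^{-c_m n}\le e^{-C(r)n}$ once $n\ge c(r)d$ (a standard chaining or VC-dimension refinement removes the superfluous logarithmic factor if one insists on exactly $n\gtrsim d$). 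The parameter $r$ enters only through the slack between the produced constant $\tfrac{\tau_m p_m}{4}$ and the target $u_m(1-r)$, and since $\|H\|_{HS}\ge\|H\|_2$ the inequality follows with either norm.
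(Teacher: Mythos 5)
This theorem is imported verbatim from \cite{Bachoc:2012fk}; the paper contains no proof of it, so there is nothing internal to compare against. Judged on its own, your sketch reproduces the strategy of the cited source (which in turn follows \cite{Candes:uq}): a one-sample small-ball bound for $\langle P_V, xx^*-yy^*\rangle_{HS}$, then concentration for fixed $H=xx^*-yy^*$, then a union bound over a net of the normalized rank-two matrices. The reduction to $H=\lambda_1e_1e_1^*-\lambda_2e_2e_2^*$ with $\lambda_1,\lambda_2\ge 0$ is legitimate (the product of the two nonzero eigenvalues of $xx^*-yy^*$ equals $-(\|x\|^2\|y\|^2-\langle x,y\rangle^2)\le 0$), the $\mathrm{Beta}(m/2,(d-m)/2)$ moments you quote are correct, and since the theorem only asserts the existence of \emph{some} $u_m>0$, it is fine that your argument produces its own constant $\tau_mp_m/4$ and treats the factor $(1-r)$ as slack.

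Two steps are genuinely incomplete. First, your small-ball lemma hinges on the \emph{joint} event that $\|P_Ve_1\|^2$ exceeds and $\|P_Ve_2\|^2$ falls short of the common mean by a fixed multiple of the standard deviation; Paley--Zygmund gives each marginal event constant probability, but not their intersection, and ``weakly negatively correlated'' is a heuristic rather than an argument. This is repairable, and more cleanly than you propose: set $Z=\lambda_1\|P_Ve_1\|^2-\lambda_2\|P_Ve_2\|^2$ and apply Paley--Zygmund to $Z^2$. Exchangeability together with $\sum_{j}\|P_Ve_j\|^2=m$ gives $\mathrm{Cov}(\|P_Ve_1\|^2,\|P_Ve_2\|^2)=-\mathrm{Var}(\|P_Ve_1\|^2)/(d-1)<0$, hence $\mathbb{E}Z^2\ge \mathrm{Var}(Z)\ge(\lambda_1^2+\lambda_2^2)\mathrm{Var}(\|P_Ve_1\|^2)\ge c_m d^{-2}$, while $\mathbb{E}Z^4\le 8(\mathbb{E}\|P_Ve_1\|^8+\mathbb{E}\|P_Ve_2\|^8)\le C_m d^{-4}$; Paley--Zygmund then yields $\mathbb{P}\big(|Z|\ge \tau_m m/d\big)\ge p_m$ uniformly in $d>m$, with no case distinction on which $\lambda_j$ is larger. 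Second, your net argument does not deliver the stated sample complexity: the surrogate functional is only $O(d)$-Lipschitz, forcing mesh $\varepsilon\asymp d^{-1}$ and $\log|\mathcal N|\asymp d\log d$, so the union bound needs $n\gtrsim d\log d$ rather than $n\ge c(r)d$. Removing that logarithm is not a one-line ``standard chaining refinement''; it requires either Mendelson's small-ball method or the specific Lipschitz/deviation bookkeeping carried out in \cite{Candes:uq,Bachoc:2012fk}. As written, your argument proves the theorem with $n\gtrsim d\log d$; to reach the stated threshold you must either supply that step or, as the paper does, cite it.
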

 
Since the rank of $xx^*-yy^*$ is at most $2$, its Hilbert-Schmidt norm is bounded by $\sqrt{2}$ times the operator norm. Thus, we have the lower $\ell_1$-bound in \eqref{eq:1b} for $k=d$ when $n\geq c(r)d$, and the random choice of subspaces (and hence orthogonal projectors) enables us to apply Theorem \ref{th:new}. The result for $k$-sparse signals is a consequence of Theorem \ref{th:not sparse bachoc}:
\begin{corollary}\label{cor:new}
There is a constant $u_m>0$, only depending on $m$, such that the following holds: for $0<r<1$ fixed, there exist constants $c(r),C(r)>0$, such that, for all $n\geq c(r)k\log(ed/k)$ and $\{V_j: j=1,\dots n\}\subset\mathcal{G}_{m,d}$ independently chosen random subspaces with identical distribution $\sigma_m$,  the inequality 
 \begin{equation}\label{eq:in theorem new again}
\sum_{i=1}^n \big| x^*A_ix-y^*A_iy \big|
%\|F(\hat{x}\hat{x}^*) -F(yy^*)  \|_{\ell_1} 
\geq u_m (1-r)n\|xx^*-yy^*\|_{HS},
\end{equation}
for all $k$-sparse $x,y\in\R^d$, holds with probability of failure at most $e^{-C(r)n}$.  
\end{corollary}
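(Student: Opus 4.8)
The plan is to deduce the $k$-sparse estimate \eqref{eq:in theorem new again} from the ``dense'' estimate \eqref{eq:mine in 3} of Theorem~\ref{th:not sparse bachoc} by localizing to low-dimensional coordinate subspaces and then taking a union bound, in complete analogy with the proof of Theorem~\ref{th:complex and new}. I would begin with the elementary norm comparison already used in the discussion preceding the corollary: every matrix $M=xx^*-yy^*$ has rank at most $2$, so $\|M\|_{HS}\leq\sqrt{2}\,\|M\|_2$, i.e.\ $\|M\|_2\geq\|M\|_{HS}/\sqrt{2}$. Hence any lower bound of the form $\sum_{i=1}^n|x^*A_ix-y^*A_iy|\geq u_m(1-r)n\|M\|_2$ automatically upgrades to $\sum_{i=1}^n|x^*A_ix-y^*A_iy|\geq\frac{u_m}{\sqrt{2}}(1-r)n\|M\|_{HS}$, so it is enough to prove the operator-norm version of \eqref{eq:in theorem new again} restricted to $k$-sparse $x,y$.

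Next comes the localization. Fix an index set $\Lambda\subset\{1,\dots,d\}$ with $|\Lambda|\leq 2k$ and set $W=\spann\{e_i:i\in\Lambda\}$. If $x,y\in\R^d$ are $k$-sparse, then $\supp(x)\cup\supp(y)$ lies in some such $\Lambda$, and both $x^*A_ix-y^*A_iy$ and $\|xx^*-yy^*\|_2$ depend only on $x,y$ and on the $A_i$ viewed as operators on $W$. I would then re-run the argument behind Theorem~\ref{th:not sparse bachoc} \emph{inside} the $|\Lambda|$-dimensional space $W$: the normalization $A_i=\frac{d}{m}P_{V_i}$ still yields $\mathbb{E}\,x^*A_ix=\|x\|^2$ for $x\in W$, so the same concentration and $\varepsilon$-net argument gives, for a \emph{fixed} $\Lambda$, the uniform bound $\sum_{i=1}^n|x^*A_ix-y^*A_iy|\geq u_m(1-r)n\|xx^*-yy^*\|_2$ for all $x,y\in W$, with probability of failure at most $e^{-C(r)n}$, provided $n\geq c(r)\cdot 2k$ (the dimension of $W$ now playing the role that $d$ played in Theorem~\ref{th:not sparse bachoc}).

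The last step is a union bound over coordinate subspaces. There are at most $\binom{d}{2k}\leq(ed/(2k))^{2k}$ subsets $\Lambda$ of size $2k$ (cf.~\cite{Baraniuk:2008fk}), so the probability that the uniform bound fails on at least one associated subspace $W$ is at most $(ed/(2k))^{2k}e^{-C(r)n}=e^{-n(C(r)-2k\log(ed/(2k))/n)}$. Choosing $n\geq c'(r)\,k\log(ed/k)$ with $c'(r)$ large enough forces the exponent to be at least $\frac{1}{2}C(r)n$, and since every $k$-sparse pair $x,y$ is covered by one of these subspaces, we obtain \eqref{eq:in theorem new again} for all $k$-sparse $x,y$ with probability of failure at most $e^{-C'(r)n}$ (after renaming constants and replacing $u_m$ by $u_m/\sqrt{2}$). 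The extra $\log(ed/k)$ factor in the sampling requirement is exactly the price of this union bound, matching the hypothesis of the corollary.

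The step I expect to be the main obstacle is the \emph{localization}: Theorem~\ref{th:not sparse bachoc} cannot be applied as a black box inside $W$, because the restricted operators $P_WP_{V_i}P_W$ are \emph{not} orthogonal projectors onto uniformly random $m$-dimensional subspaces of $W$ --- their eigenvalues are the squared cosines of the principal angles between $V_i$ and $W$ and are not all equal to $1$. One therefore has to go into the proof of \cite{Bachoc:2012fk} and check that its probabilistic ingredients (the first-moment computation and the subgaussian tail/concentration bounds driving the net argument) are inherited by this restriction, and that the constants $u_m,c(r),C(r)$ can be taken uniform over all coordinate subspaces of a given dimension. This is the precise analogue of the passage ``Results in \cite{Candes:uq} can be used to derive that the estimate \eqref{eq:not uniform yet} holds uniformly for elements in this subspace'' in the proof of Theorem~\ref{th:complex and new}.
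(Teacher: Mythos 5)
Your proposal is correct and follows essentially the same route as the paper: reduce to a fixed $2k$-dimensional coordinate subspace, use the rank-$2$ bound $\|xx^*-yy^*\|_{HS}\leq\sqrt{2}\,\|xx^*-yy^*\|_2$, invoke the uniform version of Theorem~\ref{th:not sparse bachoc} on that subspace for $n\geq c(r)2k$, and finish with a union bound over the $\binom{d}{2k}\leq (ed/(2k))^{2k}$ coordinate subspaces exactly as at the end of the proof of Theorem~\ref{th:complex and new}. The localization obstacle you flag is resolved the way the paper implicitly does it: the pointwise concentration estimate of \cite{Bachoc:2012fk} for fixed $x,y$ is dimension-free, and the low dimension of the coordinate subspace enters only through the size of the $\varepsilon$-net, so one never needs to interpret the restricted operators $P_WP_{V_i}P_W$ as projectors onto random subspaces of $W$.
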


\begin{proof}
The lower bound on $n$ in Theorem \ref{th:not sparse bachoc} is not needed when the vectors $x$ and $y$ are fixed in \eqref{eq:mine in 3}, cf.~\cite{Bachoc:2012fk}. If both $x,y$ are supported in one fixed coordinate subspace of dimension $2k$, then the proof of Theorem \ref{th:not sparse bachoc} in \cite{Bachoc:2012fk}, see also \cite{Candes:uq}, yields that \eqref{eq:in theorem new again} holds uniformly for this subspace provided $n\geq c(r)2k$.

Similar to the proof of Theorem \ref{th:complex and new}, we shall use Theorem \ref{th:not sparse bachoc} with a $2k$ coordinate subspace and then apply a union bound by counting the number of such subspaces. Indeed, since $xx^*-yy^*$ can be treated as a $2k\times 2k$ matrix by removing zero rows and columns, Theorem \ref{th:not sparse bachoc} implies \eqref{eq:in theorem new again} for all $x,y\in\R^d$ supported in a fixed coordinate subspace of dimension $2k$ with probability of failure at most $e^{-C(r)n}$ when $n\geq c(r)2k$. Again, we have used that $xx^*-yy^*$ has rank at most two, so that the Hilbert-Schmidt norm is bounded by $\sqrt{2}$ times the operator norm. The remaining part can be copied from the end of the proof of Theorem \ref{th:complex and new}.
\end{proof}

\begin{remark}
It is mentioned in \cite{Bachoc:2012fk} already that Theorem \ref{th:not sparse bachoc} also holds in the complex setting. Therefore, Corollary \ref{cor:new} has a complex version too, and our present results hold for complex rank-$m$ projectors. 
\end{remark}

\subsubsection{Nearly isometric random maps}
To conclude the discussion on random measurements for phase retrieval, we shall generalize some results from \cite{Recht:2010fk} to sparse vectors. Also, we want to present a framework, in which $\{A_i: i=1, \dots,n\}$ in \eqref{eq:model phase} can be chosen as a set of independent random matrices with independent Gaussian entries. Let $\mathcal{A}$ be a random map that takes values in linear maps from $\R^{d\times d}$ to $\R^n$. Then $\mathcal{A}$ is called nearly isometrically distributed if, for all $X\in\R^{d\times d}$, 
\begin{equation}\label{eq:cond 1 again}
\mathbb{E} \|\mathcal{A}(X)\|^2 = \|X\|_{HS}^2,
\end{equation}
and, for all $0<\epsilon<1$, we have
\begin{equation}\label{eq:def iso}
(1-\epsilon) \|X\|_{HS}^2\leq  \|\mathcal{A}(X)\|^2\leq (1+\epsilon)  \|X\|_{HS}^2
%\mathbb{P}\big( \big| \|\mathcal{A}(X)\|^2- \|X\|_{HS}^2\big| \geq \varepsilon \|X\|_{HS}^2  \big)\leq 2 e^{-\frac{n}{2}(\varepsilon^2/2 - \varepsilon^3/3)}.
\end{equation}
with probability of failure at most $2 e^{-n f(\epsilon)}$,  where $f:(0,1)\rightarrow \R_{>0}$ is an increasing function.
%and, for all $t>0$, we have
%\begin{equation}\label{eq:control on M}
%\mathbb{P}\big(  \|\mathcal{A}\|\geq 1+d/\sqrt{n}+t \big)\leq  e^{-\gamma n t^2},
%\end{equation}
%for some constant $\gamma>0$. 
%

Note that the definition of nearly isometries  in \cite{Recht:2010fk} is more restrictive, but we can find several examples there. For instance, if $\{A_i: i=1, \dots,n\}$ in \eqref{eq:model phase} are independent matrices with independent standard Gaussian entries, then the map
\begin{equation*}
\mathcal{A}:\R^{d\times d}\rightarrow \R^n,\qquad \mathcal{A}(X):=\frac{1}{\sqrt{n}}\begin{pmatrix}
\trace(A_1^*X) \\
\vdots\\
\trace(A_n^*X)
\end{pmatrix}
\end{equation*}
is nearly isometrically distributed, see \cite{Dasgupta:2003fk,Recht:2010fk}. 

The following theorem fits into our setting, and it should be mentioned that we will only use \eqref{eq:cond 1 again} and \eqref{eq:def iso} for symmetric matrices $X$ of rank at most $2$. So, we could even further weaken the notion of nearly isometric distributions accordingly.
% and to keep consistent with \eqref{eq:def iso}, we shall use squares in our estimates:
%\begin{theorem}[\cite{Recht:2010fk}]\label{th:Rechts}
%Fix $0<\delta<1$. If $\mathcal{A}$ is a nearly isometric random map from $\R^{d\times d}$ to $\R^n$ and $A(x):=\mathcal{A}(xx^*)$, then there are constants $c_1,c_2>0$, such that, with probability of failure at most $e^{-c_1 n}$,
%\begin{equation*}
%(1-\delta)\|\hat{x}\hat{x}^*-yy^*\|_{HS} \leq \|A(\hat{x}) -A(y)\|_{\ell_2} \leq (1+\delta) \|\hat{x}\hat{x}^*-yy^*\|_{HS}
%\end{equation*}
%holds for all $\hat{x},y\in\R^d$, whenever $n\geq c_0 d \log(d)$. 
%\end{theorem}
%The original theorem in \cite{Recht:2010fk} is more general, but the above version is sufficient for our purposes. 
%As before, we can address sparse vectors $\hat{x}$ and $y$ by using Theorem \ref{th:Rechts} on subspaces and apply a union bound:
%\begin{corollary}\label{th:Rechts meins}
%Fix $0<\delta<1$. If $\mathcal{A}$ is a nearly isometric random map from $\R^{d\times d}$ to $\R^n$ and $A(x):=\mathcal{A}(xx^*)$, then there are constants $c_1,c_2>0$, such that, with probability of failure at most $e^{-c_1 n}$,
%\begin{equation*}
%(1-\delta)\|\hat{x}\hat{x}^*-yy^*\|_{HS} \leq \|A(\hat{x}) -A(y)\|_{\ell_2} \leq (1+\delta)\|\hat{x}\hat{x}^*-yy^*\|_{HS}
%\end{equation*}
%holds for all $k$-sparse $\hat{x},y\in\R^d$, whenever $n\geq c_0 k \max(\log(ed/k),\log(k))$. 
%\end{corollary}
\begin{theorem}\label{th:again}
Fix $1\leq k\leq d$. If $\mathcal{A}$ is a nearly isometric random map from $\R^{d\times d}$ to $\R^n$ and $A(x):=\mathcal{A}(xx^*)$, then there are constants $c_1,c_2>0$, such that, uniformly for all $0<\delta<1$ and all $k$-sparse $x,y\in\R^d$, 
\begin{equation}\label{eq:final new I hope}
(1-\delta)\|xx^*-yy^*\|_{HS} \leq \|A(x) -A(y)\| \leq (1+\delta)\|xx^*-yy^*\|_{HS} 
\end{equation}
with probability of failure at most $2e^{-n\big(f(\delta/2)  -\frac{4k+2}{n}\log(65/\delta) -\frac{2k}{n}\log(ed/2k)\big)}$.
\end{theorem}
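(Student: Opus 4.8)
The plan is to reduce \eqref{eq:final new I hope} to the hypothesis \eqref{eq:def iso} by exploiting linearity and then to pay for uniformity by a covering argument over coordinate subspaces. Since $\mathcal A$ is linear, $A(x)-A(y)=\mathcal A(xx^*-yy^*)$, so the claimed inequality is exactly the near-isometry \eqref{eq:def iso} applied to the symmetric matrix $X=xx^*-yy^*$, which has rank at most $2$. When $x,y$ are $k$-sparse, $X$ is supported on the principal submatrix indexed by $T:=\supp(x)\cup\supp(y)$, a coordinate set of size at most $2k$. Hence it suffices to prove, for each fixed coordinate subspace of dimension $2k$ (abusing notation, also called $T$), that $(1-\delta)\|X\|_{HS}\le\|\mathcal A(X)\|\le(1+\delta)\|X\|_{HS}$ holds simultaneously for every symmetric $X$ of rank at most $2$ supported on $T\times T$, and then to union-bound over the $\binom{d}{2k}\le(ed/2k)^{2k}$ such subspaces. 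By homogeneity we may restrict to $\|X\|_{HS}=1$.

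I would then build a net of the set of admissible $X$. Using the spectral decomposition $X=\sigma_1 w_1w_1^*+\sigma_2 w_2w_2^*$ with $w_1,w_2$ orthonormal in the $2k$-dimensional space $T$ and $(\sigma_1,\sigma_2)$ on the unit circle, the product of three Euclidean $\varepsilon'$-nets — one for each unit vector $w_i$ (cardinality at most $(1+2/\varepsilon')^{2k}$) and one for $(\sigma_1,\sigma_2)$ (cardinality at most $(1+2/\varepsilon')^{2}$) — gives a set $\mathcal N_T$ with $|\mathcal N_T|\le(c_0/\varepsilon')^{4k+2}$ for an absolute constant $c_0$, such that every admissible $X$ lies within Hilbert--Schmidt distance $\varepsilon'$ of $\mathcal N_T$ (using $\|ww^*-w'w'^*\|_{HS}\le\sqrt2\,\|w-w'\|$ for unit vectors). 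Applying \eqref{eq:def iso} with $\epsilon=\delta/2$ to each of the $|\mathcal N_T|$ elements and taking a union bound shows that, off an event of probability at most $2(c_0/\varepsilon')^{4k+2}e^{-nf(\delta/2)}$, one has $\big|\,\|\mathcal A(Z)\|-1\,\big|\le\delta/2$ for every $Z\in\mathcal N_T$.

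Next I would upgrade from the net to the whole set in the standard way. Let $M:=\sup\{\|\mathcal A(X)\|:X\ \text{symmetric},\ \text{rank at most } 2,\ \text{supported on } T\times T,\ \|X\|_{HS}\le1\}$, which is finite and attained by compactness, say at $X^\star$; choose $Z\in\mathcal N_T$ with $\|X^\star-Z\|_{HS}\le\varepsilon'$. The difference $X^\star-Z$ is symmetric of rank at most $4$, and splitting its eigendecomposition into two pairs writes it as $Y_1+Y_2$ with $Y_i$ symmetric of rank at most $2$, supported on $T\times T$, and $\|Y_i\|_{HS}\le\|X^\star-Z\|_{HS}\le\varepsilon'$; therefore $\|\mathcal A(X^\star-Z)\|\le\|\mathcal A(Y_1)\|+\|\mathcal A(Y_2)\|\le2\varepsilon' M$ by the definition of $M$. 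This gives $M\le(1+\delta/2)+2\varepsilon' M$, i.e. $M\le(1+\delta/2)/(1-2\varepsilon')$; choosing $\varepsilon'$ to be a small fixed multiple of $\delta$ (which fixes the ratio $c_0/\varepsilon'=65/\delta$ appearing in the exponent) makes $M\le1+\delta$, and the matching lower bound $\|\mathcal A(X)\|\ge\|\mathcal A(Z)\|-\|\mathcal A(X-Z)\|\ge(1-\delta/2)-2\varepsilon' M\ge1-\delta$ follows identically. Multiplying the failure probability by the number $(ed/2k)^{2k}$ of coordinate subspaces produces exactly $2e^{-n\left(f(\delta/2)-\frac{4k+2}{n}\log(65/\delta)-\frac{2k}{n}\log(ed/2k)\right)}$, as claimed.

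The main obstacle is the bookkeeping in the covering step: obtaining the exponent precisely $4k+2$ (rather than the roughly $8k$ a naive net on all $2k\times2k$ matrices would give) requires genuinely using the symmetric rank-$2$ parametrization, and tracking how the net fineness $\varepsilon'$ must be tied to $\delta$ in order to close the net-to-everywhere gap is what pins down the constant $65$. Everything else is routine; note, as the remark preceding the statement already observes, that \eqref{eq:cond 1 again} and \eqref{eq:def iso} are invoked only on symmetric matrices of rank at most $2$, so no stronger isometry hypothesis on $\mathcal A$ is used.
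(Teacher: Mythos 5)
Your proposal is correct and follows essentially the same route as the paper's proof: reduce to symmetric rank-at-most-two matrices on a fixed $2k$-dimensional coordinate subspace, build a net of size $(65/\delta)^{4k+2}$ via the rank-two parametrization, invoke \eqref{eq:def iso} with $\epsilon=\delta/2$ on the net, upgrade to the whole set through the standard bound on the random supremum $M$, and union-bound over the $(ed/2k)^{2k}$ subspaces. The only (cosmetic) difference is in the net-to-everywhere step, where you split the rank-four difference $X^\star-Z$ into two rank-two pieces, whereas the paper compares the two parametrized terms of $X$ and $Z$ separately using \eqref{eq:HS with pm}; both yield the same conclusion.
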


As a consequence of Theorem \ref{th:again}, we can fix $\delta$ and derive two constants $c_1,c_2>0$ depending only on $\delta$ and $f(\delta/2)$, such that \eqref{eq:final new I hope} holds for all $k$-sparse $x,y\in\R^d$ in a uniform fashion with probability of failure at most $e^{-c_1n}$ whenever $n\geq c_2k\log(ed/k)$. 
The analogous result for not necessarily sparse vectors is derived in \cite{Recht:2010fk}. 
\begin{proof}
%For fixed $k$-sparse vectors $x,y\in\R^d$, \eqref{eq:def iso} yields
%\begin{equation}\label{eq:final new I hope}
%\|A(x) -A(y)\|^2_{\ell_2} \geq (1-\delta)\|xx^*-yy^*\|^2_{HS} 
%\end{equation}
%with probability of failure at most $2e^{-\frac{n}{2}(\delta^2/2 - \delta^3/3)}$. 
We fix an index set $\mathcal{I}$ of $2k$ coordinates in $\R^d$, denote the underlying coordinate subspace by $V$, and define 
\begin{equation*}
\mathcal{X}=\{X\in\R^{d\times d} :  X=X^*,\;\rank(X)\leq 2,\;\|X\|_{HS} = 1,\; X_{i,j}=0, \text{ if $i\not\in 
\mathcal{I}$ or $j\not\in 
\mathcal{I}$}\}.
\end{equation*}
Any element $X\in\mathcal{X}$ can be written as $X=a xx^*+byy^*$ such that $a^2+b^2=1$ and $x,y\in V$ are orthogonal unit norm vectors. In order to build a covering of $\mathcal{X}$, we start with a covering of the $2k$-dimensional unit sphere $S_V$ in $ V$. Indeed, there is a finite set $\mathcal{N}_1\subset S_V$ of cardinality at most $(1+\frac{64}{\delta})^{2k}\leq (\frac{65}{\delta})^{2k}$, such that, for every $x\in S_V$, there is $y\in \mathcal{N}_1$ with $\|x-y\|\leq \delta/32$, see, for instance, \cite[Lemma 5.2]{Vershynin:2012fk}.
 We can also uniformly cover $[-1,1]$ with a finite set of cardinality $\frac{32}{\delta}$, so that the error is bounded by $\delta/16$. Thus, we can cover $\mathcal{X}$ with a set $\mathcal{N}$ of cardinality at most $(\frac{32}{\delta})^2 (\frac{65}{\delta})^{4k}\leq (\frac{65}{\delta})^{4k+2}$, such that, for every $X=a xx^*+byy^*\in\mathcal{X}$, there is $Y=a_0x_0x_0+b_0y_0y_0^*\in \mathcal{N}$ with 
\begin{equation}\label{eq:covering}
|a-a_0|\leq \delta/16,\quad |b-b_0|\leq \delta/16, \quad \|x-x_0\|\leq \delta/32,\quad \|y-y_0\|\leq \delta/32.
\end{equation}
%
%\begin{align*}
%\|X-Y\|_{HS}& = 
%\|a xx^*+byy^*-a_0x_0x_0+b_0y_0y_0^*\|_{HS} \\
%& \leq \|a xx^*-a_0 x_0x_0^*\|_{HS} +\|byy^*-b_0y_0y_0^* \|_{HS}\\
%& \leq |a|\| xx^*- x_0x_0^*\|_{HS} + |a-a_0|\| x_0x_0^*\|_{HS} +\\
%& \qquad |b|\| yy^*- y_0y_0^*\|_{HS} + |b-b_0|\| y_0y_0^*\|_{HS} \\
%& \leq \delta/4.
%\end{align*}
%where the last inequality stems from \eqref{eq:HS with pm}. 
We can choose $\varepsilon=\delta/2$, so that \eqref{eq:def iso} holds uniformly on $\mathcal{N}$ with probability of failure at most $2 (\frac{65}{\delta})^{4k+2}e^{-nf(\delta/2)}$. Taking the square root yields with at most the same probability of failure that
\begin{equation*}
1-\delta/2\leq \|\mathcal{A}(Y)\| \leq 1+\delta/2
\end{equation*}
holds uniformly for all $Y\in\mathcal{N}$.

We now define the random variable 
\begin{equation}\label{eq:bound M}
M = \max\{ K\geq 0 : \|\mathcal{A}(X)\|\leq K \|X\|_{HS},\text{ for all } X\in\mathcal{X}\}
\end{equation}
and consider an arbitrary $X=axx+byy^*\in\mathcal{X}$. Then there is $Y=a_0x_0x_0+b_0y_0y_0^*\in\mathcal{N}$ such that \eqref{eq:covering} is satisfied, so that
we can further estimate
\begin{align*}
\|\mathcal{A}(X)\| & \leq \|\mathcal{A}(Y)\| + \|\mathcal{A}(axx+byy^*-a_0x_0x_0-b_0y_0y_0^*)\|\\
& \leq 1+\delta/2+   \|\mathcal{A}(axx-a_0x_0x_0^*)\|+  \|\mathcal{A}(byy-b_0y_0y_0^*)\|.
\intertext{Although $axx-a_0x_0x_0^*$ and $byy-b_0y_0y_0^*$ may not be elements in $\mathcal{X}$, a simple normalization allows us to apply the bound \eqref{eq:bound M}, so that we obtain}
\|\mathcal{A}(X)\| & \leq 1+\delta/2+   M\|axx-a_0x_0x_0^*\|_{HS}+  M\|byy-b_0y_0y_0^*\|_{HS} \\
& \leq 1+\delta/2+   M|a|\| xx^*- x_0x_0^*\|_{HS} +M |a-a_0|\| x_0x_0^*\|_{HS} +\\
& \qquad M |b|\| yy^*- y_0y_0^*\|_{HS} + M|b-b_0|\| y_0y_0^*\|_{HS} \\
&\leq  1+\delta/2 +M\delta/4.
\end{align*}
For the last inequality, we have used \eqref{eq:HS with pm}.   
By choosing $X\in\mathcal{X}$ with $\|\mathcal{A}(X)\|=M$, we derive $M\leq 1+\delta/2 +M\delta/4$, which implies $M\leq 1+\delta$. Thus, we obtain $\|\mathcal{A}(X)\| \leq  1+\delta$. 

The lower bound is similarly derived by 
\begin{equation*}
\|\mathcal{A}(X)\|\geq \|\mathcal{A}(Y)\| - \|\mathcal{A}(Y-X)\|\geq 1-\delta/2-(1+\delta)\delta/4\geq 1-\delta.
\end{equation*}

So far, we have the estimate \eqref{eq:final new I hope} in a uniform fashion for all $x,y$ in some fixed coordinate subspace of dimension $2k$ with probability of failure at most $2 (\frac{65}{\delta})^{4k+2}e^{-nf(\delta/2)}$. Again, we derive the union bound by counting subspaces as in the proof of Theorem \ref{th:complex and new}. There are at most $(ed/(2k))^{2k}$ many subspaces, so that we can conclude the proof. 
%We obtain that \eqref{eq:final new I hope} holds with probability of failure at most 
%\begin{align*}
%2 (ed/(2k))^{2k}(\frac{65}{\delta})^{4k+2}e^{-\frac{n}{2}(\delta^2/8 - \delta^3/24)} = 2e^{-n\big(\delta^2/16 - \delta^3/48  -\frac{4k+2}{n}\log(65/\delta) -\frac{2k}{n}\log(ed/2k)\big)}
%\end{align*}
%We first suppose $k\leq d/2$. 
%Let $V$ be a $k$-dimensional coordinate subspace in $\R^d$. Then $\{xx^*-yy^* : x,y\in V\}$ is contained in a $4k^2$-dimensional subspace in $\R^{d\times d}$. 
%
%
% Lemma 4.3 in \cite{Recht:2010fk} implies that there are constants $c_1,c_2>0$ such that \eqref{eq:final new I hope} holds for all $x,y\in V$, with probability of failure at most $e^{-c_1n}$ when $n\geq c_2 k$. The number of different subspaces $V$ is bounded by $(ed/(2k))^{2k}$. We can follow the lines at the end of the proof of Theorem \ref{th:complex and new} to derive the statement.
\end{proof}

 \subsection{Numerical experiments for greedy phase retrieval}
We shall follow the model in \eqref{eq:model phase} and study signal reconstruction rates for random choices of measurement vectors $\{a_i: i=1,\dots n\}$ chosen as independent standard Gaussian vectors. For a fixed number of measurements $n$, we shall study the signal recovery rate depending on the sparsity $k$. We expect to have high success rates for small $k$ and decreased rates when $k$ grows. Figure \ref{fig:phase} shows results of numerical experiments consistent with our theoretical findings. We must point out though that each step of the greedy algorithm requires solving a nonconvex global optimization problem. Here, we used standard optimization routines that may yield results that are not optimal. Better outcomes can be expected when applying more sophisticated global optimization methods, for instance, based on adaptive grids and more elaborate analysis of functions of few parameters in high dimensions, cf.~\cite{Cohen:2013uq}.

\begin{figure}
\centering
%\subfigure[$d=20$, $n=7$]{
%\includegraphics[width=.32\textwidth]{phase_success.png}}
\subfigure[$d=20$, $n=7$]{
\includegraphics[width=.31\textwidth]{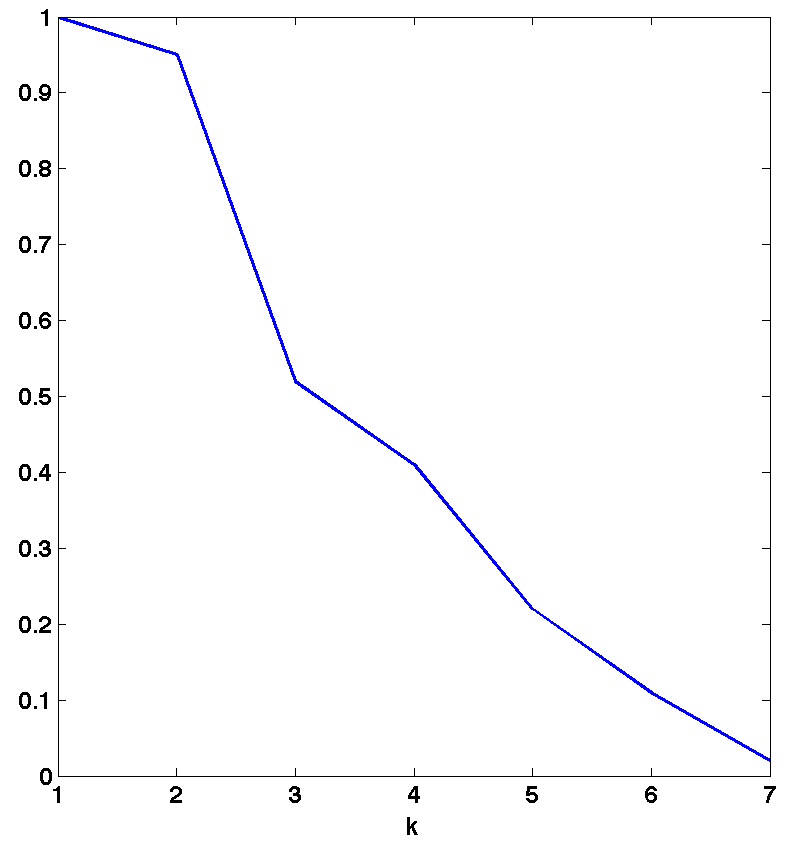}}
\subfigure[$d=20$, $n=11$]{
\includegraphics[width=.315\textwidth]{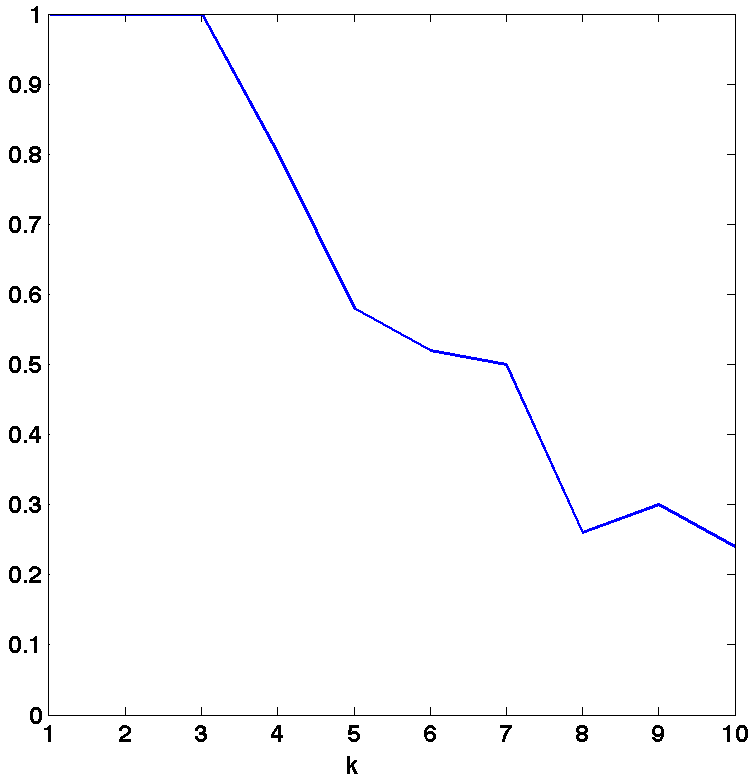}}
%\subfigure[$d=20$, $n=11$]{
%\includegraphics[width=.31\textwidth]{greedyganzneu_1.png}}
\caption{Signal reconstruction rates vs.~sparsity $k$. 
%We chose $d=20$ and $n=7$. 
Reconstruction is repeated $50$ times for each signal and $k$ to derive stable recovery rates. For small $k$, we almost certainly recover the signal, and, as expected, increasing $k$ leads to decreased success rates.
}\label{fig:phase}
\end{figure}

 \section{Iterative thresholding for quasi-linear problems}\label{sec:thresholding}

{  Although the quasi-linear structure of the measurements does not play an explicit role in the formulation of Algorithm \ref{algo:1}, in the examples we showed in the previous sections it was nevertheless a crucial aspect to obtain generalized RIP conditions 
such as  \eqref{eq:1} and \eqref{eq:1b}. When using iterative thresholding algorithms, as we shall show below, the quasi-linear structure of the measurements gets into the formulation of the algorithms as well.
Hence, from now on we shall be a bit more explicit on the form of nonlinearity and we  consider a map $F:\R^d\rightarrow \R^{n\times d}$, and aim to reconstruct $\hat{x}\in\R^d$ from measurements $b\in\R^n$ given by 
\begin{equation*}
 F(\hat{x})\hat{x} = b.
\end{equation*}
As guiding examples, we keep as references the maps  $A$ in Proposition \ref{th:example}, which can be written as $A(x)=F(x)x$, where $F(x)=A_1+\epsilon f(\|x-x_0\|)A_2$. 
\\

The study of iterative thresholding algorithms that we propose below is motivated by the intrinsic limitations of Algorithm \ref{algo:1}, in particular, its restriction to recovering only signals which have a rapid decay of their nonincreasing rearrangement, and the
potential complexity explosion due to the need of performing several high-dimensional global optimizations at each greedy step.
}

\subsection{Iterative hard-thresholding}
We follow ideas in \cite{Blumensath:2012fk} and aim to use the iterative scheme 
\begin{equation}\label{eq:iteration}
x^{(j+1)} : = \big( x^{(j)}+\frac{1}{\mu_k} F(x^{(j)})^*(b- F(x^{(j)})x^{(j)}) \big)_{\{k\}},
\end{equation}
where $\mu_k>0$ is some parameter and, as before, $y_{\{k\}}$ denotes the best $k$-sparse approximation of $y\in\R^d$. The following theorem is a reformulation of a result by Blumensath in \cite{Blumensath:2012fk}:
\begin{theorem}\label{th:blumi}
 Let $b=A(\hat{x})+e$, where $\hat{x}\in\R^d$ is the signal to be recovered and $e\in\R^n$ is a noise term. Suppose $1\leq k\leq d$ is fixed. If $F$ satisfies the following assumptions,
 \begin{itemize}
 \item[(i) ] there is $c>0$ such that $\|F(\hat{x})\|_2\leq c$,
 \item[(ii) ] there are $\alpha_k,\beta_k>0$ such that, for all $k$-sparse $x,y,z\in\R^d$,
 \begin{equation}\label{eq:RIP again}
 \alpha_k \|x-y\|\leq \|F(z)(x-y)\|\leq \beta_k\|x-y\|,
 \end{equation}
 \item[(iii) ] there is $C_k>0$ such that, for all $k$-sparse $y\in\R^d$, 
 \begin{equation}\label{eq:new 1 new}
 \|F(\hat{x}_{\{k\}}) - F(y)\|_2 \leq C_k\|\hat{x}_{\{k\}}-y\|,%\qquad \|F(\hat{x}_{\{k\}}) - F(y)\|_2 \leq C_k\|\hat{x}_{\{k\}}-y\|,
 \end{equation}
 \item[(iv) ] there is $L_k>0$ such that $ \|F(\hat{x})\hat{x} - F(\hat{x}_{\{k\}})\hat{x}_{\{k\}}\| \leq L_k\|\hat{x}-\hat{x}_{\{k\}}\|$,
 \item[(v) ] the constants satisfy $\beta_k^2\leq 1/\mu_k<\frac{3}{2}\alpha_k^2-4\|\hat{x}_{\{k\}}\|^2C_k^2$,
 \end{itemize}
 then the iterative scheme \eqref{eq:iteration} converges towards $x^{\star}$ satisfying 
 \begin{equation*}
 \|x^\star -  \hat{x}\|\leq \gamma\|e\|+ (1+\gamma c+\gamma L_k) \|\hat{x}-\hat{x}_{\{k\}}\|,
 \end{equation*}
 where  $\gamma=\frac{2}{0.75\alpha^2_k-1/\mu_k-2\|\hat{x}_{\{k\}}\|^2C^2_k}$.
 \end{theorem}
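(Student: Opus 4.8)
### Proof proposal for Theorem \ref{th:blumi}

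The plan is to recast the fixed-point iteration \eqref{eq:iteration} as an instance of the abstract convergence theory for (projected) gradient-type iterations, following Blumensath's analysis in \cite{Blumensath:2012fk}, and to verify that the hypotheses (i)--(v) provide exactly the quantitative ingredients that analysis requires. The key observation is that \eqref{eq:iteration} is a hard-thresholded step of the map $y \mapsto y + \frac{1}{\mu_k}F(y)^*(b - F(y)y)$, so that one expects contraction of the error $\|x^{(j)} - \hat x_{\{k\}}\|$ up to a fixed offset controlled by $\|e\|$ and the tail $\|\hat x - \hat x_{\{k\}}\|$. First I would record the elementary inequality for the hard-thresholding operator: since $x^{(j+1)}$ is a best $k$-sparse approximation of $w^{(j)} := x^{(j)} + \frac{1}{\mu_k}F(x^{(j)})^*(b - F(x^{(j)})x^{(j)})$ and $\hat x_{\{k\}}$ is itself $k$-sparse, one has $\|x^{(j+1)} - w^{(j)}\| \le \|\hat x_{\{k\}} - w^{(j)}\|$, hence $\|x^{(j+1)} - \hat x_{\{k\}}\| \le 2\|w^{(j)} - \hat x_{\{k\}}\|$; all subsequent vectors live on the union of at most three $k$-sparse supports, so the RIP-type bounds \eqref{eq:RIP again} on $3k$-sparse increments (which is how I would read (ii)) are available throughout.

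Next I would expand $w^{(j)} - \hat x_{\{k\}}$ around the fixed point. Writing $b = F(\hat x)\hat x + e = F(\hat x_{\{k\}})\hat x_{\{k\}} + \big(F(\hat x)\hat x - F(\hat x_{\{k\}})\hat x_{\{k\}}\big) + e$, one gets
\begin{equation*}
w^{(j)} - \hat x_{\{k\}} = \Big(I - \tfrac{1}{\mu_k}F(x^{(j)})^*F(x^{(j)})\Big)(x^{(j)} - \hat x_{\{k\}}) + \tfrac{1}{\mu_k}F(x^{(j)})^*\big[\big(F(\hat x_{\{k\}}) - F(x^{(j)})\big)\hat x_{\{k\}}\big] + \tfrac{1}{\mu_k}F(x^{(j)})^*\big(F(\hat x)\hat x - F(\hat x_{\{k\}})\hat x_{\{k\}} + e\big).
\end{equation*}
The first term is controlled by restricting $I - \frac{1}{\mu_k}F(x^{(j)})^*F(x^{(j)})$ to the relevant $3k$-sparse subspace: by \eqref{eq:RIP again} its eigenvalues on that subspace lie in $[1 - \beta_k^2/\mu_k,\ 1 - \alpha_k^2/\mu_k]$, and since $\beta_k^2 \le 1/\mu_k$ by (v) the operator norm there is at most $1 - \alpha_k^2/\mu_k$. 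The second (Lipschitz-perturbation-of-$F$) term is bounded using \eqref{eq:new 1 new} and $\|F(x^{(j)})\|_2$, which one estimates by $\|F(\hat x_{\{k\}})\|_2 + C_k\|x^{(j)} - \hat x_{\{k\}}\|$, or more simply by working with $\|F(x^{(j)})^* v\|$ on $3k$-sparse $v$ via $\beta_k$; in either case it produces a term proportional to $\|\hat x_{\{k\}}\| C_k \|x^{(j)} - \hat x_{\{k\}}\|/\mu_k$. The residual term is the fixed offset, bounded via (i), (iv) by $\big(L_k\|\hat x - \hat x_{\{k\}}\| + \|e\|\big)$ times $\|F(x^{(j)})\|_2/\mu_k$. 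Collecting these and multiplying by the factor $2$ from the thresholding step yields a recursion
\begin{equation*}
\|x^{(j+1)} - \hat x_{\{k\}}\| \le 2\Big(1 - \tfrac{\alpha_k^2}{\mu_k} + \tfrac{2\|\hat x_{\{k\}}\|^2 C_k^2}{\mu_k}\Big)^{?}\|x^{(j)} - \hat x_{\{k\}}\| + (\text{offset}),
\end{equation*}
and the point of the sharp constant in (v), namely $1/\mu_k < \frac{3}{2}\alpha_k^2 - 4\|\hat x_{\{k\}}\|^2 C_k^2$, is precisely to make the resulting contraction factor strictly less than $1$; a Neumann/geometric-series summation then gives convergence to a limit $x^\star$ and the stated bound with $\gamma = \frac{2}{0.75\alpha_k^2 - 1/\mu_k - 2\|\hat x_{\{k\}}\|^2 C_k^2}$, after absorbing $\|x^\star - \hat x_{\{k\}}\|$ into $\|x^\star - \hat x\| \le \|x^\star - \hat x_{\{k\}}\| + \|\hat x - \hat x_{\{k\}}\|$ and bounding the $\|F(x^{(j)})\|_2/\mu_k$ prefactors by $\gamma c$, $\gamma L_k$.

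The main obstacle I anticipate is bookkeeping the cross term coming from the fact that $F$ is evaluated at the \emph{moving} point $x^{(j)}$ rather than at $\hat x_{\{k\}}$: one must simultaneously control $\|F(x^{(j)})\|_2$ (which a priori drifts as $x^{(j)}$ moves) and the perturbation $(F(\hat x_{\{k\}}) - F(x^{(j)}))\hat x_{\{k\}}$, and these are the terms responsible for the $C_k$-dependent correction that \emph{shrinks} the admissible range of $1/\mu_k$ in (v). Making the contraction estimate self-consistent — i.e. showing that if $\|x^{(j)} - \hat x_{\{k\}}\|$ starts small enough (or, as in Blumensath's treatment, using that the offset is the only obstruction and the iterates stay in a bounded region) then the linearized bound above is legitimate for all $j$ — requires an inductive argument on the iterate norms, and getting the constant exactly $\frac{3}{2}\alpha_k^2 - 4\|\hat x_{\{k\}}\|^2C_k^2$ (rather than something weaker) is where the careful accounting of the factor-of-two thresholding loss against the quadratic $\frac{2\|\hat x_{\{k\}}\|^2 C_k^2}{\mu_k}$ term must be done precisely. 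Since the statement is explicitly a reformulation of \cite{Blumensath:2012fk}, I would lean on that reference for the delicate part and present here only the translation of its hypotheses into the quasi-linear language of $F(x) = A_1 + \epsilon f(\|x - x_0\|)A_2$.
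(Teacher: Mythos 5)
Your proposal is correct and follows essentially the same route as the paper: the published proof also reduces everything to Blumensath's Corollary~2 by verifying the linearization-error condition, namely $\|F(\hat{x}_{\{k\}})\hat{x}_{\{k\}}-F(y)y-F(y)(\hat{x}_{\{k\}}-y)\|=\|(F(\hat{x}_{\{k\}})-F(y))\hat{x}_{\{k\}}\|\leq \|\hat{x}_{\{k\}}\|C_k\|\hat{x}_{\{k\}}-y\|$ via (iii), exactly the cross term you isolate, and then bounds the residual $\|b-F(\hat{x}_{\{k\}})\hat{x}_{\{k\}}\|\leq\|e\|+L_k\|\hat{x}-\hat{x}_{\{k\}}\|$ by the same zero addition. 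The extra contraction bookkeeping you sketch is not reproduced in the paper either, which likewise defers it to the cited corollary.
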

 
 Note that \eqref{eq:RIP again} is again a RIP condition for each $F(z)$. 
 % and \eqref{eq:new 1 new}  means that the mapping $z\mapsto F(z)$ is Lipschitz-continuous.    
 The proof of Theorem \ref{th:blumi} is based on Blumensath's findings in \cite{Blumensath:2012fk}, where the nonlinear operator $A$ is replaced by its first order approximation at $x^{(j)}$ within the iterative scheme. When dealing with the quasi-linear setting, it is natural to use $F(x^{(j)})$, so we formulated the iteration in this way already.
 \begin{proof}
 We first verify that the assumptions in \cite[Corollary 2]{Blumensath:2012fk} are satisfied. 
By using \eqref{eq:new 1 new}, we derive, for $k$-sparse $y\in\R^d$,
\begin{align*}
\| F(\hat{x}_{\{k\}})\hat{x}_{\{k\}} -F(y)y - F(y)(\hat{x}_{\{k\}}-y)\| & =  \|F(\hat{x}_{\{k\}})\hat{x}_{\{k\}}- F(y)\hat{x}_{\{k\}}  \|\\
& \leq\|\hat{x}_{\{k\}}\| C_k \| \hat{x}_{\{k\}}-y \|.
\end{align*}
%By zero addition, we derive
%\begin{align*}
%\|F(\hat{x}_{\{k\}})\hat{x}_{\{k\}}- F(y)\hat{x}_{\{k\}}  \| & \leq \|F(\hat{x}_{\{k\}})\hat{x}_{\{k\}}-F(\hat{x})\hat{x}_{\{k\}}\|+\|F(\hat{x})\hat{x}_{\{k\}}- F(y)\hat{x}_{\{k\}}  \|\\
%& \leq \|\hat{x}\|C_k\|\hat{x}-\hat{x}_{\{k\}}\|+\|\hat{x}\|C_k\|\hat{x}-y\|\\
%& \leq 2\|\hat{x}\|C_k\|\hat{x}-y\|,
%\end{align*} 
%because we have $\|\hat{x}_{\{k\}}\|\leq \|\hat{x}\|$ and $\|\hat{x}-\hat{x}_{\{k\}}\|\leq \|\hat{x}-y\|$. 
The assumptions of \cite[Corollary 2]{Blumensath:2012fk} are satisfied, which implies that the iterative scheme \eqref{eq:iteration} converges to some $k$-sparse $x^\star$ satisfying
\begin{equation*}
 \|x^\star -  \hat{x}\|\leq  \gamma \|b-F(\hat{x}_{\{k\}})\hat{x}_{\{k\}}\|+\|\hat{x}_{\{k\}}-\hat{x}\|,
\end{equation*}
where $\gamma=\frac{2}{0.75\alpha^2_k-1/\mu_k-2\|\hat{x}_{\{k\}}\|^2C^2_k}$. We still need to estimate the left term on the right-hand side. A zero addition yields
\begin{align*}
\|b-F(\hat{x}_{\{k\}})\hat{x}_{\{k\}}\| & \leq \|e\|+\|F(\hat{x})\hat{x}-F(\hat{x}_{\{k\}})\hat{x}_{\{k\}}\|\\
& \leq \|e\| + L_k\|\hat{x}-\hat{x}_{\{k\}}\| ,
%& \leq \|e\|+\|F(\hat{x})\hat{x}-F(\hat{x})\hat{x}_{\{k\}}\| + \|F(\hat{x})\hat{x}_{\{k\}}-F(\hat{x}_{\{k\}})\hat{x}_{\{k\}}\|\\
%&\leq \|e\| + c\|\hat{x}-\hat{x}_{\{k\}}\|+L_k\|\hat{x}-\hat{x}_{\{k\}}\| \|\hat{x}_{\{k\}}\|,
\end{align*}
which concludes the proof.
\end{proof}

We shall verify that the map in Proposition \ref{th:example} satisfies the assumptions of Theorem \ref{th:blumi} at least when $\hat{x}$ is $k$-sparse:
\begin{example}\label{ex:1}
Let $F(x)=A_1+\epsilon f(\|x-x_0\|)A_2$, so that $F(x)x=A(x)$ with $A$ as in Proposition \ref{th:example}. By a similar proof and under the same notations we derive that an upper bound in \eqref{eq:RIP again} can be chosen as $\beta=1+\delta+\epsilon B \|A_2\|$, where $\delta$ is the RIP constant for $A_1$. For the lower bound, we compute $\alpha=1-\delta-\epsilon B \|A_2\|$. In other words, $F(x)$ satisfies the RIP of order $k$ with constant $\delta+\epsilon B \|A_2\|$. In \eqref{eq:new 1 new}, we can choose $C=\epsilon L \|A_2\|$. Thus, if $\epsilon,B,\|A_2\|,\delta$ are sufficiently small, then the assumptions of Theorem \ref{th:blumi} are satisfied.
\end{example}

\subsection{Iterative soft-thresholding}
The type of thresholding in the scheme \eqref{eq:iteration} of the previous section is one among many potential choices. Here, we shall discuss soft-thresholding, which is widely applied when dealing with linear compressed sensing problems.  %First, we start with a motivation. 
Our findings in this section are based on preliminary results in \cite{Sigl:2013fk}. We suppose that the original signal is sparse and, in fact, we aim to reconstruct the sparsest $\hat{x}$ that matches the data $b=F(\hat{x})\hat{x}$. In other words, we intend to solve for $\hat{x}\in\R^d$ with $\hat{x}=\arg\min \|x\|_{\ell_0}$ subject to $F(x)x=b$. Theorem \ref{th:blumi} yields that we can use iterative hard-thresholding to reconstruct the sparsest solution. Here, we shall follow a slightly different strategy. As $\ell_0$-minimization is a combinatorial optimization problem and computationally cumbersome in principle, even NP-hard under many circumstances, it is common practice in compressed sensing to replace the $\ell_0$-pseudo norm with the $\ell_1$-norm, so that we consider the problem
\begin{equation}\label{eq:min l1}
\arg\min\|x\|_{\ell_1}\quad\text{subject to}\quad F(x)x=b.
\end{equation}
It is also somewhat standard to work with an additional relaxation of it and instead solve for $\hat{x}_\alpha$ given by
\begin{equation}\label{eq:min relaxed}
\hat{x}_\alpha:=\arg\min_{x\in\R^d}\mathcal{J}_\alpha(x),\qquad\text{where}\qquad \mathcal{J}_\alpha(x):=\|F(x)x-b\|^2_{\ell_2} + \alpha \|x\|_{\ell_1},
\end{equation}
where $\alpha>0$ is sometimes called the relaxation parameter. The optimization \eqref{eq:min relaxed} allows for $F(x)x\neq b$, hence, is particularly beneficial when we deal with measurement noise, so that $b=F(\hat{x})\hat{x}+e$ and $\alpha$ can be suitably chosen to compensate for the magnitude of $e$. If there is no noise term, then \eqref{eq:min relaxed} approximates \eqref{eq:min l1} when $\alpha$ tends to zero. The latter is a standard result but we explicitly state this and prove it for the sake of completeness:
\begin{proposition}\label{prop:1}
Let the map $x\mapsto F(x)$ be continuous and suppose that $(\alpha_n)_{n=1}^\infty$ is a sequence of nonnegative numbers that converge towards $0$. If $(\hat{x}_{\alpha_n})_{n=1}^\infty$ is any sequence of minimizers of $\mathcal{J}_{\alpha_n}$, then it contains a subsequence that converges towards a minimizer of \eqref{eq:min l1}. If the minimizer of \eqref{eq:min l1} is unique, then the entire sequence $(\hat{x}_{\alpha_n})_{n=1}^\infty$ converges towards this minimizer. 
\end{proposition}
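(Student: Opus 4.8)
\textbf{Proof plan for Proposition \ref{prop:1}.}

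The plan is to run the standard direct-method argument, tracking carefully that the nonlinearity of $F$ only enters through the continuity hypothesis. First I would establish a uniform bound on the sequence $(\hat{x}_{\alpha_n})$. Let $\hat{x}^\dagger$ be any feasible point for \eqref{eq:min l1}, i.e.\ $F(\hat{x}^\dagger)\hat{x}^\dagger = b$ (such a point exists since $b = F(\hat{x})\hat{x}$ for the signal $\hat{x}$). By minimality of $\hat{x}_{\alpha_n}$ we have $\mathcal{J}_{\alpha_n}(\hat{x}_{\alpha_n}) \le \mathcal{J}_{\alpha_n}(\hat{x}^\dagger) = \alpha_n \|\hat{x}^\dagger\|_{\ell_1}$, hence both $\|F(\hat{x}_{\alpha_n})\hat{x}_{\alpha_n} - b\|_{\ell_2}^2 \le \alpha_n \|\hat{x}^\dagger\|_{\ell_1}$ and $\alpha_n\|\hat{x}_{\alpha_n}\|_{\ell_1} \le \alpha_n\|\hat{x}^\dagger\|_{\ell_1}$. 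The second inequality gives $\|\hat{x}_{\alpha_n}\|_{\ell_1} \le \|\hat{x}^\dagger\|_{\ell_1}$, so the sequence is bounded in $\R^d$.

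Next I would extract a convergent subsequence. By Bolzano--Weierstrass, there is a subsequence (not relabeled) with $\hat{x}_{\alpha_n} \to x^\star$ for some $x^\star \in \R^d$. Using continuity of $x \mapsto F(x)$ (and hence of $x \mapsto F(x)x$), together with $\alpha_n \to 0$, the first inequality above yields
\begin{equation*}
\|F(x^\star)x^\star - b\|_{\ell_2}^2 = \lim_{n\to\infty} \|F(\hat{x}_{\alpha_n})\hat{x}_{\alpha_n} - b\|_{\ell_2}^2 \le \lim_{n\to\infty} \alpha_n \|\hat{x}^\dagger\|_{\ell_1} = 0,
\end{equation*}
so $x^\star$ is feasible for \eqref{eq:min l1}. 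It remains to see that $x^\star$ is a minimizer: by continuity of the $\ell_1$-norm and the bound $\|\hat{x}_{\alpha_n}\|_{\ell_1} \le \|\hat{x}^\dagger\|_{\ell_1}$ for every feasible $\hat{x}^\dagger$, we get $\|x^\star\|_{\ell_1} \le \|\hat{x}^\dagger\|_{\ell_1}$ for all feasible $\hat{x}^\dagger$; taking the infimum over feasible points shows $x^\star$ solves \eqref{eq:min l1}.

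For the last assertion, suppose the minimizer $x^\star$ of \eqref{eq:min l1} is unique. Then every subsequence of $(\hat{x}_{\alpha_n})$ has, by the argument above, a further subsequence converging to $x^\star$ (the limit is forced to be \emph{the} minimizer). A standard subsequence principle then implies the whole sequence converges to $x^\star$. The only genuinely nontrivial ingredient is the passage to the limit in the fidelity term, which is where continuity of $F$ is essential; everything else is the routine direct-method bookkeeping, so I expect no real obstacle here. One should perhaps remark that feasibility of \eqref{eq:min l1} is guaranteed by hypothesis (the data $b$ arises as $F(\hat{x})\hat{x}$), which is what makes the comparison point $\hat{x}^\dagger$ available.
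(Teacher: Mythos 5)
Your proof is correct and takes essentially the same route as the paper's: comparison of $\mathcal{J}_{\alpha_n}(\hat{x}_{\alpha_n})$ with its value at a feasible point to obtain the uniform $\ell_1$-bound and the vanishing of the fidelity term, Bolzano--Weierstrass, continuity of $F$ to pass to the limit, and the subsequence principle for the uniqueness claim. The only cosmetic difference is that you compare against an arbitrary feasible point and take an infimum at the end (which incidentally also establishes existence of a minimizer of \eqref{eq:min l1}), whereas the paper compares directly against a minimizer.
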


\begin{proof}
Let $\hat{x}$ be a minimizer of \eqref{eq:min l1}. Direct computations yield
\begin{equation}\label{eq:l1 tmp}
\|\hat{x}_{\alpha_n}\|_{\ell_1}\leq \frac{1}{\alpha_n}\mathcal{J}_{\alpha_n}(\hat{x}_{\alpha_n})\leq \frac{1}{\alpha_n}\mathcal{J}_{\alpha_n}(\hat{x}) = \|\hat{x}\|_{\ell_1}.
\end{equation}
Thus, there is a convergent subsequence $(\hat{x}_{\alpha_{n_j}})_{j=1}^\infty\rightarrow \bar{x}\in\R^d$, for $j\rightarrow\infty$. Since
\begin{align*}
\|F(\bar{x})\bar{x}-y\| = \lim_{j\rightarrow\infty} \|F(\hat{x}_{\alpha_{n_j}})\hat{x}_{\alpha_{n_j}}-y\|\leq \lim_{j\rightarrow\infty} \mathcal{J}_{\alpha_{n_j}}(\hat{x}_{\alpha_{n_j}}) \leq \lim_{j\rightarrow\infty}\mathcal{J}_{\alpha_{n_j}}(\hat{x})=0
%0=\|F(\hat{x})\hat{x}-y\| = \lim_{j\rightarrow\infty}\mathcal{J}_{\alpha_{n_j}}(\hat{x})\geq \lim_{j\rightarrow\infty}\mathcal{J}_{\alpha_{n_j}}(\hat{x}_{\alpha_{n_j}})
\end{align*}
and \eqref{eq:l1 tmp} hold, $\bar{x}$ must be a minimizer of \eqref{eq:min l1}. 

Now, suppose that the minimizer $\hat{x}$ of \eqref{eq:min l1} is unique. If $x_0$ is an accumulation point of $(\hat{x}_{\alpha_{n}})_{j=1}^\infty$, then there is a subsequence converging towards $x_0$. The same arguments as above with the uniqueness assumption yield $x_0=\hat{x}$, so that  $(\hat{x}_{\alpha_{n}})_{j=1}^\infty$ is a bounded sequence with only one single accumulation point. Hence, the entire sequence converges towards $\hat{x}$. 
%If the minimizer $\bar{x}$ of \eqref{eq:min l1} is unique, then the same arguments as above yield that any subsequence of $(\hat{x}_{\alpha_n})_{n=1}^\infty$ has a subsequence that converges towards $\bar{x}$. Therefore, one can show that $\bar{x}$ is the only accumulation point of $(\hat{x}_{\alpha_n})_{n=1}^\infty$, which then implies that the sequence itself converges.
%a subsequence argument concludes the proof. 
\end{proof}

From here on, we shall focus on \eqref{eq:min relaxed}, which we aim to solve at least approximately using some iterative scheme. First, we define the map
\begin{equation*}
\mathscr{S}_\alpha:\R^d\rightarrow\R^d,\qquad x\mapsto \mathscr{S}_\alpha(x):=\arg\min_{y\in\R^d} \|F(x)y-b\|^2+\alpha\|y\|_{\ell_1}.
\end{equation*} 
To develop the iterative scheme, we present some conditions, so that $\mathscr{S}_\alpha$ is contractive:
\begin{theorem}\label{th:th elast one maybe}
Given $b\in\R^n$, fix $\alpha>0$ and suppose that there are constants $c_1,c_2,c_3,\gamma>0$ such that, for all $x,y\in\R^d$,

\begin{enumerate}[(i)]
\item \label{it:i}  $ %\begin{equation*}
\|F(x)\|_2\leq c_1,
$ %\end{equation*}
\item \label{it:ii}  there is $z_x\in\R^d$ such that $\|z_x\|_{\ell_1}\leq c_2\|b\|$ and $F(x)z_x=b$, %$F(x)$ is invertible with $\|F(x)^{-1}\|_{\ell_2\rightarrow\ell_1}\leq c_2$,
\item \label{it:iii}  
$ %\begin{equation*}
\|F(x)-F(y)\|_2 \leq c_3 \|x-y\|,
$ %\end{equation*}
%\item[(iv)]
%$ %\begin{equation*}
%\| I - F(x)^*F(x)\|_2 \leq c_4,
%$ %\end{equation*}
%\item for all $x,y\in\R^d$, 
%\begin{equation*}
%\|F(x)^*F(x)-F(y)^*F(y)\|_2 \leq c_3 \|x-y\|,
%\end{equation*}
\item \label{it:iv}   if $y$ is $\frac{4}{\alpha^2}(c_1+c_2+c_1^2c_2)^2\|b\|^2$-sparse, then 
\begin{equation*}
(1-\gamma)\|y\|^2 \leq \|F(x)y\|^2\leq (1+\gamma) \|y\|^2,
\end{equation*}
%\| (I - F(x)^*F(x))y\|\leq \gamma \|y\|$,
\item \label{it:v}   the constants satisfy $\gamma<1-(1+2c_1c_2)c_3\|b\|$,
% $\gamma<1$ and $\frac{1+2c_1c_2}{1-\gamma}c_3\|b\|<1$,  
\end{enumerate}
then $\mathscr{S}_\alpha$ is a bounded contraction, so that the recursive scheme $x^{(j+1)}_\alpha:=\mathscr{S}_\alpha(x^{(j)}_\alpha)$  converges for any initial vector  towards a point $x_\alpha$ satisfying {  the fixed point relationship}
\begin{equation}\label{eq:fixy}
x_\alpha = \arg\min_{y\in\R^d} \|F(x_\alpha)y-b\|^2+\alpha\|y\|_{\ell_1}.
\end{equation}
\end{theorem}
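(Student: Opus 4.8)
The plan is to prove Theorem~\ref{th:th elast one maybe} by showing that $\mathscr{S}_\alpha$ is a self-map of $\R^d$ whose image is bounded and which is Lipschitz with constant strictly less than one, and then invoking the Banach fixed point theorem; the fixed point relationship \eqref{eq:fixy} is then immediate from the definition of $\mathscr{S}_\alpha$.

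\textbf{Well-definedness and boundedness.} First I would note that for fixed $x$ the inner functional $y\mapsto\|F(x)y-b\|^2+\alpha\|y\|_{\ell_1}$ is convex and coercive (the $\ell_1$ term forces coercivity), so a minimizer $u=\mathscr{S}_\alpha(x)$ exists. Comparing its value with the value at the feasible point $z_x$ supplied by hypothesis (ii), for which $F(x)z_x=b$ and $\|z_x\|_{\ell_1}\le c_2\|b\|$, gives
\begin{equation*}
\|F(x)u-b\|^2+\alpha\|u\|_{\ell_1}\;\le\;\alpha\|z_x\|_{\ell_1}\;\le\;\alpha c_2\|b\|,
\end{equation*}
so that $\|\mathscr{S}_\alpha(x)\|_{\ell_1}\le c_2\|b\|$ uniformly in $x$ (this is the claimed boundedness) and, using (i), $\|F(x)u-b\|\le\|F(x)u\|+\|b\|\le(1+c_1c_2)\|b\|$.

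\textbf{Sparsity bookkeeping.} Next I would read off from the first-order optimality relation $2F(x)^*(F(x)u-b)+\alpha\xi=0$ with $\xi\in\partial\|u\|_{\ell_1}$ that $|(F(x)^*(F(x)u-b))_i|=\alpha/2$ for every $i$ in the support of $u$. Summing over the support and using $\|F(x)\|_2\le c_1$ together with the previous estimate bounds $\|\mathscr{S}_\alpha(x)\|_{\ell_0}$ by a multiple of $\|b\|^2/\alpha^2$ of the form appearing in hypothesis (iv); consequently any difference $\mathscr{S}_\alpha(x)-\mathscr{S}_\alpha(y)$ is sparse enough that the restricted near-isometry (iv) applies to it. The same relation, used for a single $x$ (two minimizers share the fitted vector $F(x)u$, hence their sufficiently sparse difference lies in the kernel of $F(x)$ and vanishes by (iv)), shows along the way that $\mathscr{S}_\alpha$ is single valued.

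\textbf{Contraction and conclusion.} For the core step, with $u=\mathscr{S}_\alpha(x)$ and $v=\mathscr{S}_\alpha(y)$, I would write the optimality conditions for both, subtract them, and pair with $u-v$. Monotonicity of the subdifferential of the convex function $\|\cdot\|_{\ell_1}$ makes $\langle\xi_u-\xi_v,u-v\rangle\ge0$, so the $\ell_1$ contribution drops out and
\begin{equation*}
\|F(x)(u-v)\|^2\;\le\;-\langle(F(x)^*F(x)-F(y)^*F(y))v,\,u-v\rangle+\langle(F(x)-F(y))^*b,\,u-v\rangle .
\end{equation*}
The left-hand side is at least $(1-\gamma)\|u-v\|^2$ by (iv); on the right I would use $\|F(x)^*F(x)-F(y)^*F(y)\|_2\le 2c_1c_3\|x-y\|$ (from (i) and (iii)), $\|v\|\le\|v\|_{\ell_1}\le c_2\|b\|$ from the first step, and $\|F(x)-F(y)\|_2\le c_3\|x-y\|$, to obtain $(1-\gamma)\|u-v\|\le c_3(1+2c_1c_2)\|b\|\,\|x-y\|$. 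Hypothesis (v) is exactly the assertion that $q:=c_3(1+2c_1c_2)\|b\|/(1-\gamma)<1$ (and that $1-\gamma>0$), so $\mathscr{S}_\alpha$ is a contraction of the complete space $\R^d$; Banach's theorem yields a unique fixed point $x_\alpha$, the limit of $x^{(j+1)}_\alpha=\mathscr{S}_\alpha(x^{(j)}_\alpha)$ from any initial vector, and $x_\alpha$ satisfies \eqref{eq:fixy} by the definition of $\mathscr{S}_\alpha$.

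\textbf{Main obstacle.} I expect the sparsity bookkeeping to be the delicate part: one must control the support sizes of the a priori non-unique inner minimizers and, above all, of their differences $\mathscr{S}_\alpha(x)-\mathscr{S}_\alpha(y)$, so that condition (iv) is legitimately available inside the contraction estimate; keeping that sparsity below the threshold stated in (iv) is where the precise bookkeeping matters. The clean cancellation of the $\ell_1$ term via monotonicity of its subdifferential is the other point that must not be overlooked.
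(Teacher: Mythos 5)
Your argument is correct and reaches the same contraction constant $\frac{(1+2c_1c_2)c_3\|b\|}{1-\gamma}$ as the paper, but it gets there by a genuinely different route in the two key steps. For the support bound, the paper represents the inner minimizer via the fixed-point identity $\mathscr{S}_\alpha(x)=\mathbb{S}_\alpha\big((I-F(x)^*F(x))\mathscr{S}_\alpha(x)+F(x)^*b\big)$, bounds the argument of the thresholding operator by $(c_1+c_2+c_1^2c_2)\|b\|$, and then cites an external lemma to count the surviving coordinates; you instead read the count directly off the KKT condition $|(F(x)^*(F(x)u-b))_i|=\alpha/2$ on the support, which is self-contained and in fact lands you at the slightly smaller constant $c_1+c_1^2c_2$, safely inside the threshold of (iv). For the contraction, the paper uses nonexpansiveness of $\mathbb{S}_\alpha$ together with the reading of (iv) as $\|(I-F(x)^*F(x))w\|\leq\gamma\|w\|$ for sparse $w$, whereas you subtract the two optimality conditions, discard the $\ell_1$ contribution by monotonicity of the subdifferential, and use (iv) only through the quadratic form $\|F(x)(u-v)\|^2\geq(1-\gamma)\|u-v\|^2$ on the sparse difference $u-v$ — which is formally a weaker (and more faithfully justified) consequence of the two-sided bound in (iv) than the operator-norm statement the paper invokes. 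What the paper's route buys is the explicit soft-thresholding representation and the map $\xi$, which are reused verbatim in the proof of Theorem \ref{th:very last}; what yours buys is independence from the cited lemma and a cleaner use of the restricted isometry. Both arguments share the same small bookkeeping slack that you correctly flag: the difference $\mathscr{S}_\alpha(x)-\mathscr{S}_\alpha(y)$ (and the difference of two inner minimizers in your uniqueness remark) can have support up to twice the individual bound, so condition (iv) is strictly needed at twice the stated sparsity level; this is a cosmetic adjustment of the constant in (iv) and affects the paper's proof equally.
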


\begin{remark}
We believe that the fixed point of \eqref{eq:fixy}  in Theorem \ref{th:th elast one maybe} is close to the actual minimizer of \eqref{eq:min relaxed}. To support this point of view, we shall later investigate on the distance $\|x_\alpha-\hat{x}_\alpha\|$ in Theorem \ref{theorem:very last now} and also provide some numerical experiments in Section \ref{sec:soft numerics}. 
\end{remark}

A few more comments are in order before we take care of the proof: 
Note that the constant $c_1$ must hold for the operator norm in \eqref{it:i}, and $\gamma$ in the RIP of \eqref{it:iv} covers only sparse vectors. Therefore, $1+\gamma$ can be much smaller than $c_1$. Condition \eqref{it:iii} is a standard Lipschitz property. If $\gamma$ is indeed less than $1$, then small data $b$ can make up for larger other constants, so that \eqref{it:v} can hold. The requirement \eqref{it:ii} is more delicate though and a rough derivation goes as follows: the data $b$ are supposed to lie in the range of $F(x)$, which is satisfied, for instance, if $F(x)$ is onto. The pseudo-inverse of $F(x)$ then yields a vector $z_x$ with minimal $\ell_2$-norm. We can then ask for boundedness of all operator norms of the pseudo-inverses. However, we still need to bound the $\ell_1$-norm  by using the $\ell_2$-norm, which introduces an additional factor $\sqrt{d}$. 

We introduce the soft-thresholding operator $\mathbb{S}_\alpha:\R^d\rightarrow\R^d$, $x\mapsto \mathbb{S}_\alpha(x)$  given by
{  \begin{equation}\label{eq:soft t}
(\mathbb{S}_\alpha(x))_i=\begin{cases}   
x_i-\alpha/2,&\alpha/2\leq x_i\\
0 ,& -\alpha/2<x_i<\alpha/2\\
x_i+\alpha/2,& x_i \leq -\alpha/2
\end{cases},
\end{equation}
}
which we shall use in the following proof:
\begin{proof}[Proof of Theorem \ref{th:th elast one maybe}]
For $x\in\R^d$, we can apply (ii), so that $F(x)z_x=b$ and $\|z_x\|_{\ell_1}\leq c_2\|b\|$, implying
%\in\R^d$ by $y:=F(x)^{-1}b$, so that we can derive
\begin{align*}
\alpha\|\mathscr{S}_\alpha(x)\| \leq  \|F(x)\mathscr{S}_\alpha(x)-b\|^2+ \alpha\|\mathscr{S}_\alpha(x)\|_{\ell_1} \leq   \alpha\|z_x\|_{\ell_1}
 \leq \alpha c_2 \|b\|.
\end{align*}
Thus, we have $\|\mathscr{S}_\alpha(x)\|\leq c_2 \|b\|$. 

The conditions \eqref{it:i} and \eqref{it:iii} imply
\begin{equation}\label{eq:A and so on}
\|F(x)^*F(x)-F(y)^*F(y)\| \leq 2c_1 c_3 \|x-y\|.
\end{equation}
It is well-known that 
\begin{equation}\label{eq:equality for S}
\mathscr{S}_\alpha(x) = \mathbb{S}_\alpha\big( \xi(x) \big), \qquad\text{where} \qquad \xi(x)= (I-F(x)^*F(x)))\mathscr{S}_\alpha(x)+ F(x)^*b,
\end{equation}
and $\mathbb{S}_\alpha$ is the soft-thresholding operator in \eqref{eq:soft t}, cf.~\cite{Daubechies:2004aa}. Note that $\xi$ can be bounded by
\begin{align*}
\|\xi(x)\| & = \|(I-F(x)^*F(x))\mathscr{S}_\alpha(x)\|+ \|F(x)^*b\|\\
& \leq  c_2\|b\|+c_1^2c_2\|b\| + c_1\|b\|\\
& = (c_1+c_2+c_1^2c_2)\|b\|.
\end{align*}
It is known, cf.~\cite{DeVore:1998aa} and \cite[Lemma 4.15]{:2010fk}, that the bound on $\xi$ implies
\begin{equation*}
\#\supp\big(\mathscr{S}_\alpha(x)\big) = \#\supp\big( \mathbb{S}_\alpha(\xi(x)) \big)\leq \frac{4}{\alpha^2}(c_1+c_2+c_1^2c_2)^2\|b\|^2.
\end{equation*}
The condition \eqref{it:iv} can be rewritten as $\| (I - F(x)^*F(x))y\|\leq \gamma \|y\|$, for suitably sparse $y$. Since soft-thresholding is nonexpansive, i.e., 
\begin{equation*}
\|\mathbb{S}_\alpha(x)-\mathbb{S}_\alpha(y)\|\leq \|x-y\|, 
\end{equation*}
the identity \eqref{eq:equality for S} then yields with \eqref{eq:A and so on}
\begin{align*}
\|\mathscr{S}_\alpha(x)-\mathscr{S}_\alpha(y)\|& \leq \|\mathscr{S}_\alpha(x)-\mathscr{S}_\alpha(y)+F(x)^*b-F(y)^*b \\
&\qquad - F(x)^*F(x)\mathscr{S}_\alpha(x)+F(y)^*F(y)\mathscr{S}_\alpha(y)\|\\
& \leq  \|(I-F(x)^*F(x))(\mathscr{S}_\alpha(x)-\mathscr{S}_\alpha(y))\| \\
&\qquad   +\|(F(y)^*F(y)-F(x)^*F(x))\mathscr{S}_\alpha(y)\|    + \|(F(x)^*-F(y)^*)b\|\\
& \leq \gamma\|\mathscr{S}_\alpha(x)-\mathscr{S}_\alpha(y)\| + 2c_1c_2c_3\|b\| \|x-y\|+ c_3\|x-y\|\|b\|,
\end{align*}
which implies
\begin{equation*}
\|\mathscr{S}_\alpha(x)-\mathscr{S}_\alpha(y)\| \leq  \frac{2c_1c_2+1}{1-\gamma}c_3\|b\| \|x-y\|.
\end{equation*}
Thus, $\mathscr{S}_\alpha$ is contractive and $x^{(j)}_\alpha$ converges towards a fixed point. 
\end{proof}

If $\|b\|$ is large, then the conditions in Theorem \ref{th:th elast one maybe} are extremely strong. For smaller $\|b\|$, on the other hand, we can find examples matching the requirements. Note also that the above proof reveals that $x_\alpha$ is at most {  $\lceil\frac{4}{\alpha^2}(c_1+c_2c_4)^2\|b\|^2 \rceil$}-sparse. 
\begin{example}\label{ex:2}
As in Example \ref{ex:1}, let $F(x)=A_1+\epsilon f(\|x-x_0\|)A_2$, so that $F(x)x=A(x)$ with $A$ as in Proposition \ref{th:example}. We additionally suppose that $n\leq d$, that $A_1$ is onto, and denote the smallest eigenvalue of {  $A_1 A_1^*$} by $\beta>0$. Then we can choose $c_1=\|A_1\|+\epsilon B \|A_2\|_2$ and $c_3=\epsilon L \|A_2\|$. If $\epsilon,B,\|A_2\|$ are sufficiently small, then the smallest eigenvalue of {  $F(x) F(x)^*$} is almost given by the smallest eigenvalue of {  $A_1 A_1^*$} and denoted by $\beta>0$. If {  $F(x)^\dagger = F(x)^* (F(x) F(x)^*)^{-1}$} denotes the pseudo-inverse of $F(x)$, then {  $\|F(x)^\dagger\|_2\leq c_1/\beta$}. For \eqref{it:ii}, we can define {  $y:=F(x)^\dagger b$}, so that $\|y\|_{\ell_1}\leq \sqrt{d}\|y\|\leq \sqrt{d} \frac{c_1}{\beta}\|b\|$ and $c_2\leq \sqrt{d} \frac{c_1}{\beta}$. Still, suppose that $\epsilon,B,\|A_2\|$ are sufficiently small and also assume that $A_1$ satisfies the RIP with constant $0<\delta<1$ for sufficiently large sparsity requirements in \eqref{it:iv}. Thus, the conditions of Theorem \ref{th:th elast one maybe} are satisfied if $\|b\|$ is sufficiently small. 
%
%
%\begin{align*}
%\|I-F(x)^*F(x)\|_2 &= \|I-(A^*_1+\epsilon f(\|x-x_0\|)A^*_2)(A_1+\epsilon f(\|x-x_0\|)A_2)\|_2\\
%& \leq \|I-A^*_1A_1-\epsilon f(\|x-x_0\|)A^*_1A_2-\epsilon f(\|x-x_0\|)A_2A^*_1-\epsilon^2 f^2(\|x-x_0\|)A_2A_2\|_2\\
%&\leq \|I-A^*_1A_1\|_2+3\epsilon B^2\|A_2\| \max((1+\delta),\|A_2\|)\\
%&\leq 2\delta+\delta^2+3\epsilon B^2\|A_2\| \max((1+\delta),\|A_2\|)\\
%\end{align*}
%We can choose 
\end{example}

%\begin{remark}
%%We have seen in Theorem \ref{th:th elast one maybe} and Example \ref{ex:2} that small data $b$ are beneficial to claim that the recursive scheme converges.
%There are some heuristics suggesting that such a fixed point $x_\alpha$ may not be too far from a solution $\hat{x}_\alpha$ in \eqref{eq:min relaxed}, but we have not been able yet to derive a formal mathematical proof. 
%\end{remark}
\begin{algorithm}
%\begin{minipage}{15cm}
%\SetLine
\KwSty{Quasi-linear iterative soft-thresholding}:\\
\KwIn{$F:\R^d\rightarrow \R^{n\times d}$, $b\in\R^n$}
Initialize $x^{(0)}$ as an arbitrary vector %with $\| x^{(0)}\|\leq R$ and $\|x^{(0)}-\hat{x}\|\leq r$ 
\\
% with \begin{equation*}H^{(0)}_0(k)=\left\langle f,\widetilde{\psi}^{(0)}_{0,k}\right\rangle\end{equation*}}
\For{$j=0,1,2,\ldots$ until some stopping criterion is met}{
\vspace{1ex}
\begin{equation*}
\hspace{-5ex}
x^{(j+1)}_\alpha:=\arg\min_{x\in\R^d}\mathcal{J}^S_\alpha(x,x^{(j)}_\alpha)=\mathbb{S}_\alpha \big( (I- F(x^{(j)}_\alpha)^* F(x^{(j)}_\alpha)) x^{(j)}_\alpha  + F(x^{(j)}_\alpha)^*b \big)
\end{equation*}
}
\KwOut{$x^{(1)}_\alpha$, $x^{(2)}_\alpha$, $\ldots$}
\caption{We propose to iteratively minimize the surrogate functional, which yields a simple iterative soft-thresholding scheme.}\label{algo:2}
\end{algorithm}

The recursive scheme in Theorem \ref{th:th elast one maybe} involving $\mathscr{S}_\alpha$ requires a minimization in each iteration step. To derive a more efficient scheme, we consider the surrogate functional
\begin{equation*}
\mathcal{J}^S_\alpha(x,a)=\|F(a)x-b\|^2+\alpha\|x\|_{\ell_1}+\|x-a\|^2-\|F(a)x-F(a)a\|^2.
\end{equation*}
We have $\mathcal{J}^S_\alpha(x,x)=\mathcal{J}_\alpha(x)$ and propose the iterative Algorithm \ref{algo:2}. In each iteration step, we minimize the surrogate functional {  in the first variable having the second one fixed with the previous iteration}, which only requires a simple soft-thresholding. 

Indeed, iterative soft-thresholding converges towards the fixed point $x_\alpha$:
\begin{theorem}\label{th:very last}
Suppose that the assumptions of Theorem \ref{th:th elast one maybe} are satisfied and let $x_\alpha$ be the $k$-sparse fixed point in \eqref{eq:fixy}.  We define $\hat{z}_\alpha := (I-F(x_\alpha)^*F(x_\alpha))x_\alpha) + F(x_\alpha)^*b$ and $K=\frac{4\|x_\alpha\|^2}{\alpha^2} + \frac{4c}{\alpha}C$, where $C=\sup_{1\leq l<d}(\sqrt{l+1}\|\hat{z}_\alpha-(\hat{z}_\alpha)_{\{l\}}\|_{\ell_2})$ and $c>0$ sufficiently large. Additionally assume that 
\begin{enumerate}[(a)]
%\item[(a)] $F(x_\alpha)$ satisfies the RIP order $k$ with $0<\delta<1$,
%\item[(b)] $x^{(j)}_\alpha$ is $K$-sparse,
\item \label{item:a} there is $0<\tilde{\gamma}<\gamma$ such that, for all $K+k$-sparse vectors $y\in\R^d$, 
\begin{equation}
(1-\tilde{\gamma})\|y\|^2\|F(x_\alpha)y\|^2\leq (1+\tilde{\gamma})\|y\|^2, \label{lastRIP}
\end{equation}
%\| (I - F(x_\alpha)^*F(x_\alpha))y\|\leq \tilde{\gamma} \|y\|$,
\item \label{item:b}  the constants satisfy $\tilde{\gamma}+(1+4c_1c_2)c_3\|b\|<\gamma$.
\end{enumerate}
Then by using $x^{(0)}_\alpha=0$ as initial vector, the iterative Algorithm \ref{algo:2} converges towards $x_\alpha$ with 
\begin{equation*}
 \|x^{(j)}_\alpha-x_\alpha\|\leq \gamma^j \|x_\alpha\|,\quad j=0,1,2,\ldots.
\end{equation*}
\end{theorem}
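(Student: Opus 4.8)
The plan is to run an induction on $j$ with a hypothesis stronger than the assertion: alongside $\|x^{(j)}_\alpha-x_\alpha\|\le\gamma^j\|x_\alpha\|$ I would also carry the side conditions $\#\supp(x^{(j)}_\alpha)\le K$ and $\|x^{(j)}_\alpha\|\le c_2\|b\|$, since these are exactly what makes the restricted isometry property \eqref{lastRIP} and the operator estimates of Theorem~\ref{th:th elast one maybe} applicable along the orbit. The first move is to phrase everything through soft-thresholding: by the characterization \eqref{eq:equality for S} the fixed point of \eqref{eq:fixy} is $x_\alpha=\mathbb{S}_\alpha(\hat{z}_\alpha)$, so $x_\alpha$ is a fixed point of the map $x\mapsto\mathbb{S}_\alpha\bigl((I-F(x)^*F(x))x+F(x)^*b\bigr)$ that Algorithm~\ref{algo:2} iterates, and one step reads $x^{(j+1)}_\alpha=\mathbb{S}_\alpha(z^{(j)})$ with $z^{(j)}:=(I-F(x^{(j)}_\alpha)^*F(x^{(j)}_\alpha))x^{(j)}_\alpha+F(x^{(j)}_\alpha)^*b$. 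Since $\mathbb{S}_\alpha$ is nonexpansive, $\|x^{(j+1)}_\alpha-x_\alpha\|\le\|z^{(j)}-\hat{z}_\alpha\|$, and the whole problem collapses to bounding $\|z^{(j)}-\hat{z}_\alpha\|$ and propagating the side conditions; the base case $j=0$ is trivial because $x^{(0)}_\alpha=0$.

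For the inductive step I would expand
\begin{align*}
z^{(j)}-\hat{z}_\alpha &= (I-F(x_\alpha)^*F(x_\alpha))(x^{(j)}_\alpha-x_\alpha)\\
&\quad+\bigl(F(x_\alpha)^*F(x_\alpha)-F(x^{(j)}_\alpha)^*F(x^{(j)}_\alpha)\bigr)x^{(j)}_\alpha+\bigl(F(x^{(j)}_\alpha)-F(x_\alpha)\bigr)^*b.
\end{align*}
The first term is at most $\tilde{\gamma}\|x^{(j)}_\alpha-x_\alpha\|$ by \eqref{lastRIP} rewritten as $\|(I-F(x_\alpha)^*F(x_\alpha))y\|\le\tilde{\gamma}\|y\|$ for $(K+k)$-sparse $y$, which is legitimate since $x^{(j)}_\alpha-x_\alpha$ is $(K+k)$-sparse by the support hypothesis together with the $k$-sparsity of $x_\alpha$. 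The remaining two terms I would estimate via \eqref{eq:A and so on}, assumption~\eqref{it:iii}, and $\|x^{(j)}_\alpha\|,\|x_\alpha\|\le c_2\|b\|$, obtaining $(1+4c_1c_2)c_3\|b\|\,\|x^{(j)}_\alpha-x_\alpha\|$. Summing and using assumption~\eqref{item:b} gives $\|z^{(j)}-\hat{z}_\alpha\|\le\bigl(\tilde{\gamma}+(1+4c_1c_2)c_3\|b\|\bigr)\|x^{(j)}_\alpha-x_\alpha\|<\gamma\|x^{(j)}_\alpha-x_\alpha\|\le\gamma^{j+1}\|x_\alpha\|$, hence $\|x^{(j+1)}_\alpha-x_\alpha\|\le\gamma^{j+1}\|x_\alpha\|$, which is the claimed geometric decay.

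It then remains to reinstate the two side conditions at step $j+1$. The norm bound follows from $\|x^{(j+1)}_\alpha\|=\|\mathbb{S}_\alpha(z^{(j)})\|\le\|z^{(j)}\|$ together with the restricted isometry hypothesis~\eqref{it:iv} of Theorem~\ref{th:th elast one maybe} applied to the $K$-sparse vector $x^{(j)}_\alpha$ and assumption~\eqref{it:i}, which yield $\|z^{(j)}\|\le\gamma\|x^{(j)}_\alpha\|+c_1\|b\|\le c_2\|b\|$ once $c_2$ is taken large enough; this parallels the bound $\|\mathscr{S}_\alpha(x)\|\le c_2\|b\|$ in the proof of Theorem~\ref{th:th elast one maybe}. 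The support bound is the delicate point. Here I would use the Stechkin-type counting estimate already invoked for Theorem~\ref{th:th elast one maybe} (\cite{DeVore:1998aa}, \cite[Lemma~4.15]{:2010fk}): any nonzero coordinate $i$ of $x^{(j+1)}_\alpha=\mathbb{S}_\alpha(z^{(j)})$ either lies in the support of $(\hat{z}_\alpha)_{\{l\}}$ — at most $l$ such indices — or has $|(z^{(j)})_i|>\alpha/2$ while $|(\hat{z}_\alpha)_i|$ does not exceed the $(l{+}1)$-st largest entry of $\hat{z}_\alpha$, which by the definition of $C$ is of order $C/l$; choosing $l$ of order $\tfrac{c}{\alpha}C$ forces $|(z^{(j)}-\hat{z}_\alpha)_i|>\alpha/4$ for every such $i$, so there are at most $\tfrac{16}{\alpha^2}\|z^{(j)}-\hat{z}_\alpha\|^2\le\tfrac{16}{\alpha^2}\|x_\alpha\|^2$ of them by the distance bound just proved. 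Adding the two contributions and absorbing numerical factors into the "sufficiently large" constant $c$ yields $\#\supp(x^{(j+1)}_\alpha)\le K$, closing the induction.

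The main obstacle is precisely this support control: the perturbation estimate in the second paragraph uses \eqref{lastRIP} only at $(K+k)$-sparse arguments, so one must rule out the iterates ever inflating their support past $K$ — and this is where the compressibility constant $C$ of the \emph{limiting} vector $\hat{z}_\alpha$ (finite exactly because $x_\alpha$ is genuinely sparse and the data/operators are benign) does the real work, guaranteeing that a soft-threshold of a nearby vector $z^{(j)}$ cannot pick up many spurious large coordinates. Everything else is a controlled rerun of the contraction argument of Theorem~\ref{th:th elast one maybe}; the only further point needing care, handled as there, is that $I-F(x_\alpha)^*F(x_\alpha)$ does not preserve coordinate subspaces, so \eqref{lastRIP} is applied after restriction to the active support, which is harmless for the estimates above.
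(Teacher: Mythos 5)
Your proposal is correct and follows essentially the same route as the paper's proof: induction carrying both the distance bound and the support bound, nonexpansiveness of $\mathbb{S}_\alpha$ reducing everything to $\|z^{(j)}-\hat{z}_\alpha\|$, the split into a term controlled by \eqref{lastRIP} on $(K+k)$-sparse vectors plus Lipschitz-perturbation terms, and the Stechkin-type counting argument (which the paper simply cites as \cite[Lemma~4.15]{:2010fk}) to keep the supports below $K$. One small correction: the side condition $\|x^{(j)}_\alpha\|\leq c_2\|b\|$ cannot be obtained by ``taking $c_2$ large enough'' since $c_2$ is fixed by hypothesis (\ref{it:ii}) of Theorem~\ref{th:th elast one maybe}, but it is also unnecessary --- the triangle inequality with the induction hypothesis and $\|x_\alpha\|\leq c_2\|b\|$ gives $\|x^{(j)}_\alpha\|\leq 2c_2\|b\|$, which is exactly what the factor $4c_1c_2$ in condition (\ref{item:b}) is designed to absorb, as in the paper's argument.
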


Note that the above $k$ is at most {  $\lceil \frac{4}{\alpha^2}(c_1+c_2+c_1^2c_2)^2\|b\|^2 \rceil$}. Also, it may be possible to choose $\gamma$ a little bigger than necessary to ensure $\tilde{\gamma}<\gamma$. Condition (b) can then be satisfied when the magnitude of the data $b$ is sufficiently small. Moreover, if constants are suitably chosen, Example \ref{ex:2} also provides a map $F$ that satisfies the assumptions of Theorem \ref{th:very last} when $\|b\|$ is small.
\begin{proof}
We use induction and observe that the case $j=0$ is {  trivially verified}. Next, we suppose that $x^{(j)}_\alpha$ satisfies $\|x^{(j)}_\alpha-x_\alpha\|\leq \gamma^j \|x_\alpha\|$ and that it has at most $K$ nonzero entries. Our aim is now to verify that $x^{(j+1)}$ also satisfies the support condition and $\|x^{(j+1)}_\alpha-x_\alpha\|\leq \gamma^{j+1} \|x_\alpha\|$. To simplify notation let 
\begin{equation}\label{eq:f}
f(x,y):=(I-F(x)^*F(x))y+F(x)^*b,
%v^{(j)}_\alpha & = (I-F(x_\alpha)^*F(x_\alpha))x^{(j)}_\alpha+F(x_\alpha)^*b\\
%w^{(j)}_\alpha & = (I-F(x^{(j)}_\alpha)^*F(x^{(j)}_\alpha))x^{(j)}_\alpha+F(x^{(j)}_\alpha)^*b.
\end{equation}
so that $\hat{z}_\alpha=f(x_\alpha,x_\alpha)$. It will be useful later to derive bounds for both terms $\|f(x_\alpha,x_\alpha)-f(x_\alpha,x^{(j)}_\alpha)\|$ and $\|f(x^{(j)}_\alpha,x^{(j)}_\alpha)-f(x^{(j)}_\alpha,x^{(j)}_\alpha)\|$. Therefore, we start to estimate
\begin{equation}\label{eq:I}
\|f(x_\alpha,x_\alpha)-f(x_\alpha,x^{(j)}_\alpha)\|  = \|(I-F(x_\alpha)^*F(x_\alpha))(x_\alpha-x^{(j)}_\alpha)\|
 \leq \tilde{\gamma} \gamma^j\|x_\alpha\|,
\end{equation}
where we have used \eqref{item:a}  in the form $\| (I - F(x_\alpha)^*F(x_\alpha))y\|\leq \tilde{\gamma} \|y\|$ and the induction hypothesis. 

Next, we take care of $\|f(x_\alpha,x^{(j)}_\alpha)-f(x^{(j)}_\alpha,x^{(j)}_\alpha)\|$ and derive
\begin{align*}
\|f(x_\alpha,x^{(j)}_\alpha)-f(x^{(j)}_\alpha,x^{(j)}_\alpha)\| & \leq \|(F(x_\alpha)^*-F(x^{(j)}_\alpha)^*)b\|
\\
&\qquad +\|(F(x^{(j)}_\alpha)^*F(x^{(j)}_\alpha)-F(x_\alpha)^*F(x_\alpha))x^{(j)}_\alpha\|\\
& \leq c_3\gamma^j\|x_\alpha\|\|b\|+ \|(F(x^{(j)}_\alpha)^*F(x^{(j)}_\alpha)-F(x^{(j)}_\alpha)^*F(x_\alpha))x^{(j)}_\alpha\|\\
& \qquad +\|(F(x^{(j)}_\alpha)^*F(x_\alpha)-F(x_\alpha)^*F(x_\alpha))x^{(j)}_\alpha\|\\
& \leq c_3\gamma^j\|x_\alpha\|\|b\|+2c_1c_3 \gamma^j \|x_\alpha\|  \|x^{(j)}_\alpha\|\\
& = \gamma^j\|x_\alpha\| c_3\big( \|b\|+2c_1  \|x^{(j)}_\alpha\|\big).
\end{align*}
%With the help of (a) and the minimizing property of $x_\alpha$ compared to the zero vector, we can bound $x_\alpha$ by

By using \eqref{it:ii} and the minimizing property of $x_\alpha$, we derive
\begin{equation*}
 \|x_\alpha\| \leq  \|x_\alpha\|_{\ell_1} \leq \|z_{x_\alpha}\|_{\ell_1} \leq c_2 \|b\|.
\end{equation*}
%
%\begin{align*}
% \|x_\alpha\| 
% & \leq (1-\delta)^{-1} \| F(x_\alpha)x_\alpha\|\\
% & \leq (1-\delta)^{-1} (\| F(x_\alpha)x_\alpha-b\|+\|b\|)\\
% & \leq (1-\delta)^{-1} (\sqrt{\mathcal{J}_\alpha(x_\alpha)}+\|b\|)\\
% & \leq(1-\delta)^{-1}\|b\|,
%\end{align*}
The triangular inequality then yields $\|x^{(j)}_\alpha\|\leq  \gamma^j c_2\|b\|+c_2\|b\|$. Thus, we obtain the estimate
\begin{equation}\label{eq:II}
\|f(x_\alpha,x^{(j)}_\alpha)-f(x^{(j)}_\alpha,x^{(j)}_\alpha)\|\leq \gamma^j\|x_\alpha\| c_3\|b\|\big(1+2c_1 c_2(1+ \gamma^j )\big).
\end{equation}

%Since $x^{(j)}_\alpha$ is supposed to be $K$-sparse, applying (c), (iv), (b), (i), and (iii) yields 
%\begin{align*}
%\|z_\alpha-f(x^{(j)}_\alpha,x^{(j)}_\alpha)& \leq \|f(x_\alpha,x_\alpha) - f(x_\alpha,x^{(j)}_\alpha) \|+ \|f(x_\alpha,x^{(j)}_\alpha) -f(x^{(j)}_\alpha,x^{(j)}_\alpha) \|\\
%& \leq \|(I-F(x_\alpha)^*F(x_\alpha))(x_\alpha-x^{(j)}_\alpha)\| + \|(F(x^{(j)}_\alpha)^*-F(x_\alpha)^*)b\|\\
%& \qquad + \|(F(x^{(j)}_\alpha)^*F(x^{(j)}_\alpha)-F(x_\alpha)^*F(x_\alpha))x^{(j)}_\alpha\|\\
%& \leq \tilde{\gamma} \gamma^j \|x_\alpha\| + c_3\gamma^j\|x_\alpha\|\|b\|+ \|(F(x^{(j)}_\alpha)^*F(x^{(j)}_\alpha)-F(x^{(j)}_\alpha)^*F(x_\alpha))x^{(j)}_\alpha\|\\
%& \qquad +\|(F(x^{(j)}_\alpha)^*F(x_\alpha)-F(x_\alpha)^*F(x_\alpha))x^{(j)}_\alpha\|\\
%& \leq \tilde{\gamma} \gamma^j \|x_\alpha\| + c_3\gamma^j\|x_\alpha\|\|b\|+2c_1c_3 \gamma^j \|x_\alpha\|  \|x^{(j)}_\alpha\|\\
%& =  \gamma^j \|x_\alpha\| ( \tilde{\gamma}+c_3\|b\|+2c_1 c_3  \|x^{(j)}_\alpha\|).
%\end{align*}
According to \eqref{eq:I} and \eqref{eq:II}, the condition \eqref{item:b} yields 
\begin{equation}\label{eq:same way}
\|\hat{z}_\alpha-f(x^{(j)}_\alpha,x^{(j)}_\alpha)\| \leq \gamma^j\|x_\alpha\|( \tilde{\gamma}+c_3\|b\|(1+2c_1 c_2(1+\gamma^j  )))
 \leq \gamma^{j+1}\|x_\alpha\|.
\end{equation}
Results in \cite[Lemma 4.15]{:2010fk} with $x^{(j+1)}_\alpha=\mathbb{S}_\alpha(f(x^{(j)}_\alpha,x^{(j)}_\alpha))$ imply that there is a constant $c>0$ such that
\begin{equation*}
\#\supp(x^{(j+1)}_\alpha) \leq \frac{4\gamma^{2j+2}\|x_\alpha\|^2}{\alpha^2} + \frac{4c}{\alpha}C,
\end{equation*}
where $C=\sup_{1\leq l<d}(\sqrt{l+1}\|z_\alpha-(z_\alpha)_{\{l\}}\|_{\ell_2})$. Since the above right-hand side is smaller than $K$, we have the desired support estimate.

% According to $\mathbb{S}_\alpha$ being nonexpansive and $x_\alpha$ and $x^{(j)}_\alpha$ being $k$- and $K$- sparse, respectively, (c) yields
%\begin{align*}
%\|x_\alpha-\mathbb{S}_\alpha(f(x_\alpha,x^{(j)}_\alpha)_{\{ k\}})\|  & \leq \|(z_\alpha)_{\{ k\}}- f(x_\alpha,x^{(j)}_\alpha)_{\{ k\}}\|\\
%& = \| (I-F(x_\alpha)^*F(x_\alpha))(x_\alpha - x^{(j)}_\alpha)\|\\
%& \leq \tilde{\gamma} \gamma^j \|x_\alpha\|.
%\end{align*}
%
%
%To derive the error estimate, we use \eqref{eq:f} and obtain

Next, we take care of the error bounds. 
Since $x_\alpha$ is a fixed point of \eqref{eq:fixy}, we have $x_\alpha=\mathbb{S}_\alpha(\hat{z}_\alpha)$, which we have already used in \eqref{eq:equality for S}. The nonexpansiveness of $\mathbb{S}_\alpha$ yields with $\hat{z}_\alpha=f(x_\alpha,x_\alpha)$
\begin{align*}
\|x_\alpha-x^{(j+1)}_\alpha\| & \leq \|\mathbb{S}_\alpha(f(x_\alpha,x_\alpha)) - \mathbb{S}_\alpha(f(x_\alpha,x^{(j)}_\alpha))\|+\| \mathbb{S}_\alpha(f(x_\alpha,x^{(j)}_\alpha))-\mathbb{S}_\alpha(f(x^{(j)}_\alpha,x^{(j)}_\alpha))\|\\
& \leq \|f(x_\alpha,x_\alpha) -f(x_\alpha,x^{(j)}_\alpha)\|+\| f(x_\alpha,x^{(j)}_\alpha)-f(x^{(j)}_\alpha,x^{(j)}_\alpha)\|.
\end{align*}
The same way as for \eqref{eq:same way}, we use the bounds in \eqref{eq:I} and  \eqref{eq:II} with \eqref{item:b} to derive
\begin{equation*}
\|x_\alpha-x^{(j+1)}_\alpha\| \leq \gamma^j \|x_\alpha\|  ( \tilde{\gamma}+c_3\|b\|(1+4c_1 c_2))
 \leq \gamma^{j+1} \|x_\alpha\|,
 \end{equation*}
so that we can conclude the proof.
%At the beginning of this proof, we have already estimated 
%\begin{equation*}
%\|f(x_\alpha,x^{(j)}_\alpha) -f(x^{(j)}_\alpha,x^{(j)}_\alpha) \| \leq c_3\gamma^j\|x_\alpha\|\|b\|+2c_1c_3 \gamma^j \|x_\alpha\|  \|x^{(j)}_\alpha\|
%\end{equation*}
%
%We also obtain
%\begin{align*}
%\|z_\alpha-v^{(j)}_\alpha\| &\leq \|(I-F(x_\alpha)^*)(b-F(x_\alpha)x_\alpha) - (I-F(x_\alpha)^*F(x_\alpha))x^{(j)}_\alpha+F(x_\alpha)^*b\|\\
%& \leq \| (I-F(x_\alpha)^*F(x_\alpha) \|
%\end{align*}
\end{proof}

It remains to verify that the output $x_\alpha$ of the iterative soft-thresholding scheme is close to the minimizer $\hat{x}_\alpha$ of \eqref{eq:min relaxed}:
\begin{theorem}\label{theorem:very last now}
Suppose that the assumptions of Theorem \ref{th:very last} hold, that there is  a $K$-sparse minimizer $\hat{x}_\alpha$ of \eqref{eq:min relaxed}, and that $\frac{c_2c_3}{\sqrt{1-\tilde{\gamma}}}\|b\|<1$ holds, then we have
\begin{equation*}%\label{eq:delicate}
\|x_\alpha - \hat{x}_\alpha\| \leq \frac{\sqrt{\alpha c_2\|b\|}}{a}  + \frac{c_1+c_3\|\hat{x}\|}{a}\|\hat{x}_\alpha -\hat{x}\| ,
\end{equation*}
where $\hat{x}$ satisfies $F(\hat{x})\hat{x}=b$ and $a=\sqrt{1-\tilde{\gamma}}-c_2c_3\|b\|$.
\end{theorem}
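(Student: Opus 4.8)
The plan is to compare the two vectors through the single linear map $F(x_\alpha)$, exploiting that $x_\alpha$ is a genuine minimizer of the convex functional $y\mapsto \|F(x_\alpha)y-b\|^2+\alpha\|y\|_{\ell_1}$ (this is precisely the fixed-point relation \eqref{eq:fixy}), whereas $\hat{x}_\alpha$ is only assumed to minimize the nonconvex $\mathcal{J}_\alpha$ from \eqref{eq:min relaxed}. First I would invoke the restricted isometry bound \eqref{lastRIP} of Theorem \ref{th:very last}: since $x_\alpha$ is $k$-sparse (with $k$ at most $\lceil\tfrac{4}{\alpha^2}(c_1+c_2+c_1^2c_2)^2\|b\|^2\rceil$, as recorded after that theorem) and $\hat{x}_\alpha$ is $K$-sparse by hypothesis, the difference $x_\alpha-\hat{x}_\alpha$ is $(K+k)$-sparse, so $\sqrt{1-\tilde{\gamma}}\,\|x_\alpha-\hat{x}_\alpha\|\le \|F(x_\alpha)(x_\alpha-\hat{x}_\alpha)\|$. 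A triangle inequality then splits the right-hand side into the residual $\|F(x_\alpha)x_\alpha-b\|$ at the fixed point and the mismatch $\|F(x_\alpha)\hat{x}_\alpha-b\|$, and the whole argument reduces to estimating these two quantities.

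For the residual at $x_\alpha$ I would test the minimizing property \eqref{eq:fixy} against the competitor $z_{x_\alpha}$ supplied by \eqref{it:ii}, obtaining $\|F(x_\alpha)x_\alpha-b\|^2+\alpha\|x_\alpha\|_{\ell_1}\le \|F(x_\alpha)z_{x_\alpha}-b\|^2+\alpha\|z_{x_\alpha}\|_{\ell_1}=\alpha\|z_{x_\alpha}\|_{\ell_1}\le \alpha c_2\|b\|$. This single inequality does double duty: it gives $\|F(x_\alpha)x_\alpha-b\|\le \sqrt{\alpha c_2\|b\|}$ and also $\|x_\alpha\|\le\|x_\alpha\|_{\ell_1}\le c_2\|b\|$, and since its left side equals $\mathcal{J}_\alpha(x_\alpha)$, the minimality of $\hat{x}_\alpha$ yields $\alpha\|\hat{x}_\alpha\|_{\ell_1}\le\mathcal{J}_\alpha(\hat{x}_\alpha)\le\mathcal{J}_\alpha(x_\alpha)\le\alpha c_2\|b\|$, hence $\|\hat{x}_\alpha\|\le c_2\|b\|$ as well. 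For the mismatch term I would insert $F(\hat{x}_\alpha)\hat{x}_\alpha$ and write $\|F(x_\alpha)\hat{x}_\alpha-b\|\le \|(F(x_\alpha)-F(\hat{x}_\alpha))\hat{x}_\alpha\|+\|F(\hat{x}_\alpha)\hat{x}_\alpha-b\|$; the first summand is at most $c_3\|x_\alpha-\hat{x}_\alpha\|\,\|\hat{x}_\alpha\|\le c_2c_3\|b\|\,\|x_\alpha-\hat{x}_\alpha\|$ by \eqref{it:iii} and the bound just obtained, while the second, using $F(\hat{x})\hat{x}=b$ together with \eqref{it:i} and \eqref{it:iii}, splits as $\|F(\hat{x}_\alpha)(\hat{x}_\alpha-\hat{x})\|+\|(F(\hat{x}_\alpha)-F(\hat{x}))\hat{x}\|\le (c_1+c_3\|\hat{x}\|)\|\hat{x}_\alpha-\hat{x}\|$.

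Collecting these estimates produces $\sqrt{1-\tilde{\gamma}}\,\|x_\alpha-\hat{x}_\alpha\|\le \sqrt{\alpha c_2\|b\|}+c_2c_3\|b\|\,\|x_\alpha-\hat{x}_\alpha\|+(c_1+c_3\|\hat{x}\|)\|\hat{x}_\alpha-\hat{x}\|$; the standing hypothesis $\tfrac{c_2c_3}{\sqrt{1-\tilde{\gamma}}}\|b\|<1$ ensures $a=\sqrt{1-\tilde{\gamma}}-c_2c_3\|b\|>0$, so moving the middle term to the left and dividing by $a$ gives exactly the asserted bound. This is a short argument and I do not expect a serious obstacle; the only points needing a little care are the sparsity bookkeeping that licenses the use of \eqref{lastRIP} (one must check that the sparsity level $K$ of $\hat{x}_\alpha$ and the sparsity level $k$ of the fixed point $x_\alpha$ really put $x_\alpha-\hat{x}_\alpha$ in the $(K+k)$-sparse regime covered by condition \eqref{item:a}) and choosing the splitting of the mismatch term so that the constants $c_1$, $c_3\|\hat{x}\|$ and $c_2c_3\|b\|$ emerge exactly as in the statement.
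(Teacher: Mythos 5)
Your proposal is correct and follows essentially the same route as the paper: the restricted isometry bound \eqref{lastRIP} applied to the $(K+k)$-sparse difference $x_\alpha-\hat{x}_\alpha$, the bound $\|\hat{x}_\alpha\|\le c_2\|b\|$ via the chain $\mathcal{J}_\alpha(\hat{x}_\alpha)\le\mathcal{J}_\alpha(x_\alpha)\le\alpha\|z_{x_\alpha}\|_{\ell_1}$, the residual estimate $\|F(x_\alpha)x_\alpha-b\|\le\sqrt{\alpha c_2\|b\|}$, and the splitting of $\|F(\hat{x}_\alpha)\hat{x}_\alpha-b\|$ through $b=F(\hat{x})\hat{x}$. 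The only cosmetic difference is that the paper absorbs the $c_2c_3\|b\|\,\|x_\alpha-\hat{x}_\alpha\|$ term via a reverse triangle inequality up front to produce the constant $a$, whereas you collect it at the end and move it across; the inequalities are identical.
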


%\begin{theorem}\label{theorem:very last now}
%Suppose that the assumptions of Theorem \ref{th:very last} hold and that there is $a>0$ such that
%\begin{equation}\label{eq:delicate}
%a\|x_\alpha - \hat{x}_\alpha\| \leq \|F(x_\alpha)x_\alpha-F(\hat{x}_\alpha)\hat{x}_\alpha\|,
%\end{equation}
%for all $0<\alpha<1$, where $\hat{x}$ is a minimizer of \eqref{eq:min l1} and $\hat{x}_\alpha$ is a minimizer of \eqref{eq:min relaxed}. Then we have
%\begin{equation*}
%\|x_\alpha - \hat{x}_\alpha\| \leq \frac{\sqrt{\alpha c_2\|b\|}}{a}  + \frac{c_1+c_3\|\hat{x}\|}{a}\|\hat{x}_\alpha -\hat{x}\| .
%\end{equation*}
%\end{theorem}
Note that Proposition \ref{prop:1} yields that the minimizer $\hat{x}_\alpha$ can approximate $\hat{x}$, so that $\|\hat{x}_\alpha -\hat{x}\|$ can become small and, hence,  $\|x_\alpha - \hat{x}_\alpha\|$ must be small. It should be mentioned though that the assumptions of Theorem \ref{th:very last} depend on $\alpha$ because its magnitude steers the sparsity of $x_\alpha$. Therefore, letting $\alpha$ tend to zero  is quite delicate because the assumptions become stronger. Indeed, taking the limit requires that condition \eqref{it:iv} in Theorem \ref{th:th elast one maybe} holds for all $y\in\R^d$, not just for sparse vectors, and the same is required for condition \eqref{item:a} in Theorem \ref{th:very last}. 
%Also, \eqref{eq:delicate} cannot be deduced from actions of $F$ on sparse vectors in this case but must be checked in some other way. 
%\begin{proof}[Proof of Theorem \ref{theorem:very last now}]
%A zero addition yields
%\begin{align*}
%\|x_\alpha - \hat{x}_\alpha\| & \leq \frac{1}{a}\|F(x_\alpha)x_\alpha-F(\hat{x}_\alpha)\hat{x}_\alpha\|\\
%&\leq \frac{1}{a} (\|F(x_\alpha)x_\alpha-b\|+\|F(\hat{x}_\alpha)\hat{x}_\alpha-b\|).
%\end{align*}
%and we shall now bound both terms on the right-hand side separately. 
%The minimizing property and (ii) in Theorem \ref{th:th elast one maybe} yield
%\begin{align*}
%\|F(x_\alpha)x_\alpha-b\|^2 & \leq  \alpha \|z_{x_\alpha}\|_{\ell_1}-\alpha\|x_\alpha\|_{\ell_1}\\
%& \leq \alpha c_2\|b\|.
%\end{align*}
%The second term is bounded by
%%\begin{align*}
%%\|x_\alpha - \hat{x}_\alpha\| & \leq \frac{1}{a}\|F(x_\alpha)x_\alpha-F(\hat{x}_\alpha)\hat{x}_\alpha\|\\
%%&\leq \frac{1}{a} (\|F(x_\alpha)x_\alpha-b\|+\|F(\hat{x}_\alpha)\hat{x}_\alpha-b\|)\\
%%& \leq \frac{1}{a} ( \sqrt{\alpha c_2\|b\|}+(c_1+c_3\|\hat{x}\|)\|\hat{x}_\alpha -\hat{x}\| )
%%\end{align*}
%\begin{align*}
%\|F(\hat{x}_\alpha)\hat{x}_\alpha-b\| & \leq \|F(\hat{x}_\alpha)\hat{x}_\alpha-F(\hat{x}_\alpha)\hat{x}\|+\|F(\hat{x}_\alpha)\hat{x}-F(\hat{x})\hat{x}\|\\
%& \leq c_1 \|\hat{x}_\alpha -\hat{x}\|+c_3\|\hat{x}\|\|\hat{x}_\alpha -\hat{x}\|\\
%& = (c_1+c_3\|\hat{x}\|)\|\hat{x}_\alpha -\hat{x}\|,
%\end{align*}
%so that we can conclude the proof. 
%\end{proof}
%The statement of the above theorem is delicate because we really want to restrict \eqref{eq:delicate} to sparse vectors.
\begin{proof}[Proof of Theorem \ref{theorem:very last now}]
We first bound $\hat{x}_\alpha$ by
\begin{align*}
\alpha \|\hat{x}_\alpha\|  \leq  \|F(\hat{x}_\alpha)\hat{x}_\alpha-b\|^2+  \alpha \|\hat{x}_\alpha\|_{\ell_1}
 \leq \|F({x}_\alpha){x}_\alpha-b\|^2+  \alpha \|{x}_\alpha\|
 \leq \alpha \|z_{x_\alpha}\|_{\ell_1}.
\end{align*}
Therefore, we have $\|\hat{x}_\alpha\|\leq c_2\|b\|$. Since $\hat{x}_\alpha$ is $K$-sparse, we derive
\begin{align*}
\|F(x_\alpha)x_\alpha - F(\hat{x}_\alpha)\hat{x}_\alpha\| &\geq \|F(x_\alpha)x_\alpha - F(x_\alpha)\hat{x}_\alpha\| - \|F(x_\alpha)\hat{x}_\alpha - F(\hat{x}_\alpha)\hat{x}_\alpha\| \\
& \geq \sqrt{1-\tilde{\gamma}} \|x_\alpha-\hat{x}_\alpha\| -c_3\|x_\alpha-\hat{x}_\alpha\|\|\hat{x}_\alpha\|.
%& = a \|x_\alpha-\hat{x}_\alpha\|,
\end{align*}
These computations and a zero addition imply
\begin{align*}
\|x_\alpha - \hat{x}_\alpha\| & \leq \frac{1}{a}\|F(x_\alpha)x_\alpha-F(\hat{x}_\alpha)\hat{x}_\alpha\|\\
&\leq \frac{1}{a} (\|F(x_\alpha)x_\alpha-b\|+\|F(\hat{x}_\alpha)\hat{x}_\alpha-b\|).
\end{align*}
%\begin{equation*}%\label{eq:hjk}
%\|F(x_\alpha)x_\alpha - F(\hat{x}_\alpha)\hat{x}_\alpha\| \geq a \|x_\alpha-\hat{x}_\alpha\|.
%\end{equation*}
%%where $a=\sqrt{1-\tilde{\gamma}}-c_2c_3\|b\|$.
% yields
We shall now bound both terms on the right-hand side separately. 
The minimizing property and \eqref{it:ii} in Theorem \ref{th:th elast one maybe} yield
\begin{align*}
\|F(x_\alpha)x_\alpha-b\|^2  \leq  \alpha \|z_{x_\alpha}\|_{\ell_1}-\alpha\|x_\alpha\|_{\ell_1}
 \leq \alpha c_2\|b\|.
\end{align*}
The second term is bounded by
%\begin{align*}
%\|x_\alpha - \hat{x}_\alpha\| & \leq \frac{1}{a}\|F(x_\alpha)x_\alpha-F(\hat{x}_\alpha)\hat{x}_\alpha\|\\
%&\leq \frac{1}{a} (\|F(x_\alpha)x_\alpha-b\|+\|F(\hat{x}_\alpha)\hat{x}_\alpha-b\|)\\
%& \leq \frac{1}{a} ( \sqrt{\alpha c_2\|b\|}+(c_1+c_3\|\hat{x}\|)\|\hat{x}_\alpha -\hat{x}\| )
%\end{align*}
\begin{align*}
\|F(\hat{x}_\alpha)\hat{x}_\alpha-b\| & \leq \|F(\hat{x}_\alpha)\hat{x}_\alpha-F(\hat{x}_\alpha)\hat{x}\|+\|F(\hat{x}_\alpha)\hat{x}-F(\hat{x})\hat{x}\|\\
& \leq c_1 \|\hat{x}_\alpha -\hat{x}\|+c_3\|\hat{x}\|\|\hat{x}_\alpha -\hat{x}\|\\
& = (c_1+c_3\|\hat{x}\|)\|\hat{x}_\alpha -\hat{x}\|,
\end{align*}
so that we can conclude the proof. 
\end{proof}

Alternatively, we can also bound the distance between $x_\alpha$ and $\hat{x}$:
\begin{proposition}
Suppose that the assumptions of Theorem \ref{th:very last} hold, that we can replace $x_\alpha$ in condition (a) of the latter theorem with some $K$-sparse $\hat{x}\in\R^d$ satisfying $F(\hat{x})\hat{x}=b$, and that $\frac{c_2c_3}{\sqrt{1-\tilde{\gamma}}}\|b\|<1$ holds, then we have
\begin{equation*}%\label{eq:delicate}
\|x_\alpha - \hat{x}\| \leq \frac{\sqrt{\alpha c_2\|b\|}}{\sqrt{1-\tilde{\gamma}}-c_2 c_3\|b\|}.
\end{equation*}
\end{proposition}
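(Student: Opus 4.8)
The argument is a streamlined variant of the proof of Theorem~\ref{theorem:very last now}: since the reference vector $\hat{x}$ now satisfies the data equation $F(\hat{x})\hat{x}=b$ exactly, the term measuring the residual at $\hat{x}$ drops out and the estimate collapses to its leading contribution. First I would recall the two a priori bounds already available under the standing hypotheses. The minimizing property of $x_\alpha$ together with condition~\eqref{it:ii} of Theorem~\ref{th:th elast one maybe} gives $\|x_\alpha\|\le\|x_\alpha\|_{\ell_1}\le\|z_{x_\alpha}\|_{\ell_1}\le c_2\|b\|$, and comparing the functional in \eqref{eq:fixy} at $x_\alpha$ with its value at $z_{x_\alpha}$ yields the residual estimate $\|F(x_\alpha)x_\alpha-b\|^2\le\alpha\|z_{x_\alpha}\|_{\ell_1}-\alpha\|x_\alpha\|_{\ell_1}\le\alpha c_2\|b\|$, exactly as in the proof of Theorem~\ref{theorem:very last now}.

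Next I would produce the lower bound. Since $x_\alpha$ is $k$-sparse and $\hat{x}$ is $K$-sparse, the difference $x_\alpha-\hat{x}$ is $(k+K)$-sparse, so the modified form of condition~(a) of Theorem~\ref{th:very last} --- namely \eqref{lastRIP} with $\hat{x}$ in place of $x_\alpha$ --- applies and, using $F(\hat{x})\hat{x}=b$, gives
\[
\sqrt{1-\tilde{\gamma}}\,\|x_\alpha-\hat{x}\|\le\|F(\hat{x})(x_\alpha-\hat{x})\|=\|F(\hat{x})x_\alpha-b\|.
\]
For the right-hand side I would insert $F(x_\alpha)x_\alpha$ by a zero addition, bound the resulting perturbation term via the Lipschitz condition~\eqref{it:iii} and $\|x_\alpha\|\le c_2\|b\|$, and invoke the residual estimate:
\[
\|F(\hat{x})x_\alpha-b\|\le\|(F(\hat{x})-F(x_\alpha))x_\alpha\|+\|F(x_\alpha)x_\alpha-b\|\le c_2c_3\|b\|\,\|x_\alpha-\hat{x}\|+\sqrt{\alpha c_2\|b\|}.
\]
Combining the two displays gives $(\sqrt{1-\tilde{\gamma}}-c_2c_3\|b\|)\|x_\alpha-\hat{x}\|\le\sqrt{\alpha c_2\|b\|}$, and the hypothesis $\frac{c_2c_3}{\sqrt{1-\tilde{\gamma}}}\|b\|<1$ ensures that the coefficient on the left is strictly positive, so dividing through yields the claimed estimate.

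Nothing in this chain is delicate beyond bookkeeping: the only point requiring attention is verifying that $x_\alpha-\hat{x}$ lies in the sparsity range $k+K$ covered by the restricted isometry hypothesis and that the hypothesis genuinely permits $\hat{x}$ as the base point of \eqref{lastRIP}; all the scalar estimates are taken verbatim from the proofs of Theorems~\ref{th:th elast one maybe} and~\ref{theorem:very last now}. There is effectively no ``hard part'' --- the proposition is the noiseless, exact-feasibility specialization of Theorem~\ref{theorem:very last now}.
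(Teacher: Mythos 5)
Your proposal is correct and follows essentially the same route as the paper's proof: apply the restricted isometry at the base point $\hat{x}$ to the $(k+K)$-sparse difference $x_\alpha-\hat{x}$, insert $F(x_\alpha)x_\alpha$ by a zero addition, control the perturbation term via the Lipschitz condition and $\|x_\alpha\|\leq c_2\|b\|$, and bound the residual by $\sqrt{\alpha c_2\|b\|}$ using the minimizing property of $x_\alpha$ against $z_{x_\alpha}$. The only cosmetic difference is that the paper keeps $\alpha\|x_\alpha\|_{\ell_1}$ inside the square root rather than subtracting it, which is the same estimate.
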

\begin{proof}
We can estimate
\begin{align*}
\|x_\alpha - \hat{x}\| & \leq \frac{1}{\sqrt{1-\tilde{\gamma}}} \|F(\hat{x}) x_\alpha - F(\hat{x})\hat{x}  \|\\
& \leq  \frac{1}{\sqrt{1-\tilde{\gamma}}} \big( \|F(\hat{x}) x_\alpha - F(x_\alpha)x_\alpha\| + \|F(x_\alpha)x_\alpha-F(\hat{x})\hat{x}  \|\big)\\
& \leq  \frac{1}{\sqrt{1-\tilde{\gamma}}} \big( c_3\|x_\alpha\|\|x_\alpha-\hat{x}\| +  \sqrt{\|F(x_\alpha)x_\alpha-b\|^2+\alpha\|x_\alpha\|_{\ell_1}}  \big)\\
&  \leq  \frac{1}{\sqrt{1-\tilde{\gamma}}} \big( c_3c_2\|b\| \|x_\alpha-\hat{x}\|+  \sqrt{\alpha\|z_{x_\alpha}\|_{\ell_1}}  \big)\\
&  \leq  \frac{1}{\sqrt{1-\tilde{\gamma}}} \big( c_3c_2\|b\| \|x_\alpha-\hat{x}\|+  \sqrt{\alpha c_2\|b\|}  \big).
\end{align*}
From here on, some straight-forward calculations yield the required statement.
\end{proof}

\subsection{Numerical experiments for iterative thresholding}\label{sec:soft numerics}

Theorem \ref{th:very last} provides a simple thresholding algorithm to compute the fixed point $x_\alpha$ in \eqref{eq:fixy} that is more efficient than the recursive scheme in Theorem \ref{th:th elast one maybe}. To support Theorem \ref{theorem:very last now}, we shall check numerically that $x_\alpha$ is indeed close to a minimizer $\hat{x}_\alpha$ of \eqref{eq:min relaxed}.

The quasi-linear measurements are taken from the Examples \ref{ex:1} and \ref{ex:2}. The recovery rates from iterative hard- and soft-thresholding are plotted in Figure \ref{sub:2} and show a phase transition. For soft-thresholding, this transition depends on both, the sparsity level $k$ and the measurement magnitude. Those observations are consistent with the theoretical results in  Theorem \ref{th:very last} und suggest that the original signal can be recovered by iterative soft-thresholding. We also use hard-thresholding but for comparable parameter choices the signal was only recovered when $k=1$, cf.~Fig.~\ref{sub:1}.
%Soft-thresholding allows larger sparsity parameters $k$ with decent recovery rates. 
\begin{figure}
\centering
\subfigure[iterative soft-thresholding]{
\frame{\includegraphics[width=.3\textwidth]{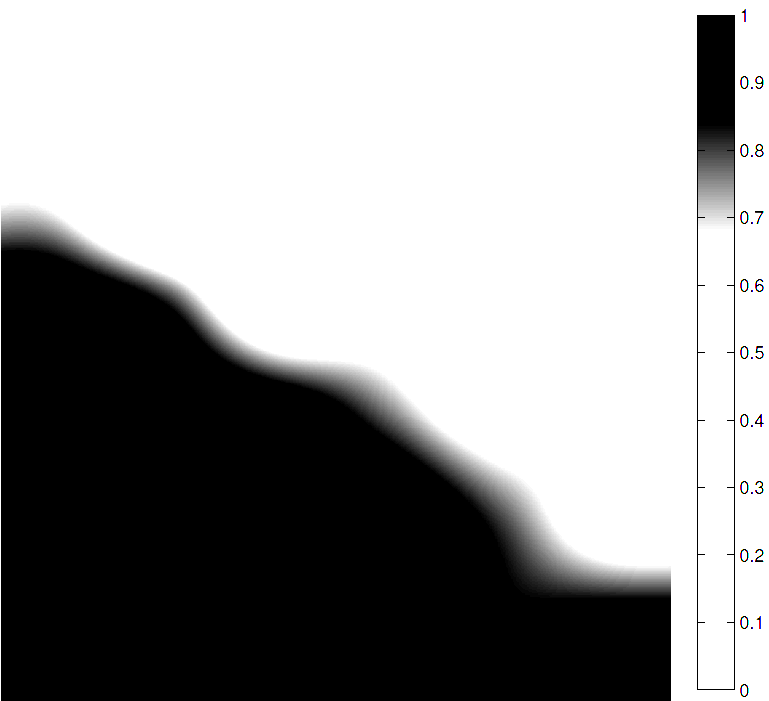}\label{sub:2}}}
\subfigure[iterative hard-thresholding]{
\frame{\includegraphics[width=.3\textwidth]{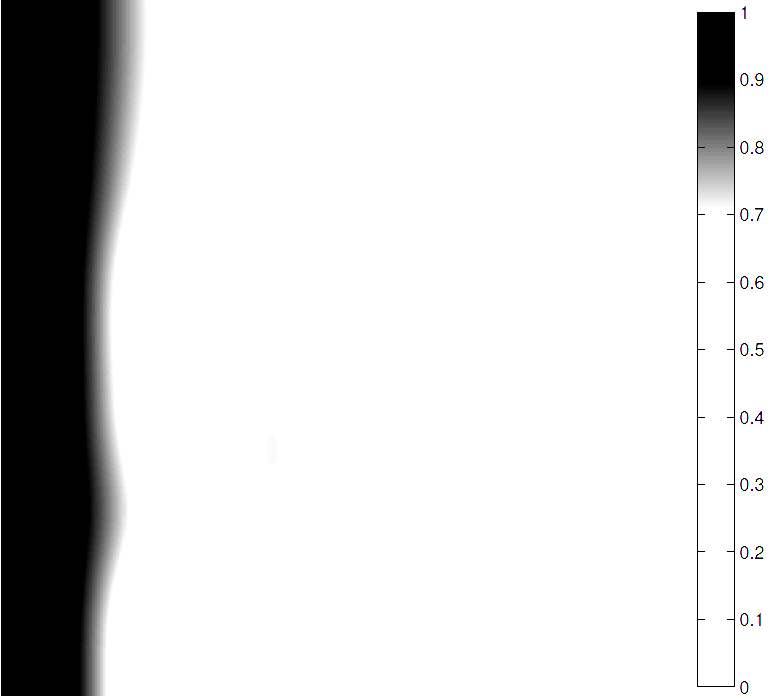}}\label{sub:1}}
\caption{Recovery rates for iterative hard- and soft-thresholding used with the measurements in the Examples \ref{ex:1} and \ref{ex:2} with $d=80$, $n=20$, $A_1$ having i.i.d.~Gaussian entries, $\epsilon=1$, and $A_2=I$. The sparsity parameter $k$ runs on the horizontal axis from $1$ to $10$, the norm of $\hat{x}$ runs on the vertical axis from $0.01$ to $1$. As expected, the recovery rates decrease with growing $k$.  Consistent with the theory, we also observe decreased recovery rates for larger signal norms with soft-thresholding. Hard-thresholding appears only successful for these parameters when $k=1$, but throughout the entire range of considered signal norms. 
}\label{fig:hard soft}
\end{figure}

The assumptions of the Theorems \ref{th:blumi} and \ref{th:very last} cannot be satisfied within the phase retrieval setting, and the initial vectors $x^{(0)}=x^{(0)}_\alpha=0$ in \eqref{eq:iteration} and in Algorithm \ref{algo:2}, respectively, would lead to a sequence of zero vectors. We observed numerically, that other choices of initial vectors do not yield {  acceptable} recovery rates either, so that we did not pursue this direction. 

  \section*{Acknowledgments} 
{  Martin Ehler~acknowledges the financial support by the Vienna Science and Technology Fund (WWTF) through project VRG12-009 and the Research Career Transition Awards Program EH 405/1-1/575910 of the National Institutes of Health and the German Science Foundation. Massimo Fornasier~is supported by the ERC-Starting Grant for the project ``High-Dimensional Sparse Optimal Control''. Juliane Sigl acknowledges the partial financial support of the START-Project ``Sparse Approximation and Optimization in High-Dimensions'' and the hospitality of the Johann Radon Institute for Computational and Applied Mathematics, Austrian Academy of Sciences, Linz, during the early preparation of this work.}

%\bibliographystyle{siam}
%\bibliography{../biblio_ehler2} 

\end{document}